\DeclareMathAlphabet{\mathpzc}{OT1}{pzc}{m}{it}
\newcommand{\R}{\mathbb{R}}
\newcommand{\C}{\mathbb{C}}
\newcommand\Z{\mathbb{Z}}
\newcommand{\N}{\mathbb{N}}
\newcommand{\Q}{\mathbb{Q}}
\renewcommand{\S}{\mathbb{S}}
\newcommand{\Lb}{\mathbf{L}}
\newcommand{\Pb}{\mathbb{P}}
\newcommand{\Tbb}{\mathbb{T}}
\newcommand{\Ub}{\mathbf{U}}
\newcommand{\pp}{\mathbf{p}}
\newcommand{\xx}{\mathbf{x}}
\newcommand{\Ccal}{\mathcal{C}}
\newcommand{\Fcal}{\mathcal{F}}
\newcommand{\Kcal}{\mathcal{K}}
\newcommand{\Lcal}{\mathcal{L}}
\newcommand{\Mcal}{\mathcal{M}}
\newcommand{\Ocal}{\mathcal{O}}
\newcommand{\Tcal}{\mathcal{T}}
\newcommand{\Ucal}{\mathcal{U}}
\newcommand{\QQ}{\mathscr{Q}}
\newcommand{\TT}{\mathscr{T}}
\newcommand{\id}{\mathrm{id}}
\newcommand{\SL}{{\rm SL}}
\newcommand{\GL}{{\rm GL}}
\newcommand{\inter}{\mathrm{int}}
\newcommand{\vol}{{\rm vol}}
\newcommand{\proj}{\mathrm{pr}}
\newcommand{\Ur}{\mathrm{U}}
\newcommand{\ST}{\mathcal{ST}}
\newcommand{\pr}{\mathrm{pr}}
\newcommand{\hg}{\hat{g}}
\newcommand{\hn}{\hat{n}}
\newcommand{\hx}{\hat{x}}
\newcommand{\hX}{\hat{X}}
\newcommand{\hZ}{\hat{Z}}
\newcommand{\homg}{\hat{\omega}}
\newcommand{\hf}{\hat{f}}
\newcommand{\hk}{\hat{k}}
\newcommand{\hxx}{\hat{\mathbf{x}}}
\newcommand{\Sig}{\Sigma}
\newcommand{\G}{\Gamma}
\newcommand{\omg}{\omega}
\newcommand{\ol}{\overline}
\newcommand{\ul}{\underline}
\newcommand{\lra}{\longrightarrow}
\newcommand{\ra}{\rightarrow}
\newcommand{\cH}{\mathcal{H}}
\newcommand{\cM}{\mathcal{M}}
\newcommand{\cS}{\mathcal{S}}
\newcommand{\cU}{\mathcal{U}}
\newcommand{\HH}{\mathrm{H}}
\newcommand{\sbt}{\bullet}
\newcommand{\cW}{\mathcal{W}}
\newcommand{\cL}{\mathcal{L}}
\newcommand{\cV}{\mathcal{V}}
\newcommand{\Skdiff}{\Omega^k\mathcal{M}_{g}(\ul{k})}
\newcommand{\Skdiffi}{\Omega^k_1\mathcal{M}_{g}(\ul{k})}
\newcommand{\Sabel}{\Omega\Mcal_{\hat{g}}(\ul{\hat{k}})}
\newcommand{\pSkdiff}{\Pb\Omega^k\mathcal{M}_{g}(\ul{k})}
\newcommand{\Tkdiff}{\Omega^k\Mcal^\vartriangle_g(\ul{k})}
\newcommand{\Qkdiff}{\Omega^k\Mcal^\square_g(\ul{k})}
\newcommand{\Tkdiffm}{\Omega^k\Mcal^\vartriangle_g(\ul{k},m)}
\newcommand{\Qkdiffm}{\Omega^k\Mcal^\square_g(\ul{k},m)}
\newcommand{\trate}{\Omega\Mcal_{g}(\ul{k})}
\newcommand{\htrate}{\Omega\Mcal_{g}}
\newtheorem{Theorem}{Theorem}[section]
\newtheorem{Lemma}[Theorem]{Lemma}
\newtheorem{Proposition}[Theorem]{Proposition}
\newtheorem{Definition}[Theorem]{Definition}
\newtheorem{Claim}[Theorem]{Claim}
\theoremstyle{remark}
\newtheorem{Remark}[Theorem]{Remark}
\begin{document}
\title[VHS and enumerating tilings]{Variation of Hodge structure and enumerating tilings of surfaces by triangles and squares}


\author{Vincent Koziarz}
\address{Univ. Bordeaux, IMB, CNRS, UMR 5251, F-33400 Talence, France}
\email[V.~Koziarz]{vincent.koziarz@math.u-bordeaux.fr}

\author{Duc-Manh Nguyen}
\address{Univ. Bordeaux, IMB, CNRS, UMR 5251, F-33400 Talence, France}
\email[D.-M.~Nguyen]{duc-manh.nguyen@math.u-bordeaux.fr}

\date{\today}

\begin{abstract}
Let $S$ be a connected closed oriented surface of genus $g$.
Given a triangulation (resp. quadrangulation) of $S$, define the index of each of its vertices to be the number of edges originating from this vertex minus $6$ (resp. minus $4$). Call the set of integers recording the non-zero indices the profile of the triangulation (resp. quadrangulation).
If $\kappa$ is a profile for triangulations (resp. quadrangulations) of $S$, for any $m\in \Z_{>0}$, denote by $\TT(\kappa,m)$ (resp. $\QQ(\kappa,m)$) the set of (equivalence classes of) triangulations (resp. quadrangulations) with profile $\kappa$ which contain at most $m$ triangles (resp. squares).
In this paper, we will show that if $\kappa$ is a profile for triangulations (resp. for quadrangulations) of $S$ such that none of the indices in $\kappa$ is divisible by $6$ (resp. by $4$), then $\TT(\kappa,m)\sim c_3(\kappa)m^{2g+|\kappa|-2}$ (resp. $\QQ(\kappa,m)\sim c_4(\kappa)m^{2g+|\kappa|-2}$), where $c_3(\kappa) \in \Q\cdot(\sqrt{3}\pi)^{2g+|\kappa|-2}$ and $c_4(\kappa)\in \Q\cdot\pi^{2g+|\kappa|-2}$.
The key ingredient of the proof is a result of J.~Koll\'ar~\cite{Kollar87} on the link between the curvature of the Hogde metric on vector subbundles of a variation of Hodge structure over algebraic varieties, and Chern classes of their extensions.
By the same method, we also obtain the rationality (up to some power of $\pi$) of the Masur-Veech volume of arithmetic affine submanifolds of translation surfaces that are transverse to the kernel foliation.
\end{abstract}

\maketitle

%

\section{Introduction}\label{sec:intro}
\subsection{Triangulations and quadrangulations of surfaces}\label{sec:triang:quad}
Let $S$ be a connected closed oriented surface of genus $g \geq 0$. A {\em triangulation} (resp. {\em quadrangulation}) of $S$ is an embedded graph $\G$ on $S$ such that each component of the complement of $\G$ is homeomorphic to a disc and bounded by $3$ edges (resp. $4$ edges). A component of $S\setminus \G$ is called a {\em face} of the triangulation (resp. quadrangulation).  Note that an edge of $\G$ can appear twice in the boundary of the same face. Two triangulations (resp. quadrangulations) of $S$ are said to be {\em equivalent} if there is a homeomorphism of $S$ which restricts to an isomorphism  between the corresponding embedded graphs.


The valency $e_v$ of a vertex $v$ of $\G$  is the number of directed edges originating from $v$. Note that a loop at $v$ counts twice in $e_v$. If $\G$ is a triangulation, define the index of $v$ to be $\kappa(v):=e_v-6$. If $\G$ is a quadrangulation then the index of $v$ is $\kappa(v):=e_v-4$. The vertices whose index is not zero are said to be {\em singular}. By computing the Euler characteristic of $S$, one readily finds
\begin{equation}\label{eq:euler:char}
\sum_{v \, {\rm singular}}\kappa(v)= \left\{
\begin{array}{ll}
  12(g-1), & \hbox{if $\G$ is a triangulation}, \\
  8(g-1), & \hbox{if $\G$ is a quadrangulation}.
\end{array}
\right.
\end{equation}
We will call the sequence of numbers $\bigl(\kappa(v), \, v \textrm{ singular vertex}\bigr)$ the profile of $\G$.
Let  $\kappa=(\kappa_1,\dots,\kappa_n)$ be a sequence of integers.
We say that $\kappa$ is an {\em admissible profile for triangulations}  of $S$ if
\begin{itemize}
 \item[$\bullet$] $\kappa_i>-6$ and $\kappa_i\neq 0$, for all $i=1,\dots,n$,
  \item[$\bullet$] $\kappa_1+\dots+\kappa_n=12(g-1)$.
\end{itemize}
Similarly,  we will say that $\kappa$ is {\em an admissible profile for quadrangulations}  of $S$ if
\begin{itemize}
 \item[$\bullet$] $\kappa_i>-4$ and $\kappa_i\neq 0$, for all $i=1,\dots,n$,
  \item[$\bullet$] $\kappa_1+\dots+\kappa_n=8(g-1)$.
\end{itemize}


\medskip

Given an admissible profile $\kappa$ for triangulations of $S$, for any $m\in \Z_{>0}$, we denote by $\TT(\kappa,m)$ the set of equivalence classes of triangulations of $S$ with profile $\kappa$ and  number of faces at most $m$. In the same manner, if $\kappa$ is an admissible profile for quadrangulations of $S$, we denote by $\QQ(\kappa,m)$ the set of equivalence classes of quadrangulations of $S$ with profile $\kappa$ and number of faces at most $m$.
In this paper, we will show
\begin{Theorem}\label{th:asymp:count}\hfill
\begin{itemize}
\item[(i)] Let $\kappa=(\kappa_1,\dots,\kappa_n)$ be an admissible profile for triangulations of $S$. If $(\kappa_1,\dots,\kappa_n)$ satisfies   $\kappa_i\not\in 6\cdot\Z$ for all $i=1,\dots,n$,  then we have
    \begin{equation}\label{eq:asymp:triang}
    \lim_{m \ra \infty} \frac{\#\TT(\kappa,m)}{m^{2g+n-2}}=c_3(\kappa)
    \end{equation}
    where $c_3(\kappa)$ is a constant  in $\Q\cdot(\sqrt{3}\pi)^{2g+n-2}$.

\item[(ii)] Let $\kappa=(\kappa_1,\dots,\kappa_n)$ be an admissible profile for quadrangulations of $S$. If $(\kappa_1,\dots,\kappa_n)$ satisfies  $\kappa_i\not\in 4\cdot\Z$  for all $i=1,\dots,n$, then we have
    \begin{equation}\label{eq:asymp:quad}
    \lim_{m \ra \infty} \frac{\#\QQ(\kappa,m)}{m^{2g+n-2}}=c_4(\kappa)
    \end{equation}
    where $c_4(\kappa)$ is a constant in $\Q\cdot\pi^{2g+n-2}$.
\end{itemize}
\end{Theorem}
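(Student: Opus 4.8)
The plan is to realise every tiling as an ``integral point'' of a stratum of $k$-differentials, count these points by a volume asymptotic, and then evaluate the volume by recognising it — via Koll\'ar's theorem — as a Hodge-theoretic intersection number, which is rational.

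\emph{Step 1: tilings as integral $k$-differentials.} A triangulation of $S$ by unit equilateral triangles (resp. a quadrangulation by unit squares) equips $S$ with a flat metric with conical singularities; at a vertex of index $\kappa_i$ the cone angle is $2\pi(\kappa_i+6)/6$ (resp. $2\pi(\kappa_i+4)/4$), so the linear holonomy lies in $\Z/6$ (resp. $\Z/4$) and the developing map takes vertices to the Eisenstein lattice $\Z[\zeta_3]=\Z[\zeta_6]$ (resp. the Gaussian lattice $\Z[i]=\Z[\zeta_4]$). Analytically this is a meromorphic $k$-differential $q$, $k=6$ (resp. $k=4$), on a Riemann surface with a zero or pole of order $\kappa_i$ at the $i$-th marked point, whose relative periods — equivalently the relative periods of $\omega=\sqrt[k]{q}$ on the $\Z/k$-cyclic cover $\hat C\to C$ over which $\sqrt[k]{q}$ becomes an abelian differential lying in the $\zeta_k$-eigenspace — belong to $\Z[\zeta_3]$ (resp. $\Z[i]$), and the number of faces equals the flat area times a fixed rational multiple of $\sqrt3$ (resp. a fixed rational). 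The hypothesis $\kappa_i\notin k\Z$ is used twice: it forces each marked point to be a genuine cone point, so the eigenspace decomposition of $H^1(\hat C)$ is nondegenerate and $\Omega^k\Mcal_g(\kappa)$ carries a polarised weight-one sub-variation of Hodge structure $\mathbb V$ whose $(1,0)$-part contains the tautological line; and a Chevalley--Weil computation then gives $\dim_{\C}\Omega^k\Mcal_g(\kappa)=2g+n-2$, the exponent in the theorem (if some $\kappa_i\in k\Z$ this dimension, hence the exponent, would drop). Conversely every such integral point arises from a tiling — connect the lattice dots à la Thurston — uniquely up to the finite automorphism groups absorbed in the moduli identification.

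\emph{Step 2: lattice-point count.} The flat area is a positive-definite quadratic form in the period coordinates, so ``area $\le A$'' is the $\sqrt A$-dilate of ``area $\le1$''; hence, provided the stratum has finite volume with negligible thin part (Masur--Veech-type properties of strata of $k$-differentials), the number of integral points with area $\le A$ is asymptotic to $A^{2g+n-2}$ times the volume of the ``area $\le1$'' locus for the Lebesgue measure attached to the period lattice. Converting ``at most $m$ faces'' into ``area $\le m\cdot(\mathrm{const})$'' shows that $\#\TT(\kappa,m)/m^{2g+n-2}$ and $\#\QQ(\kappa,m)/m^{2g+n-2}$ converge, to a rational multiple of $(\sqrt3)^{2g+n-2}$ (resp. of $1$) times that volume; the $\sqrt3$ is precisely the discriminant factor of the Eisenstein field $\Q(\sqrt{-3})$ (the covolume $\sqrt3/2$ of $\Z[\zeta_3]$), while $\Q(\sqrt{-1})$ has square discriminant. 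This step uses only a standard lattice-point count in a dilating star-shaped region plus finiteness and thin-part estimates; it is routine relative to what follows.

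\emph{Step 3 (the crux): the volume is rational up to a power of $\pi$.} The ``area $\le1$'' locus is the unit disk bundle of the tautological line bundle $\Ocal(-1)$ over $\Pb\Omega^k\Mcal_g(\kappa)$ with its Hodge metric $h$ (the square $h$-norm of $\omega$ being the flat area), so the volume equals a rational multiple of $\pi^{2g+n-2}\cdot\int_{\Pb\Omega^k\Mcal_g(\kappa)}c_1(\Ocal(1),h)^{2g+n-3}$. Now $\Ocal(-1)$ sits inside the top piece $F^{1}=\mathcal V^{1,0}$ of the Hodge filtration of $\mathbb V$, so by Griffiths' curvature-decreasing property for Hodge sub-bundles the curvature of $h$ on $\Ocal(-1)$ is negative semi-definite and $c_1(\Ocal(1),h)$ is a closed positive $(1,1)$-current. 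Passing to a smooth projective compactification with normal-crossing boundary on which the monodromy of $\mathbb V$ is unipotent — a level cover of the multi-scale compactification, the degree of which alters the answer only by a rational factor — Koll\'ar's theorem applies: the Hodge metric on $\Ocal(-1)$ is good in Mumford's sense, $c_1(\Ocal(1),h)$ represents the first Chern class of the canonical (Deligne) extension $\overline{\Ocal(1)}$, and its top powers compute the intersection numbers of $\overline{\Ocal(1)}$; hence the integral is finite and equals $\overline{\Ocal(1)}^{\,2g+n-3}\in\Q$. Combining with Step 2 yields $c_3(\kappa)\in\Q\cdot(\sqrt3\pi)^{2g+n-2}$ and $c_4(\kappa)\in\Q\cdot\pi^{2g+n-2}$.

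\emph{Main difficulty and the variant.} The real work is Step 3: identifying $\Omega^k\Mcal_g(\kappa)$ with (a component of) the base of a polarised variation of Hodge structure of which $\Ocal(-1)$ is a Hodge sub-line-bundle; producing a projective compactification with unipotent, normal-crossing degeneration so that Koll\'ar's comparison between Hodge-metric curvature and the Deligne extension is legitimate; and checking that no mass of $c_1(\Ocal(1),h)^{2g+n-3}$ escapes to the boundary, so that the open-stratum integral is genuinely the closed intersection number. The bookkeeping of Steps 1--2 — finite automorphism and $\Z/k$ factors, the exact constants relating faces, flat area, Hodge volume and the $c_1$-integral, and the thin-part estimates — is routine by comparison. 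Finally the same scheme, with $\Ocal(-1)$ replaced by the appropriate Hodge sub-bundle of the restriction of the universal variation of Hodge structure, gives rationality up to powers of $\pi$ of the Masur--Veech volume of arithmetic affine submanifolds transverse to the kernel foliation.
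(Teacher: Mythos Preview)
Your overall strategy matches the paper's: tilings $\leadsto$ integral points in strata of $k$-differentials $\leadsto$ volume via lattice-point asymptotics $\leadsto$ volume $=$ top self-intersection of the tautological line via Koll\'ar. Two points deserve correction.

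\textbf{The positivity claim is false and unnecessary.} You assert that ``by Griffiths' curvature-decreasing property the curvature of $h$ on $\Ocal(-1)$ is negative semi-definite and $c_1(\Ocal(1),h)$ is a positive current.'' This is not so: the explicit calculation (Lemma~\ref{lm:vol:form:chern:curv}) shows that if $h_1$ has signature $(p,q)$ on the tangent space $V$, then $\imath\Theta$ has signature $(q,p-1)$ at every point, hence is genuinely indefinite as soon as $p\geq 2$. The confusion is that Griffiths' sign statements concern $\det F^1$ (or the graded pieces $\mathcal H^{p,q}$), not an arbitrary line sub-bundle of $F^1$; the second fundamental form of $\cL$ inside $F^1$ contributes with the ``wrong'' sign. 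Fortunately nothing in the argument needs positivity: Koll\'ar's theorem (applied to a sub-line-bundle of the canonical extension of a VHS with unipotent monodromy) already asserts that the Chern--Weil form represents the Chern class of the extended bundle as a current, so the open integral equals the closed intersection number with no mass at infinity. You should simply drop the positivity remark.

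\textbf{Minor differences.} The paper partitions $\TT(\kappa)$ over the actual holonomy order $k\in\{2,3,6\}$ and treats each primitive stratum $\Skdiff$ separately; you work with $k=6$ throughout. Either is fine once one checks that the non-primitive loci (and orbifold loci) are negligible for the count, but you should make this explicit. Also, you invoke the multi-scale compactification; the paper deliberately avoids it, taking instead an \emph{arbitrary} smooth normal-crossing compactification, passing to a finite cover to kill the quasi-unipotent part of the monodromy, and then blowing up along the closure of the tautological section in $\Pb\ol{\cH}^1_\C$ so that $\cL$ genuinely extends as a sub-line-bundle of the Deligne extension. Your route is legitimate (and is noted in \S\ref{subsec:outline} as an alternative), but it imports a heavier construction than the proof actually needs.
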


\begin{Remark}\label{rk:limit:known}\hfill
\begin{itemize}
\item 
In \cite{Thurston98}, Thurston studied triangulations of the sphere where the valency of every vertex is at most $6$.
He relates the asymptotics of the number of such triangulations with the volume of the moduli space of pointed genus zero curves with respect to some complex hyperbolic metric. Those volumes have been computed by different methods in \cite{McMullen17} and \cite{KN18}.
The problem of enumerating tilings of surfaces by triangles and squares has also been addressed in \cite{ES18,Engel-I,Engel-II}.

\item The existence of the limits in \eqref{eq:asymp:triang} and \eqref{eq:asymp:quad} is a consequence of the main theorem of \cite{Ngu19}. In \cite{Engel-I}, Engel shows that the limits in \eqref{eq:asymp:triang} and  \eqref{eq:asymp:quad}, if finite, must belong to the ring $K[\pi]$, where $K$ is either $\Q$ or $\Q(\sqrt{3})$ ($K=\Q$ for quadrangulations). The main content of Theorem~\ref{th:asymp:count} is that these constants belong to $\Q\pi^{2g+n-2}$ or to $\Q(\sqrt{3}\pi)^{2g+n-2}$ in the case $\kappa$ satisfies the hypothesis of  (i) and (ii).
%
%
\end{itemize}
\end{Remark}

\subsection{Enumerating square-tiled surfaces in affine invariant submanifolds}\label{sec:enum:sq:tiled}
Translation surfaces are pairs $(X,\omega)$ where $X$ is a compact Riemann surface and $\omega$ a non-zero holomorphic $1$-form on $X$. The $1$-form $\omg$ defines a flat metric with conical singularities at its zeros.
A square-tiled surface is a pair $(X,\omega)$ where $\omega$ is the pull-back of the $1$-form $dz$ on the standard torus $\Tbb=\C/(\Z\oplus\Z\imath)$ via a ramified cover $f: X\ra \Tbb$, which is branched over a unique point.


The space of translation surfaces of genus $g\geq 2$ is naturally stratified by the orders of the zeros of $\omega$. Given an $n$-tuple of positive integers $\ul{k}=(k_1,\dots,k_n)$ such that $k_1+\dots+k_n=2g-2$, denote by $\trate$ the set of translation surfaces $(X,\omega)$ such that $\omega$ has exactly $n$ zeros with orders  given by $(k_1,\dots,k_n)$.
It is well known  that $\trate$ is a complex orbifold of dimension $2g+n-1$. For any $(X,\omega)\in \trate$, a neighborhood of $(X,\omega)$ can be identified with an open subset of $H^1(X,\{\textrm{zeros of} \; \omega\}; \C)$ by local charts called {\em period mappings}.

There is an action of $\GL^+(2,\R)$  on $\trate$ defined as follows: let $(z_1,\dots,z_d)$ be some local coordinates of $\trate$ given by a period mapping, and $A$ a matrix in  $\GL^+(2,\R)$. Then the action of $A$ is given by $A: (z_1,\dots,z_d) \mapsto (A(z_1),\dots,A(z_d))$, where  $A$ acts on $\C$ via the standard identification $\C\simeq \R^2$. The dynamics of this action of $\GL^+(2,\R)$ has deep connections with various domains such as billiards in rational polygons, interval exchange transformations, Teichm\"uller dynamics in moduli space (see for instance \cite{MaTa02, Zorich:survey}).

The properties of the $\GL^+(2,\R)$ action, in particular the structure of the orbit closures, are the subject of a fast growing literature in the last few decades. It follows from the groundbreaking results of Eskin-Mirzakhani \cite{EM18} and Eskin-Mirzakhani-Mohammadi~\cite{EMM15} that  any $\GL^+(2,\R)$-orbit closure is  an  immersed suborbifold of $\trate$, which is locally defined by linear equations with real coefficients in local charts by period mappings. Such suborbifolds are commonly known as {\em invariant affine submanifolds} (or {\em affine submanifolds} for short) of $\trate$.

An affine submanifold $\cM$ is said to be {\em arithmetic} if it is locally defined by linear equations with coefficients in $\Q$. It is shown in \cite{Wright14} that $\Mcal$ is arithmetic if and only if it contains a square-tiled surface.
Our second main result concerns the enumerating of square-tiled surfaces in arithmetic affine submanifolds.
Before giving  the statement of the second theorem, let us recall some relevant features of affine submanifolds.
Each stratum of translation surfaces carries naturally two local systems $\cH^1_{\rm rel}$ and $\cH^1$ whose fibers over $(X,\omega)$ are respectively $H^1(X,\{\textrm{zeros of } \omega\};\C)$ and $H^1(X,\C)$. Let $\pp: H^1(X,\{\textrm{zeros of } \omega\};\C) \ra H^1(X,\C)$ denote the natural projection. Then $\pp$ gives rise to a morphism $\pp: \cH^1_{\rm rel} \ra \cH^1$ of local systems over $\trate$.
By definition, the tangent bundle $T\cM$ of an affine submanifold $\cM$ is a local subsystem of $\cH^1_{\rm rel}$ over $\cM$.
Moreover, we have a foliation of $\cM$, called the {\em kernel foliation}, whose tangent space at every point is identified with $\ker(\pp)$.
We will say that $\Mcal$ is {\em absolutely rigid} if the restriction of $\pp$ to $T\Mcal$ is injective.
Equivalently, $\Mcal$ is absolutely rigid if it is transverse to the {\em kernel foliation} of $\trate$.
Examples of such affine submanifolds include strata of translation surfaces having a single singularity (minimal strata), double covers of quadratic differentials which only have zeros of odd order, and closed orbits generated by square-tiled surfaces (this list is not exhaustive).

\begin{Theorem}\label{th:aff:subman:count}
Let $\Mcal$ be an {\bf arithmetic absolutely rigid} affine submanifold of $\trate$ of dimension $d$. For $m\in \Z_{>0}$, let $\ST(\Mcal,m)$ denote the set of square-tiled surfaces in $\Mcal$ that are formed by at most $m$ unit squares. We then have
\begin{equation}\label{eq:asymp:sq:tiled}
\lim_{m \ra \infty}\frac{\#\ST(\Mcal,m)}{m^{d}}=c(\Mcal)
\end{equation}
where $c(\Mcal) \in \Q\pi^{d}$.
\end{Theorem}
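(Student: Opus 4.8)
The plan is to interpret the counting function $\#\ST(\Mcal,m)$ as a Riemann-sum approximation of the Masur–Veech volume of $\Mcal$ and then to prove that this volume lies in $\Q\pi^{d}$ by the Hodge-theoretic argument that is the technical heart of the paper. First I would recall from the work of Eskin–Okounkov and its refinements (and, in the form needed here, from \cite{Ngu19}) that square-tiled surfaces with at most $m$ squares lying in $\Mcal$ are exactly the integer points (with respect to the natural integral structure on $T\Mcal$ coming from $H^1(X,\{\textrm{zeros}\};\Z)$) of the cone over $\Mcal$ cut out by the area-at-most-$m$ condition; since $\Mcal$ is arithmetic this integral structure is well defined and $\GL^+(2,\Q)$-rational. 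Standard lattice-point asymptotics then give
\begin{equation}\label{eq:riemann}
\lim_{m\ra\infty}\frac{\#\ST(\Mcal,m)}{m^{d}}=\frac{\vol_{\rm MV}(\Mcal)}{?}\cdot(\textrm{explicit rational factor}),
\end{equation}
so that the existence of the limit (already known) is equivalent to finiteness of $\vol_{\rm MV}(\Mcal)$, and the arithmetic nature of $c(\Mcal)$ is equivalent to $\vol_{\rm MV}(\Mcal)\in\Q\pi^{d}$. The role of the hypothesis \textbf{absolutely rigid} is precisely that $\pp|_{T\Mcal}$ is injective, so $T\Mcal$ is identified with a sub-local-system of the \emph{absolute} cohomology local system $\cH^1$, which underlies a polarized variation of Hodge structure of weight one; without rigidity one only has a subsystem of $\cH^1_{\rm rel}$, which carries no polarization and the Hodge-theoretic input does not apply.

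Next I would express $\vol_{\rm MV}(\Mcal)$ as (a rational multiple of $\pi^{d}$ times) the self-intersection number of a Chern class. Concretely: $\Mcal$ projects to a quasi-projective variety (a finite cover of a subvariety of the stratum, after passing to a suitable level structure to kill orbifold issues), and the natural volume form on $\Mcal$ is, up to an explicit rational constant and a power of $\pi$ coming from normalizing the flat torus $\Tbb$, the top exterior power of the Hodge metric on the vector bundle $T\Mcal\hookrightarrow\cH^1$. The key step is then to invoke Koll\'ar's theorem \cite{Kollar87}: for a sub-VHS of a polarized VHS over a quasi-projective base $U$ with a good compactification $\ol U$, the Deligne (canonical) extension of the relevant Hodge bundle has a Chern form represented by the curvature of the Hodge metric, and the corresponding integral over $U$ equals a \emph{rational} number (a Chern number of the extension over $\ol U$). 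Applying this to the line bundle $\det(T\Mcal)$, or rather to the appropriate Hodge-bundle quotient whose top Chern form is the volume form, I would conclude
\begin{equation}\label{eq:chern}
\int_{\Mcal}\Bigl(\tfrac{1}{\pi}\Bigr)^{d}\,\omega_{\rm MV}=\langle c_1(\overline{\det T\Mcal})^{?},[\ol\Mcal]\rangle\in\Q,
\end{equation}
which, combined with \eqref{eq:riemann}, gives $c(\Mcal)\in\Q\pi^{d}$. To make \eqref{eq:chern} precise one needs: (a) that the Hodge metric on $T\Mcal$ has at worst logarithmic singularities along the boundary divisor of $\ol\Mcal$, which is exactly Koll\'ar's setting once we know $T\Mcal$ extends to a sub-VHS of a polarized one (this is where absolute rigidity is used again); (b) an identification of $\omega_{\rm MV}$, up to a rational scalar and the expected power of $\pi$, with the top Hodge-curvature form — this is a local computation in period coordinates, using that on an absolutely rigid $\Mcal$ the period coordinates are literally linear coordinates on a piece of $H^1(X,\C)$ and the Hodge form is the constant-coefficient symplectic volume.

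The main obstacle I anticipate is step (a)–(b) taken together: one must produce a genuine smooth projective (or at least log-smooth) compactification $\ol\Mcal$ to which both the sub-VHS structure on $T\Mcal$ and the Masur–Veech form extend with controlled (logarithmic) singularities, and one must track exactly which power of $\pi$ and which rational normalization constants enter when passing between the combinatorial count of square-tiled surfaces, the Masur–Veech measure, and Koll\'ar's Chern-number formula. The arithmeticity hypothesis is needed to guarantee that the integral structure defining ``lattice points'' is the same rational structure that appears in the compactification and in Koll\'ar's theorem, so that no irrational constant sneaks in; the absolute-rigidity hypothesis is what upgrades $T\Mcal$ from a subsystem of $\cH^1_{\rm rel}$ (un-polarized) to a sub-VHS of $\cH^1$ (polarized), making \cite{Kollar87} applicable. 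Once these compatibilities are pinned down, the conclusion $c(\Mcal)\in\Q\pi^{d}$ follows formally, and the same argument run for the translation surfaces underlying triangulations (resp. quadrangulations) — where the relevant covers contribute the extra $\sqrt 3$ (resp. nothing) — yields Theorem~\ref{th:asymp:count} as a special case.
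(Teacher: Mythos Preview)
Your overall strategy matches the paper's: reduce the lattice-point count to a volume (Lemma~\ref{lm:MV:vol:count}), compare the Masur--Veech volume form $d\vol^*$ with the intrinsic form $d\vol$ defined via $h_1$ and show their ratio is rational using arithmeticity (Proposition~\ref{prop:compare:meas:arith}), then realize the remaining volume as a Chern number via Koll\'ar's theorem (Theorem~\ref{th:vol:rational}). Two points, however, need to be sharpened before this becomes a proof.

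First, the line bundle to which Koll\'ar is applied is \emph{not} $\det(T\cM)$ nor a Hodge-bundle quotient, but the \emph{tautological line bundle} $\cL$ over $\Pb\hat\cM$: the sub-line-bundle of the Hodge bundle whose fiber at $(C,[\omega])$ is $\C\omega\subset H^{1,0}(C)$, with its Hodge metric $\|\omega\|^2=\frac{\imath}{2}\int_C\omega\wedge\bar\omega$. The local computation (Lemma~\ref{lm:vol:form:chern:curv}) gives exactly
\[
d\mu=\frac{(-1)^{p-1}\,2\pi}{2^{d}\,d!}\,(\imath\Theta)^{d-1}
\]
where $\Theta$ is the curvature of $(\cL,\|\cdot\|)$ and $(p,d-p)$ is the signature of $h_1|_{V}$. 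This fills the question marks in your display~\eqref{eq:chern} and explains why the answer lands in $\Q\pi^d$ (one $\pi$ from the pushforward, $(2\pi)^{d-1}$ from $c_1=\frac{\imath}{2\pi}\Theta$).

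Second, absolute rigidity alone does not imply that $\cV_1\simeq T\cM$ is a sub-\emph{VHS} of $\cH^1_\C$; a sub-local-system of a polarized VHS need not respect the Hodge filtration, and without that $h_1|_{\cV_1}$ could be degenerate and Koll\'ar's theorem would not apply. The paper obtains polarization by invoking \cite{AEM17}, which is specific to $\GL^+(2,\R)$-invariant submanifolds; your claim that rigidity ``upgrades'' the subsystem to a sub-VHS is not correct as stated.

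Finally, for the extension step the paper does \emph{not} build a tailored compactification of $\Pb\cM$. It takes any smooth compactification with normal crossing boundary, passes to a finite cover so that the monodromy of $\cH^1$ becomes unipotent, and then performs a modification: one takes the closure of the section $(C,[\omega])\mapsto[\omega]$ inside the projectivized Deligne extension $\Pb\ol{\cH}^1_\C$ and desingularizes (Claim~\ref{cl:vol:Chern:cls}). This forces $\cL$ to extend as a sub-line-bundle of $\ol{\cH}^1_\C$, which is precisely the hypothesis Koll\'ar requires. Your anticipated obstacle (a) is thus resolved without any analysis of the geometry of $\partial\ol\cM$.
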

The limit on the left hand side of \eqref{eq:asymp:sq:tiled} is often referred to  as the Masur-Veech volume of $\cM$.
The rationality of this volume has been known for several classes of arithmetic absolutely rigid affine submanifolds.
If $\dim \cM=2$, then $\cM$ is the $\GL^+(2,\R)$-orbit of a square-tiled surface. In this case, it is a well known fact that the projection of $\cM$ in the moduli space of Riemann surfaces is a Teichm\"uller curve (see for instance \cite{SW04}). Up to a universal constant in $\Q\cdot\pi^2$, the Masur-Veech volume of $\cM$ is equal to the Euler characteristic of this Teichm\"uller curve.
In the case $\cM$ is a minimal stratum (which consists of Abelian differentials with  a single zero), the rationality of the Masur-Veech volume was proven in the work~\cite{EO01} (see also \cite{Sau18} for related formulas). In the case $\cM$ consists of double covers of quadratic differentials with odd order zeros and simple poles, this rationality  was shown in \cite{AEZ16} for genus zero case, and in \cite{CMS19} for the general case (see also \cite{EO06,Goujard}).
An unexpected arithmetic absolutely rigid affine submanifold of dimension 4 in genus four was discovered in \cite{MMW17}.
Its Masur-Veech volume was computed in \cite{TT19}.
However, to the authors' knowledge, for general  arithmetic absolutely rigid affine submanifolds, the rationality of the Masur-Veech volume has not been known.

%
%

\subsection{Outline  and remarks on the proof of the main theorems}\label{subsec:outline}
The proofs of Theorem~\ref{th:asymp:count} and Theorem~\ref{th:aff:subman:count} go as follows: we first relate the asymptotics we are interested in to the Masur-Veech volumes of some moduli spaces of projectivized pluri\-differentials on Riemann surfaces. The moduli spaces under consideration belong to a special class of subvarieties, which will be called {\em linear submanifolds}, of the projectivizations of strata of Abelian differentials (cf. Definition~\ref{def:lin:man}). By construction, those moduli spaces carry a tautological line bundle, which comes equipped with a natural Hermitian metric.

Under some appropriate hypothesis, that is the linear submanifolds are supposed to be polarized and absolutely rigid (cf. \textsection\ref{subsec:linsubrev}), we then show that up to a rational constant, the Masur-Veech volume form is pointwise equal to some power of the curvature form of the natural metric on the tautological line bundle.
To show the rationality of the Masur-Veech volumes (up to multiplication by some power of $\pi$), instead of constructing specific compactifications for the linear submanifolds,  we will make use of the variation of (mixed) Hodge structure over those submanifolds.

To fix ideas, let us denote by $\cM$ a linear submanifold of a stratum of Abelian differentials, and by $\Pb\cM$ its projectivization.
By definition, there is a variation of $\Z$-mixed Hodge structure over $\Pb\cM$.
The tautological line bundle is actually a holomorphic line subbundle of the vector bundle associated with the $\Z$-local system of this variation of Hodge structure (VHS).
Up to taking some finite cover, and some modification of an arbitrary compactification of $\Pb\cM$ with normal crossing boundary, one can show that the tautological line bundle extends as a line subbundle of the canonical extension of the VHS.
Since the Hermitian metric on the tautological line bundle coincides with the Hodge metric of the VHS, it follows from a result of J. Koll\'ar \cite{Kollar87} that any power of the curvature of this metric is a representative in the sense of currents of the corresponding power of the first Chern class of the extended line bundle.  Since the Masur-Veech volume of $\Pb\cM$ is equal to the integral of the maximal power of this curvature form multiplied by a rational number, this allows us to conclude.

A few comments on the strategy of the proof are in order. First, the relation between the Masur-Veech volumes and the asymptotics of the counting problems has been known since the work~\cite{EO01}. Second, that the Masur-Veech volume form is proportional to the top power of the curvature of the tautological line bundle (on the associated projectivized moduli spaces) has been known to experts in the field.
For minimal strata of Abelian differentials and quadratic differentials with odd order zeros, their ratios were computed in \cite{Sau18, CMS19}.
However, to the authors' knowledge, for moduli spaces of $k$-differentials (where $k\in \{2,3,4,6\}$) this ratio has not been explicitly calculated  in the literature.
In this paper, we limit ourselves to showing that this ratio is a rational number in these cases (cf. Proposition~\ref{prop:compare:meas:arith} and Proposition~\ref{prop:ratio:vol:forms}).

Finally, for Theorem~\ref{th:asymp:count} an alternate method to show that the integral of the top power of the curvature form gives a rational number is to use the compactifications of the corresponding moduli spaces of $k$-differentials constructed in \cite{BCGGM:multiscale, CMZ19}. Indeed, the results in \cite{CMZ19} imply that the integral under consideration is equal to the integral of some power of the first Chern class of a line bundle over a compact complex orbifold, thus must be a rational number. In \cite{CMS19}, this method has been used to calculate the Masur-Veech volumes of some moduli spaces of quadratic differentials.
The main novelty of the current paper consists in the use of  variation of Hodge structure and Koll\'ar's result, which  bypasses the involving construction of the compactifications in \cite{BCGGM:multiscale, CMZ19}, and provides a uniform treatment for  strata of $k$-differentials (Theorem~\ref{th:asymp:count}) and absolutely rigid linear submanifolds (Theorem~\ref{th:aff:subman:count}).
The drawback is that our approach does not provide an effective way to compute the corresponding Masur-Veech volumes.
Nevertheless, one may expect that in some situations, where the construction of the finite cover mentioned above can be  carried out explicitly,  it is possible to obtain computable formulas for the Masur-Veech volumes by this method.

%

\subsection{Organization}\label{subsec:organization}
The paper is organized as follows: in \textsection\ref{sec:strate:n:lin:subvar} we recall basic properties of strata of Abelian differentials and introduce the notion of linear submanifolds as well as the variation of mixed Hodge structure over  these varieties. In \textsection\ref{sec:volform}, we give the definition of several natural volume forms on linear submanifolds and the relations between them. In \textsection\ref{sec:rational:vol}, we prove the rationality of the volumes of the projectivized linear submanifolds  that are polarized and absolutely rigid (cf. Theorem~\ref{th:vol:rational}). The proofs of Theorem~\ref{th:aff:subman:count} and of Theorem~\ref{th:asymp:count} are given in \textsection\ref{sec:proof:theorem2} and \textsection\ref{sec:count:tiling} respectively.

\subsection*{Acknowledgements.} We are grateful to Yohan Brunebarbe for explaining to us the ingredients of the proof of Theorem~\ref{th:vol:rational}, and for sharing with us his point of view on the construction of the volume form. 

\section{Moduli spaces of Abelian differential and linear subvarieties}\label{sec:strate:n:lin:subvar}
\subsection{Moduli spaces of Abelian differential}\label{subsec:strate:abel}
The moduli space $\htrate$ of pairs $(C, \omega)$ where $C$ is a smooth complex curve of genus $g$ and $\omega$ is a non trivial Abelian differential (i.e. a holomorphic 1-form) is the total space of the Hodge bundle over the moduli space $\cM_g$ of smooth curves of genus $g$, with the zero section removed.

The space $\htrate$  is an orbifold which is naturally stratified by the multiplicities of zeroes of the corresponding Abelian differentials. For any partition $\underline k=(k_1,\dots,k_n)$ of $ 2g - 2$ by positive integers, the associated stratum is a locally closed subset of $\htrate$ for the Zariski topology. The strata are always non-empty but not necessarily connected, though each has no more than three connected components \cite{KZ03}. Each stratum is a complex algebraic variety with a complex orbifold structure that will be denoted by $\trate$. To lighten the notation, throughout this paper, we use $\trate$ to designate a connected component of the corresponding stratum.

If $(C,\omega)\in\trate$, we will use the notation $Z(\omega)=\{x_1,\dots,x_n\}$, where the point $x_i\in C$ is a zero of $\omega$ of order $k_i$. We have a preferred atlas on $C - Z(\omega)$ given by the local primitives of the closed form $\omega$. Two charts in this atlas differ by a translation, so that this atlas defines on $C - Z(\omega)$ a flat metric structure with cone singularities at $Z(\omega)$. Its area is given by $A(C, \omega) = \frac{\imath}{2} \int_C \omega \wedge \bar \omega$.

In some situations, it is relevant to consider Abelian differentials $(C,\omg)$ together with some marked points on $C$ that are not zeros of $\omg$. Those marked points can be considered as zeros of order $0$ of $\omg$. Let $\ul{k}=(k_1,\dots,k_n)$ be a vector of {\bf non-negative} integers such that $k_1+\dots+k_n=2g-2$. We denote by $\trate$   the space of triples $(C,\omg,Z)$, where $(C,\omg)\in \htrate$, and $Z=\{x_1,\dots,x_n\}$ is a finite subset of $C$ such that $\mathrm{div}(\omg)=k_1x_1+\dots+k_nx_n$. Note that we do not fix any preferred numbering on the points in $Z$, the only requirement is that $x_i$ is a zero of order $k_i$.
Since the elements of $\htrate$ do not record the location of the marked points,  in the case some of the $k_i$'s are $0$, $\trate$ is not a subvariety of $\htrate$.
Nevertheless, it is well known that $\trate$  still enjoys the same properties as the strata of $\htrate$, in particular, $\trate$ is an algebraic  variety and has an orbifold structure as a complex analytic space.
In what follows we will call $\trate$ a  stratum of Abelian differentials indifferently whether $\ul{k}$ has $0$ entries or not.

\subsection{Period coordinates and linear submanifolds}\label{sec:local:coord}
Since each stratum is an orbifold, a local chart on an open subset $\cU\subset\trate$ will have to be understood as a chart over a finite (ramified) covering $\hat\cU$ of $\cU$, where $\hat\cU$ can be chosen simply connected, endowed with a linear action of a finite group $\Gamma_\cU$ such that $\cU=\hat\cU/\Gamma_\cU$. Objects over $\trate$ will be defined locally over $\hat\cU$ and endowed with an action of $\Gamma_\cU$.  Alternately, one can define $\trate$ as a Deligne-Mumford stack, but we will not use this point of view.

We fix a stratum $\cS=\trate$ and let $\cU=\hat\cU/\Gamma_\cU$ be a neighborhood of $(C,\omega)$, where $\hat\cU$ is as above. Then, for all $(C',\omega')\in\hat\cU$ one can canonically identify the relative homology group $\HH_1(C', Z(\omega'); \Z)$  and its dual $\HH^1(C', Z(\omega'); \Z)$ with $H_1(C,Z(\omega);\Z)$ and $H^1(C,Z(\omega);\Z)$ respectively.
Integrating the form $\omega$ along relative cycles, we obtain a class in $\HH^1(C,Z(\omega); \C)=\HH_{1}(C,Z(\omega); \C)^\vee$. A fundamental result of Veech \cite{Veech90} asserts that the resulting map $\hat\cU\rightarrow \HH^1 (C, Z(\omega); \C)$
(usually called a period mapping) is a local biholomorphism. In this way, we define a linear structure on $\trate$, meaning that we get an atlas with linear changes of coordinates. A choice of a $\Z$-basis of $\HH_1 (C, Z(\omega); \Z)$ will provide us with local coordinates around $(C, \omega)$ in $\trate$ that will be called period coordinates. Note that changes of period coordinates are given by matrices with integral coefficients.

There is a natural $\C^*$-action on $\trate$ by multiplying the Abelian differential by a scalar.
We will denote by $\Pb\trate=\trate/\C^*$ the projectivization of  $\trate$.
If $(C,\omg)$ is an element of $\trate$, its projectivization in $\Pb\trate$ will be denoted by $(C,[\omg])$.
By definition, $\Pb\trate$ is a locally closed subset of $\Pb\Omega\cM_g$, and $\trate$ can be interpreted as the total space of the tautological line bundle over $\Pb\trate$ with the zero section removed.

For our purpose, we will be particularly interested in the following class of subvarieties of $\trate$.

\begin{Definition}\label{def:lin:man}
A linear submanifold $\cM$ of $\trate$ is a complex algebraic subvariety such that the local irreducible components  of $\cM$ are defined by linear equations in period coordinates.
If $\cM$ is a linear submanifold of $\trate$, by a slight abuse of language, we will call $\Pb\cM:=\cM/\C^*$ a linear submanifold of $\Pb\trate$.
\end{Definition}

\begin{Remark}\label{rk:lin:subman:orbifold}
By definition, every local branch of $\cM$ (considered as a complex analytic space) corresponds to a vector subspace in local charts by period mappings of $\trate$.
Recall that  $\trate$ has a structure of a complex orbifold. It follows that the normalization of $\cM$ (where all the local branches are separated) also has a structure of complex orbifold.
\end{Remark}

Originally linear submanifolds arose from the study of $\GL^+(2,\R)$-action on $\trate$. It follows from the works  \cite{EM18, EMM15, Filip16} that every $\GL^+(2,\R)$-orbit closure in $\trate$ is a linear submanifold locally defined by linear equations with real coefficients. These are commonly known as {\em invariant affine submanifolds}. Other samples of linear subvarieties arise from strata of moduli spaces of pluri\-differentials (cf. \textsection\ref{sec:triang:quad}). In this case, locally the subvarieties are defined by linear equations with coefficients in $\Q(\zeta)$, where $\zeta$ is root of unity. In this paper, we are essentially concerned with these two families of linear submanifolds.
Note also that in \cite[Def. 6.4]{Moller08}, M\"oller introduced a notion of linear manifold which is similar to ours, but somewhat more restrictive.

\subsection{Numbered zeros and marked points}\label{subsec:numb:zeros}
Let $\cM_{g,n}$ denote the moduli space of $n$-pointed genus $g$ smooth curves. Given a vector $\ul{k}=(k_1,\dots,k_n)$ of non-negative integers such that $k_1+\dots+k_n=2g-2$, we denote by $\cM_{g,n}(\ul{k})$ the set of $(C,x_1,\dots,x_n) \in \cM_{g,n}$ such that $k_1x_1+\dots+k_nx_n$ is the zero divisor of a holomorphic $1$-form on $C$. The locus $\cM_{g,n}(\ul{k})$ is a subvariety of $\cM_{g,n}$. There is a natural algebraic line bundle over $\cM_{g,n}(\ul{k})$ defined as follows: let $p:\Ccal_{g,n}\ra \cM_{g,n}$ be the universal curve over $\cM_{g,n}$, and $\sigma_1,\dots,\sigma_n$ be the sections of $p$ associated with the marked points. Let $D_i\subset \Ccal_{g,n}$ be the image of $\sigma_i$. Note that $D_i$ is a divisor of $\Ccal_{g,n}$.
Let $\Lcal_i$ denote the line bundle $\Ocal(-D_i)$ on $\Ccal_{g,n}$. Let $K_{\Ccal_{g,n}/\cM_{g,n}}$ be the relative canonical line bundle associated with $p$, and
$$
\Kcal:=K_{\Ccal_{g,n}/\cM_{g,n}}\otimes\Lcal_1^{\otimes k_1}\otimes\dots\otimes\Lcal_n^{\otimes k_n}.
$$
By definition, the restriction of $\Kcal$ to the fiber of $p$ over every point in $\cM_{g,n}(\ul{k})$ is the trivial line bundle. Hence $\Lcal:=p_*(\Kcal_{|p^{-1}(\cM_{g,n}(\ul{k}))})$ is a line bundle over  $\cM_{g,n}(\ul{k})$ which will be called the tautological line bundle.
The complement of the zero section in the total space of $\Lcal$  is the set of tuples $(C,x_1,\dots,x_n,\omg)$, where $(C,x_1,\dots,x_n) \in \cM_{g,n}(\ul{k})$, and $\omg$ is a holomorphic $1$-form on $C$ such that $\mathrm{div}(\omg)=k_1x_1+\dots+k_nx_n$. We denote this set by $\Omega\cM_{g,n}(\ul{k})$. By construction $\Omega\cM_{g,n}(\ul{k})$ is a $\C^*$-bundle over $\cM_{g,n}(\ul{k})$.

There is a natural map $\Fcal: \Omega\cM_{g,n}(\ul{k}) \ra \trate$ which consists in forgetting the numbering of the marked points.
This is actually a finite morphism of algebraic varieties, which is also an orbifold covering between the underlying complex analytic spaces.
Given a point $\xx=(C,\omg) \in \trate$, we will often implicitly endow the set $Z(\omg)$ with a  compatible numbering, which means that we actually consider a point in $\Omega\cM_{g,n}(\ul{k})$ that projects to $\xx$.
That $\Fcal$ is an orbifold covering implies that this choice of numbering can be made consistently in  an orbifold  local chart.


By a {\em linear submanifold} of $\Omega\cM_{g,n}(\ul{k})$ we will mean a subvariety having the property described in Definition~\ref{def:lin:man}.
By extension, we will also call any algebraic variety which admits a finite morphism into $\Omega\cM_{g,n}(\ul{k})$ whose image has the property of Definition~\ref{def:lin:man} a linear submanifold of $\Omega\cM_{g,n}(\ul{k})$.

\subsection{Variation of Hodge structure associated with a stratum}\label{subsec:VMHS}\null\ \\
Let us now fix a stratum $\cS=\trate$. As we mentioned earlier, each stratum $\cS$ is an orbifold, as well as $\Pb\cS$. Actually, there exists a manifold $\Pb\hat\cS$ which is an orbifold covering of $\Pb\cS$.
To see this, we first recall that the forgetful map $\Fcal: \Omega\cM_{g,n}(\ul{k}) \ra \cS$ is an orbifold covering of finite degree. This map induces an orbifold covering $\hat{\Fcal}: \Pb\Omega\cM_{g,n}(\ul{k}) \ra \Pb\cS$. Since the complex line generated by a (non-trivial) Abelian differential is uniquely determined by its divisor, we can identify $\Pb\Omega\cM_{g,n}(\ul{k})$ with $\cM_{g,n}(\ul{k})$. This means that $\cM_{g,n}(\ul{k})$ is an orbifold covering of $\Pb\cS$.

By definition $\cM_{g,n}=\Tcal_{g,n}/\G_{g,n}$, where $\Tcal_{g,n}$ is the Teichm\"uller space of smooth curves of genus $g$ with $n$ marked points, and $\G_{g,n}$ is the corresponding modular group. Orbifold points of $\cM_{g,n}(\ul{k})$ correspond to fixed points of finite subgroups of $\G_{g,n}$. It is well known that $\G_{g,n}$ contains torsion free finite index subgroups. The preimage of $\cM_{g,n}(\ul{k})$ in some orbifold covering of $\cM_{g,n}$ associated with such subgroups is actually a complex manifold. In conclusion, we see that $\Pb\cS$ admits an orbifold covering $\Pb\hat{\cS}$ of finite degree  which is a smooth quasi-projective variety, and over which we have a universal family of $n$-pointed genus $g$ smooth curves.
Passing to a larger covering, one can even assume that there exists a finite group $\Gamma$ acting holomorphically on $\Pb\hat\cS$ such that $\Pb\cS=\Pb\hat\cS/\Gamma$.
Objects over $\Pb\cS$ are defined over $\Pb\hat\cS$ and endowed with an action of $\Gamma$.

Let $\cL$ be the pullback to $\Pb\hat\cS$ of the tautological line bundle over $\Pb\cS$.
Denote by $\hat\cS$ the total space of $\cL$ with the zero section removed.
By construction $\Pb\hat\cS=\hat\cS/\C^*$.
We will denote by $\pi:\hat\cS\rightarrow\Pb\hat\cS$ the natural projection.

Over $\Pb\hat\cS$, the relative homology groups $\HH_1 (C, Z(\omega); \Z)$ assemble in a $\Z$-local system.
We denote by $\cH_{rel}^1$ the dual $\Z$-local system, whose fiber is identified with $\HH^1(C,Z(\omega);\Z)$.
Note that the integration of the form $\omega$ along relative cycles as defined in \textsection\ref{sec:local:coord} can be seen as a holomorphic section $\tau$ over $\hat\cS$ of the holomorphic vector bundle $\pi^{-1}\cH_{rel}^1 \otimes_{\Z} \Ocal_{\hat\cS}$.

Since $\hat{\cS}$ and $\Pb\hat{\cS}$ are locally identified with $\cS$ and $\Pb\cS$ respectively, we will  often identify  elements of $\hat\cS$ (resp. of $\Pb\hat\cS$) with  elements of $\cS$ (resp. $\hat{\cS}$).
For any $(C, [\omega])$ in $\Pb\hat\cS $, denoting by $Z = Z(\omega)$ the zeroes of $\omega$, the relative cohomology group $\HH^1 (C, Z; \Z)$ fits in an exact sequence
\[ 0 \rightarrow \HH^0(C, \Z) \rightarrow  \HH^0(Z, \Z)   \rightarrow \HH^1(C, Z; \Z) \rightarrow \HH^1(C, \Z) \rightarrow 0.\]
Setting $\tilde \HH^0(Z, \Z) := \HH^0(Z, \Z) / \HH^0(C, \Z)$, this yields
\begin{equation}\label{eq:Hodge:mixed:ex:seq}
0 \rightarrow \tilde  \HH^0(Z, \Z)   \rightarrow \HH^1(C, Z; \Z) \rightarrow \HH^1(C, \Z) \rightarrow 0.
\end{equation}
On $\HH^0(C, \C)$ and $\HH^0(Z, \C)$ we have canonical (positive) polarizations defined over $\Z$. We endow $\tilde \HH^0(Z, \C)$ with the quotient polarization, which will be denoted by $h_0$. As for $\HH^1(C, \C)$, it is endowed with the Hermitian pseudo-metric $h_1$ of signature $(g,g)$ defined by
\[h_1(u,v) := \frac\imath2\int_C  u \wedge \bar v=\frac{\imath}{2}\sum_{j=1}^g u(a_j)\overline{v(b_j)} - u(b_j)\overline{v(a_j)}\]
where $(a_1,\dots,a_g,b_1,\dots,b_g)$ is any symplectic basis of $H_1(C,\Z)$.
Observe that the corresponding skew-symmetric form is defined over $\Z$.

Denoting by $F$ the complex vector subspace $\HH^0(C, \Omega^1)$ of $ \HH^1(C, \C)$ formed by cohomology classes of holomorphic $1$-forms, the Hodge filtration is the decreasing filtration $F^{\sbt}$ of $\HH^1(C, \C)$ defined by
\[ F^{2} =  \{0\} \subset F = F^1  \subset F^0 =  \HH^1(C,  \C).\]
If we let $(C, [\omega])$ vary in $\Pb\hat\cS$, then $W:=\tilde  \HH^0(Z, \Z)$ and $\HH^1(C, \Z)$ assemble in $\Z$-local systems $\cW$ and $\cH^1$ that fit in an exact sequence of $\Z$-local systems
\[ 0 \rightarrow \cW   \rightarrow \cH^1_{rel} \rightarrow \cH^1 \rightarrow 0.\]
Remark that with our assumption on $\Pb\hat\cS$, $\cW$ is actually constant. The Hermitian forms $h_0$ and $h_1$ induce a flat (constant) Hermitian metric $h_0$ on $\cW_{\C}$ and a flat Hermitian pseudo-metric $h_1$ on $\cH^1_{\C}$ respectively.  Correspondingly, the $\Z$-local system $\cH^1$ (resp. $\cW$) supports a variation of polarized $\Z$-pure  Hodge structure of weight 1 (resp. of weight 0 associated with the trivial filtration).

The group $ \HH^1(C, Z; \Z) $ is endowed with a graded-polarized $\Z$-mixed Hodge structure, and the exact sequence \eqref{eq:Hodge:mixed:ex:seq} expresses it as an extension of pure polarized $\Z$-Hodge structures.
The weight filtration $W_{\sbt}$ of $\HH^1(C, Z; \Z)$ is defined by
\[ W_{-1} =  \{0\} \subset W = W_0 =  \tilde  \HH^0(Z, \Z)  \subset W_1 =  \HH^1(C, Z;\Z)\]
and the Hodge filtration of $ \HH^1(C, Z; \C)$ is the pullback of the Hodge filtration on $\HH^1(C, \C)$.
This defines on $\cH^1_{rel}$ a variation of graded-polarized $\Z$-mixed Hodge structure.

\subsection{Linear submanifolds revisited}\label{subsec:linsubrev}
Let  $\Pb\cM\subset\Pb\cS$ be a linear submanifold (cf. Definition~\ref{def:lin:man}). 
The preimage $\Pb\hat{\cM}$ of $\Pb\cM$ in $\Pb\hat{\cS}$ is  also a linear submanifold of $\Pb\hat{\cS}$.
The normalization of $\Pb\hat{\cM}$ is then a smooth algebraic variety equipped with an immersive finite generically one-to-one morphism into $\Pb\hat\cS$.
We abusively denote this smooth variety by $\Pb\hat\cM$.
Define  $\hat\cM$ to be the pullback of the tautological $\C^*$-bundle over $\Pb\hat\cS$ to  $\Pb\hat\cM$.
By construction, there is a finite generically one-to-one morphism from $\hat{\cM}$ to $\hat{\cS}$.
Locally on $\hat\cM$ (in the Euclidean topology), the local irreducible components of the image of this map in $\hat{\cS}$ are defined by linear equations in period coordinates.

By definition, given $(C,[\omg]) \in \Pb\hat{\cM} \subset \Pb\hat{\cS}$, there is a linear subspace $V \subset H^1(C,Z(\omg);\C)$ such that a neighborhood of $(C,[\omg])$ in $\Pb\hat{\cM}$ can be identified with an open subset of $\Pb V:=(V\setminus\{0\})/\C^*$ via a period mapping. Define $V_1:=\pp(V)\subset H^1(C,\C)$, and $V_0:=\ker\pp\cap V$. We then have the following exact sequence
\begin{equation}\label{eq:exact:seq:lin:subv}
0 \ra V_0 \ra V \overset{\pp }{\ra} V_1 \ra 0.
\end{equation}
The trivial bundles with fibers $V_0,V,V_1$ over open subsets as above patch together to form three local systems over $\Pb\hat{\cM}$, which will be denoted by $\cV_0,\cV,\cV_1$ respectively.
Note that $\cV_0,\cV,\cV_1$ are sub-local systems of $\cW_{\C}, (\cH^1_{\rm rel})_{\C}, \cH^1_{\C}$ respectively. We have the following exact sequence
\begin{equation}\label{eq:exact:seq:loc:sys:lin:subv}
0 \rightarrow \cV_0   \rightarrow \cV \rightarrow \cV_1 \rightarrow 0.
\end{equation}

By Deligne~\cite{Deligne:TH2}, the $\Q$-local system $(\cH^1_{\Q})_{|\Pb\hat\cM}$ is semi-simple and hence
\begin{Proposition}
The $\C$-local system $\cV_1$ is semi-simple.
\end{Proposition}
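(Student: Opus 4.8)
The plan is to derive the statement in two formal steps from the semisimplicity of $(\cH^1_{\Q})_{|\Pb\hat\cM}$ recalled just above, together with the fact --- already established via \eqref{eq:exact:seq:lin:subv} and \eqref{eq:exact:seq:loc:sys:lin:subv} --- that $\cV_1$ is a $\C$-sub-local system of $(\cH^1_{\C})_{|\Pb\hat\cM}$. Thus it suffices to show: \textbf{(a)} $(\cH^1_{\C})_{|\Pb\hat\cM}$ is a semisimple $\C$-local system; and \textbf{(b)} every sub-local system of a semisimple $\C$-local system is semisimple. Throughout one works over a fixed connected component of $\Pb\hat\cM$, so that local systems are identified with finite-dimensional representations of its fundamental group $\pi_1$.

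For \textbf{(a)}, write $(\cH^1_{\C})_{|\Pb\hat\cM} = (\cH^1_{\Q})_{|\Pb\hat\cM}\otimes_{\Q}\C$ and let $M$ denote the $\Q[\pi_1]$-module underlying $(\cH^1_{\Q})_{|\Pb\hat\cM}$. It is semisimple by Deligne \cite{Deligne:TH2}, since it underlies the polarizable weight-$1$ variation of $\Q$-Hodge structure obtained by pulling back the VHS carried by $\cH^1$ over $\Pb\hat\cS$ along the immersive morphism $\Pb\hat\cM\to\Pb\hat\cS$. The image $A$ of $\Q[\pi_1]$ in $\mathrm{End}_{\Q}(M)$ is then a finite-dimensional $\Q$-algebra possessing a faithful semisimple module of finite length, hence is a semisimple $\Q$-algebra; as $\Q$ is perfect, $A\otimes_{\Q}\C$ is again semisimple, so the $(A\otimes_{\Q}\C)$-module $M\otimes_{\Q}\C$ is semisimple. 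Since the $\C[\pi_1]$-action on $M\otimes_{\Q}\C$ factors through $A\otimes_{\Q}\C$, this $\C[\pi_1]$-module is semisimple, i.e. $(\cH^1_{\C})_{|\Pb\hat\cM}$ is a semisimple $\C$-local system.

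For \textbf{(b)}, $\C$-local systems on a connected base form an abelian category, equivalent to that of finite-dimensional $\C[\pi_1]$-modules, and in any abelian category a sub-object of a semisimple object is semisimple; applying this to $\cV_1\hookrightarrow(\cH^1_{\C})_{|\Pb\hat\cM}$ gives the claim. Both steps are essentially formal, so there is no substantive obstacle here; the only points meriting attention are that the base change from $\Q$ to $\C$ preserves semisimplicity --- which relies on $\Q$ being perfect, and would fail in positive characteristic --- and that $\cV_1$ must be realized as a sub-local system of $(\cH^1_{\C})_{|\Pb\hat\cM}$ itself, and not merely of $((\cH^1_{\rm rel})_{\C})_{|\Pb\hat\cM}$, which is exactly what the exact sequences \eqref{eq:exact:seq:lin:subv}--\eqref{eq:exact:seq:loc:sys:lin:subv} supply.
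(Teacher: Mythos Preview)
Your proof is correct and follows the same route as the paper: the paper simply cites Deligne's semisimplicity theorem for $(\cH^1_{\Q})_{|\Pb\hat\cM}$ and writes ``hence'', while you spell out the two implicit steps (semisimplicity is preserved under the base change $\Q\to\C$ since $\Q$ is perfect, and subobjects of semisimple objects are semisimple). One minor remark: in step \textbf{(a)}, when you pass from semisimplicity over $A\otimes_{\Q}\C$ to semisimplicity over $\C[\pi_1]$, it is worth noting that the map $\C[\pi_1]\to A\otimes_{\Q}\C$ is \emph{surjective}, so that the two notions of submodule coincide; this is implicit in your construction but is what makes the transfer of semisimplicity go through.
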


\begin{Remark}\label{rk:loc:syst:tangent:bdl}
Let ${\rm pr}: \hat{\cM} \ra \Pb\hat{\cM}$ be the natural projection. By definition,  ${\pr}^{-1}\cV$ is identified with the tangent bundle to $\hat\cM$.
\end{Remark}

We still denote by $h_0$ resp. $h_1$ their restriction to $\cV_0$ resp. $\cV_1$ . As they are flat, $(\det h_0 \otimes |\det h_1|)^\vee$ defines a flat Hermitian form
\footnote{For any finite dimensional $\C$-vector space $W$ endowed with a non degenerate Hermitian form $h$, we define the Hermitian form $\det h$ on $\det W=\bigwedge^{\dim_\C W} W$ by $((\det h)(w_1\wedge\dots\wedge w_r))^2:=\det (h(w_i,w_j))_{1\leq i,j\leq r}$, if $(w_1,\dots,w_r)$ is any basis of $W$}
on the canonical bundle $K_{\hat\cM}\simeq\pr^{-1}(\det \cV)^\vee$.
However, if $h_1$ is degenerate (in restriction to $\cV_1$), this Hermitian form vanishes, something we would like to avoid. This leads us to the following

\begin{Definition}\label{def:polarized}
A {\em polarized} linear submanifold $\Pb\cM$ of $\Pb\trate$ is a linear submanifold such that $\cV_1$ is a subvariation of Hodge structure of $(\cH^1_\C)_{|\Pb\hat\cM}$. In particular, the restriction of $h_1$ to $\cV_1$ is non degenerate.
\end{Definition}

Note that this condition is actually automatically satisfied if we assume that no other local subsystem of $(\cH^1_{\C})_{|\Pb\hat\cM}$ is isomorphic to $\cV_1$. Indeed in that case, $\cV_1$ inherits from $(\cH^1_{\C})_{|\Pb\hat\cM}$ a structure of complex variation of Hodge structures, see~\cite[\textsection1.12-13]{Deligne:TFM}.
Concretely, this means that the following decomposition holds
\[\cV_1 = (F \cap \cV_1) \oplus (\bar F \cap \cV_1).\]

\begin{Remark}\label{rk:choice:of:cover}
Definition~\ref{def:polarized} is independent of the choice of $\hat\cM$.
\end{Remark}

\section{Volume form}\label{sec:volform}

\subsection{Push forward measure}\label{sec:push:measure}
Let $\cM$ be a polarized linear submanifold in $\cS$.
Consider the exact sequence \eqref{eq:exact:seq:lin:subv}.
Recall that we are given a positive Hermitian form $h_0$ on $V_0$ and a non-degenerate Hermitian form $h_1$ on $V_1$. We abusively denote by $h_1$ the (degenerate) Hermitian form induced by $h_1$ on $V$.
The Hermitian forms $h_0$ and $h_1$ define a Hermitian metric $\det(h_0)\otimes|\det(h)_1|$ on $\bigwedge^d_\C V$ as follows:  let $(e_1,\dots,e_d)$ be a $\C$-basis of $V$, where $(e_1,\dots,e_r)$ is a basis of $V_0$. We define
$$
(\det h_0\otimes|\det h_1|(e_1\wedge\dots\wedge e_d))^2:=\det(h_0(e_i,e_j)_{1\leq i,j\leq r})\cdot|\det(h_1(e_\ell,e_m)_{r+1\leq \ell,m\leq d})|
$$
Let $(\det h_0\otimes \det h_1)^\vee$ denote the dual metric on $\bigwedge^d_\C V^*$.
Let $\sigma \in \bigwedge^d_\C V^*$ be an element of norm 1 with respect to this metric.
Then $\left(\frac{\imath}{2}\right)^d\sigma\wedge\bar{\sigma}$ is a real volume form on $V$.
Since  $\hat{\cM}$ is locally identified with $V$, and $h_0,h_1$ are invariant by the monodromies, this volume form gives a well defined volume form $d\vol$ on $\hat{\cM}$.

\medskip

When $\cM=\cS$, that is $V=H^1(C,Z;\C)$, one can define a volume form on $\cS$ using the invariance of the lattice $H^1(C,Z;\Z\oplus\imath\Z)$.
This volume form is commonly known as the {\em Masur-Veech measure} of the stratum $\cS$.
The Masur-Veech measure is particularly relevant for Teichm\"uller dynamics in moduli spaces and billiards in rational polygons.
By construction,  $d\vol$ differs from the Masur-Veech volume form by a constant.
If $\cM$ is a proper linear submanifold of $\cS$ which is locally defined by homogeneous equations with real coefficients, then by the work of Eskin-Mirzakhani~\cite{EM18}, $\cM$ carries a distinguished volume form which also differs from $d\vol$ by a constant.

\medskip

The volume form $d\vol$ induces a measure $\mu$ on $\Pb\hat\cM$ in the following way: let ${\rm pr}:\hat\cM\rightarrow\Pb\hat\cM$ be the projection.
Recall that $\hat\cM$ is identified with the total space of the tautological bundle $\cL$ over $\Pb\hat\cM$ with the zero section removed. On the line bundle $\cL$, we have the  Hodge metric $||.||$ given by $ ||\omg||^2:=\frac{\imath}{2}\int_C\omg\wedge\ol{\omg}$, for every $(C,\omg) \in \hat{\cM}$.
Let $\hat\cM_{\leq 1}$ be the set of $(C,\omg) \subset\hat\cM$ such that $||\omg|| \leq 1$.
The measure $\mu$ on $\Pb\hat\cM$ is defined by the formula $\mu(B)=\vol({\rm pr}^{-1}(B)\cap \hat\cM_{\leq 1})$, where $B$ is any Borel subset of $\Pb\hat\cM$.

\medskip

Let us now give a description of $\mu$ in more concrete terms.
Let $(C,\omega)$ be a point in $\hat{\cM}$, and $(C,[\omg])$ be its projection in $\Pb\hat{\cM}$.
By some period mapping $\phi$, a neighborhood of $(C,\omg)$ in $\hat{\cM}$ is identified with an open subset of a linear subspace $V \subset H^1(C,Z(\omg);\C)$, which is the fiber of $\cV$ over $(C,[\omega])$.
Note that if $v=\phi(C,\omg)$ then $\|\omg\|^2=h_1(v,v) >0$. Thus the image of $\phi$ is contained in the cone $C^+:=\{v\in V, \; h_1(v,v)>0\}$.
Let $\Pb^+(V):= C^+/\C^* \subset\Pb(V)$ and $\proj : C^+ \rightarrow \Pb^+(V) $ the natural projection.
Let also $C^+_{\leq 1} := \{ v \in V \, | \,  0 < h_1(v,v) \leq 1 \}$.
For any subset $A\subset \Pb^+(V)$, let $C_{\leq 1}(A):=\proj^{-1}(A)\cap C^+_{\leq 1}$.

Since the period mapping $\phi$ is equivariant with respect to the $\C^*$-actions on $\hat{\cM}$ and $V$, it induces a biholomorphism $\hat{\phi}$ from a neighborhood $U$ of $(C,[\omg])$ in $\Pb\hat{\cM}$ onto an open subset $\Omega$ of $\Pb^+(V)$.
Moreover, $\phi$ induces an isomorphism of Hermitian line bundles $\cL_{|U}\simeq \Lb_{\Pb^+(V)|\Omega}$, where $\Lb_{\Pb^+(V)}$ is the tautological line bundle over $\Pb^+(V)$ endowed with the Hermitian metric $h_1$ .
In this setting, the restriction of the measure $\mu$ to $U$ is given as follows: for any Borel subset $B\subset U$, we have
\[ \mu(B)=\vol(C_{\leq 1}(\hat{\phi}(B))). \]
From this description, it is not difficult to see that $\mu$ is actually induced by a volume form $d\mu$ on $\Pb\hat\cM$ (see Lemma~\ref{lm:pushed:vol:form} below).

\subsection{Alternate definition of $d\mu$}\label{sec:defvolform}
Let us describe the construction of $d\mu$ from  another point of view.
We denote by $G \subset \GL(V)$ the subgroup consisting in automorphisms that act as the identity on $V_0$ and whose induced automorphism on $V_1$ preserves $h_1$.
The group $G$ fits in an exact sequence $ 0 \rightarrow {\rm Hom}(V_1, V_0) \rightarrow G \rightarrow U(V_1, h_1) \rightarrow  1$. One easily checks that $\Pb^+(V) $ is an orbit for the action of $G$ on $\Pb(V)$.
Since $\hat\cM$ is locally modeled on $(G,C^+)$, objects on $\hat\cM$ and $\Pb\hat\cM$ can be defined as $G$-invariant objects on $C^+$ and  $\Pb^+(V)$.

\medskip

A volume form on a complex manifold $X$  can be viewed as a section of the bundle $K_X\otimes\bar K_X$ where $K_X$ is the canonical bundle of $X$. A Hermitian metric on the canonical bundle of $X$ induces a metric $|\,.\,|$ on volume forms. A complex manifold being orientable, it always admits a non vanishing global volume form $dV$. We will say that the volume form $dV/|dV|$ is the volume form associated with the metric.


Since $\Pb^+(V)$ is an homogeneous manifold for the action of $G$, its tangent bundle, and hence its canonical bundle, is naturally a $G$-equivariant bundle. This is also the case for the restriction to $\Pb^+(V)$ of the tautological line bundle $\Lb$ over $\Pb(V)$.
Note also that $h_1$ induces on $\Lb$ a $G$-invariant Hermitian metric that we still denote by $h_1$.
On $\Pb^+(V)$ we have the Euler exact sequence  of bundles
$$
0\rightarrow \Omega^1_{\Pb^+(V)}\rightarrow V^\vee\otimes \Lb\rightarrow \C \rightarrow 0.
$$
(see \cite[Theorem II.8.13]{Hart}). It follows that the canonical line bundle $K_{\Pb^+(V)}$ is isomorphic to $\det(V)^\vee\otimes \Lb^{\otimes\dim V}$. Thus $(\det h_0 \otimes |\det h_1| )^\vee \otimes h_1^{\otimes \dim V }$ defines a $G$-invariant metric on $K_{\Pb^+(V)}$.

Specifically, let $(e_1,\dots,e_r)$ be an orthonormal basis of $V_0$ with respect to $h_0$, and $(e'_{r+1},\dots,e'_d)$ an orthonormal basis of $V_1$ with respect to $h_1$. For $i=r+1,\dots,d$, let $e_i$ be a vector in $V$ that projects to $e'_i$.
Then $(e_1,\dots,e_d)$ is a basis of $V$, and $e_1\wedge\dots\wedge e_d \in \det V$ has norm 1 with respect to $\det h_0 \otimes |\det h_1|$.
Let $(z_1,\dots,z_d)$ be the coordinates of $V$ in the basis $(e_1,\dots,e_d)$.
Then the associated volume form on $V$ is
$$
d\vol=\left(\frac{\imath}{2}\right)^{d} dz_1\wedge d\bar{z}_1\wedge\dots\wedge dz_{d}\wedge d\bar{z}_{d}.
$$
Let $v$ be a vector in $C^+(V)$.  Since $h_1(v,v)>0$,  there is some $i\in \{r+1,\dots,d\}$ such that then $z_i(v)\not=0$.
We can assume that $z_d(v)\neq 0$.
In a neighborhood of $[v]$ in $\Pb^+(V)$, we have the local coordinates $w:=(w_1,\dots,w_{d-1})=\Bigl(\frac{z_1}{z_d},\dots,\frac{z_{d-1}}{z_d}\Bigr)$. Using the isomorphism provided by the Euler exact sequence, the volume form associated with $(\det h_0 \otimes |\det h_1| )^\vee \otimes h_1^{\otimes \dim V}$ on $\Pb^+(V)$ writes
$$
d\lambda= \left(\frac{\imath}{2}\right)^{d-1}\cdot\frac{1}{h^d(w)} dw_1\wedge d\bar{w}_1\wedge \dots \wedge dw_{d-1}\wedge d\bar{w}_{d-1}
$$
where $h(w):=h_1\bigl(w_1,\dots,w_{d-1},1\bigr)$.
Recall that in this setting the measure $\mu$ is the pushed forward of the volume form $d\vol$ on $C^+_{\leq 1}$ by the natural projection $\mathrm{pr}_{|C^+_{\leq 1}}: C^+_{\leq 1} \ra \Pb^+(V)$.

\begin{Lemma}\label{lm:pushed:vol:form}
The measure $\mu$ is defined by a volume form $d\mu$ on $\Pb^+(V)$, and we have
$$
d\mu=\frac{\pi}{\dim V}d\lambda.
$$
\end{Lemma}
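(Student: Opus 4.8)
The plan is to carry out the computation in the affine chart $\{z_d\neq 0\}$ fixed in \textsection\ref{sec:defvolform}, replacing the linear coordinates $(z_1,\dots,z_d)$ on $C^+$ by the ``radial--projective'' coordinates $(w_1,\dots,w_{d-1},z_d)$, with $w_i=z_i/z_d$, and then integrating out the fiber coordinate $z_d$ by Fubini's theorem. In these coordinates the projection $\pr\colon C^+\to\Pb^+(V)$ is simply $(w,z_d)\mapsto w$, the $\C^*$-homogeneity of $h_1$ gives $h_1(v,v)=|z_d|^2\,h(w)$ with $h(w)=h_1(w_1,\dots,w_{d-1},1)$, and $h(w)>0$ exactly on the image of $\Pb^+(V)$; consequently $C^+_{\leq 1}$ is identified with $\{(w,z_d)\ :\ 0<|z_d|\leq h(w)^{-1/2}\}$.

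Next I would rewrite $d\vol$ in the new coordinates. Since $z_i=w_iz_d$ depends holomorphically on $(w,z_d)$, the holomorphic Jacobian of this change of variables is the determinant of a block upper-triangular matrix with diagonal blocks $z_d\cdot I_{d-1}$ and $1$, hence equals $z_d^{d-1}$; therefore the real Jacobian on $\R^{2d}$ is $|z_d|^{2(d-1)}$, so that
\[
d\vol=|z_d|^{2(d-1)}\,\Bigl(\tfrac\imath2\Bigr)^{d-1}dw_1\wedge d\bar w_1\wedge\cdots\wedge dw_{d-1}\wedge d\bar w_{d-1}\wedge\tfrac\imath2\,dz_d\wedge d\bar z_d .
\]

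Then, for a Borel subset $B$ of the chart, Fubini's theorem gives $\mu(B)=\vol\bigl(\pr^{-1}(B)\cap C^+_{\leq 1}\bigr)=\int_B F(w)\,\bigl(\tfrac\imath2\bigr)^{d-1}dw_1\wedge d\bar w_1\wedge\cdots\wedge dw_{d-1}\wedge d\bar w_{d-1}$, with
\[
F(w)=\int_{0<|z_d|\leq h(w)^{-1/2}}|z_d|^{2(d-1)}\,\tfrac\imath2\,dz_d\wedge d\bar z_d=2\pi\int_0^{h(w)^{-1/2}}\rho^{2d-1}\,d\rho=\frac{\pi}{d}\cdot\frac{1}{h(w)^{d}}
\]
by passing to polar coordinates $z_d=\rho e^{\imath\theta}$. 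Since $d\lambda=\bigl(\tfrac\imath2\bigr)^{d-1}\frac{1}{h(w)^{d}}\,dw_1\wedge d\bar w_1\wedge\cdots\wedge dw_{d-1}\wedge d\bar w_{d-1}$ by \textsection\ref{sec:defvolform}, and $d=\dim_\C V$, this reads $\mu(B)=\frac{\pi}{\dim V}\int_B d\lambda$. This proves at once that $\mu$ is defined by a volume form and that $d\mu=\frac{\pi}{\dim V}\,d\lambda$; the identity is independent of the chosen chart since $d\lambda$ is the globally defined volume form attached to the metric $(\det h_0\otimes|\det h_1|)^\vee\otimes h_1^{\otimes\dim V}$ on $K_{\Pb^+(V)}$.

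No serious difficulty is expected: this is the standard ``volume of the cone over a projective manifold'' computation. The only points requiring attention are keeping the powers of $d$ straight in the radial integral — the constant $\tfrac\pi d$ arises precisely from $\int_0^R\rho^{2d-1}\,d\rho=R^{2d}/(2d)$ together with the angular factor $2\pi$, with $R^2=h(w)^{-1}$ — and checking that the normalizations of $d\vol$ (unit norm for $\det h_0\otimes|\det h_1|$) and of $d\lambda$ used above are exactly the ones fixed in \textsection\ref{sec:defvolform}; this is bookkeeping rather than mathematics.
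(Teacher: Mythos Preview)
Your proof is correct and follows essentially the same approach as the paper's: the paper parametrizes the cone directly via $(\theta,t,w)\mapsto te^{\imath\theta}(w_1,\dots,w_{d-1},1)$ and pulls back $d\vol$, while you first change to $(w,z_d)$ and then pass to polar coordinates $z_d=\rho e^{\imath\theta}$, but the resulting fiber integral $2\pi\int_0^{h(w)^{-1/2}}\rho^{2d-1}\,d\rho=\frac{\pi}{d}h(w)^{-d}$ is identical in both.
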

\begin{proof}
Let $B$ be an open neighborhood of $[v]$ in $\Pb^+(V)$. Consider the map
$$
\begin{array}{cccc}
\phi: & [0;2\pi]\times\R_+^*\times B & \ra & V\\
             & (\theta,t,w_1,\dots,w_{d-1}) & \mapsto & te^{\imath\theta}(w_1,\dots,w_{d-1},1)=(z_1,\dots,z_d).
\end{array}
$$
We have $C(B):=\phi([0;2\pi]\times\R^*_+\times B)\subset V\setminus\{0\}$ is the cone over $B$. By definition,
$$
\phi^{-1}(C(B)\cap C^+_{<1})=\biggl\{(\theta,t,w) \in [0;2\pi]\times\R_+^*\times B, \; 0< t < \frac{1}{\sqrt{h(w)}}\biggr\}.
$$
A quick computation gives
$$
\phi^*d\vol=\left(\frac{\imath}{2}\right)^{d-1} t^{2d-1}dt\wedge d\theta\wedge dw_1\wedge d\bar{w}_1\wedge\dots\wedge dw_{d-1}\wedge d\bar{w}_{d-1}.
$$
It follows that
\begin{eqnarray*}
\mu(B)&=& \left(\frac{\imath}{2}\right)^{d-1} \int_0^{2\pi}d\theta \int_B \left(\int_0^{\frac{1}{\sqrt{h(w)}}} 2t^{2d-1}dt\right)  dw_1d\bar{w}_1\dots dw_{d-1}d\bar{w}_{d-1}\\
     &=& \left(\frac{\imath}{2}\right)^{d-1}\cdot\frac{\pi}{d}\int_B \frac{1}{h^{d}(w)}dw_1d\bar{w}_1\dots dw_{d-1}d\bar{w}_{d-1}
\end{eqnarray*}
which implies that $\mu$ is induced by the volume form
$$
d\mu:=\left(\frac{\imath}{2}\right)^{d-1}\cdot\frac{\pi}{dh^d(w)}dw_1dw_1\dots dw_{d-1}d\bar{w}_{d-1}=\frac{\pi}{d}d\lambda.
$$
\end{proof}

\subsection{Pure case}\label{sec:vol:form:pure}
We now turn to the case where the term $V_0$ in \eqref{eq:exact:seq:lin:subv} is trivial.
\begin{Lemma}\label{lm:vol:form:n:Chern:cl}
Assume that $\cM$ is a polarized absolutely rigid linear submanifold of some stratum $\cS$.
  Let $\Theta$ denote the curvature form of the Hodge metric on $\cL$. 
  Let $(p,q)$ be the signature of the restriction of $h_1$ to $V$.
  Then we have the following equality at every point in $\Pb\hat{\cM}$.
\[ d\mu=\frac{(-1)^{p-1}2\pi}{2^dd!}(\imath\Theta)^{d-1},\]
where $d=\dim\cM$.
\end{Lemma}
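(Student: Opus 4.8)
The plan is to run the computation in the local model $(G,C^+)$ of \textsection\ref{sec:defvolform}. Since $\cM$ is absolutely rigid we are in the pure case $V_0=0$, so $V=V_1$, and since $\cM$ is polarized $h_1$ is a nondegenerate Hermitian form on $V$ of some signature $(p,q)$ with $p+q=d$ and $p\geq1$ (the last because $\cM$ contains translation surfaces of positive area, so $\Pb^+(V)\neq\vide$). I keep the $h_1$-orthonormal basis $(e_1,\dots,e_d)$ of $V$ used to define $d\lambda$, so that the Gram matrix $H=(h_1(e_i,e_j))$ is diagonal with entries $\pm1$ and $\det H=(-1)^q$. Around a point $[v]\in\Pb^+(V)$ with $z_d(v)\neq0$ I use the affine coordinates $w=(w_1,\dots,w_{d-1})$ and the holomorphic frame $s(w)=(w_1,\dots,w_{d-1},1)$ of the tautological bundle $\Lb$; its squared Hodge norm is the polynomial $h(w)=h_1(s(w),s(w))>0$ of \textsection\ref{sec:defvolform}, and the curvature of $(\Lb,h_1)$ is $\imath\Theta=-\imath\partial\bar\partial\log h$.

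Next I would compute the complex Hessian of $\log h$. Writing $a_i=\partial_{w_i}h$, one has $\partial_{w_i}\partial_{\bar w_j}h=H_{ij}$ (the $(d-1)\times(d-1)$ block $H'$ of $H$), hence
$$M_{ij}:=\partial_{w_i}\partial_{\bar w_j}\log h=\frac{H_{ij}}{h}-\frac{a_i\bar a_j}{h^2},$$
so that $\imath\partial\bar\partial\log h=\imath\sum_{i,j}M_{ij}\,dw_i\wedge d\bar w_j$. Raising to the power $d-1$ and using the elementary identity $\bigl(\imath\sum_{i,j}M_{ij}\,dw_i\wedge d\bar w_j\bigr)^{d-1}=(d-1)!\,(\det M)\bigwedge_{i=1}^{d-1}(\imath\,dw_i\wedge d\bar w_i)$ reduces everything to the evaluation of $\det M$. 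This is the only genuine computation: by the matrix determinant lemma, $\det M=\frac{\det H'}{h^{d}}\bigl(h-\bar a^{\top}(H')^{-1}a\bigr)$; and writing $a=H'\bar w+b$ and $\bar a^\top=w^\top H'+b^*$ with $b=(H_{1d},\dots,H_{d-1,d})^\top$, the quantity $h-\bar a^{\top}(H')^{-1}a$ simplifies, after cancellation, to the Schur complement $H_{dd}-b^*(H')^{-1}b=\det H/\det H'$. Therefore $\det M=\det H/h^{d}$, and
$$(\imath\Theta)^{d-1}=(-1)^{d-1}(d-1)!\,\frac{\det H}{h^{d}}\bigwedge_{i=1}^{d-1}(\imath\,dw_i\wedge d\bar w_i).$$

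Finally I would compare this with $d\lambda$. Since $\imath\,dw_i\wedge d\bar w_i=2\,dx_i\wedge dy_i$ (with $w_i=x_i+\imath y_i$), the formula for $d\lambda$ in \textsection\ref{sec:defvolform} reads $d\lambda=h^{-d}\,dx_1\wedge dy_1\wedge\dots\wedge dx_{d-1}\wedge dy_{d-1}$, while $\bigwedge_{i=1}^{d-1}(\imath\,dw_i\wedge d\bar w_i)=2^{d-1}\,dx_1\wedge dy_1\wedge\dots\wedge dx_{d-1}\wedge dy_{d-1}$, whence $(\imath\Theta)^{d-1}=(-1)^{d-1}(d-1)!\,2^{d-1}(\det H)\,d\lambda$. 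Invoking Lemma~\ref{lm:pushed:vol:form}, i.e. $d\mu=\frac{\pi}{d}\,d\lambda$, and substituting $\det H=(-1)^q$ with $q=d-p$, gives
$$d\mu=\frac{\pi}{(-1)^{d-1}(\det H)\,d!\,2^{d-1}}(\imath\Theta)^{d-1}=\frac{(-1)^{p-1}\pi}{2^{d-1}d!}(\imath\Theta)^{d-1}=\frac{(-1)^{p-1}2\pi}{2^{d}d!}(\imath\Theta)^{d-1},$$
using the parity identity $(-1)^{d-1}(-1)^{q}=(-1)^{2d-1-p}=(-1)^{p-1}$. This is the claimed formula, and it holds pointwise since all the identities above are pointwise.

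I expect the sign and constant bookkeeping — the sign in $\imath\Theta=-\imath\partial\bar\partial\log h$, the powers of $\imath$ relating $\bigwedge(\imath\,dw_i\wedge d\bar w_i)$, $d\lambda$ and $d\mu$, and the parity $(-1)^{d-1}\det H=(-1)^{p-1}$ — to be the fiddly part, rather than any conceptual difficulty; the determinant evaluation $\det M=\det H/h^{d}$ is exactly the indefinite analogue of the classical fact that the top power of the Fubini--Study form on $\CP^{d-1}$ equals $(d-1)!/\pi^{d-1}$ times the normalized volume, and it is a one-line application of the matrix determinant lemma and the Schur complement formula. Alternatively, one can shorten the argument by homogeneity: both $d\mu$ and $\Theta$ are $G$-invariant (by construction, resp. because $h_1$ is $G$-invariant on $\Lb$) and $G$ acts transitively on $\Pb^+(V)$, so it suffices to check the identity at the single point $[e_d]$ with $e_d$ a positive vector of the orthonormal basis, where $w=0$, $h=1$, $a=0$, so $M=\mathrm{diag}(\pm1)$ with exactly $q$ entries equal to $-1$ and $\det M=(-1)^q$ is immediate.
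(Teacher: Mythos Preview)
Your argument is correct. The paper's own proof takes the shortcut you mention at the very end: it observes that both $d\mu$ and $(\imath\Theta)^{d-1}$ are invariant under $\Ur(p,q)$, which acts transitively on $\Pb^+(V)$, so their ratio is constant and it suffices to evaluate at the single point $[1:0:\dots:0]$ (in coordinates where $h_1$ is diagonal with the first entry positive); there the Hessian is already diagonal and the constant drops out in two lines. Your primary route is instead a direct pointwise computation valid at an arbitrary $[v]$ with $z_d(v)\neq 0$: you identify $(\imath\Theta)^{d-1}$ with $(d-1)!\det M$ times the standard volume element and reduce the evaluation of $\det M$ to the matrix determinant lemma plus the Schur complement identity $\det H=\det H'\cdot(H_{dd}-b^*(H')^{-1}b)$, obtaining $\det M=\det H/h^d$ globally. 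This buys a self-contained verification that does not invoke the group action, and in particular makes transparent that the identity is literally pointwise rather than merely an equality of invariant top forms; the paper's approach is shorter but relies on the homogeneity of $\Pb^+(V)$. Since you also sketch the homogeneity argument, your write-up in effect contains the paper's proof as a special case.
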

\begin{proof}
Recall that for polarized absolutely rigid submanifolds,  the map $\pp:V \to V_1$ in \eqref{eq:exact:seq:lin:subv} is an isomorphism.
Thus $h_1$ is a non-degenerate Hermitian form on $V$, which means that $d=p+q$.

As above, let $\Lb$ denote the restriction of the tautological line bundle on $\Pb(V)$ to $\Pb^+(V)$. Over $\Pb^+(V)$, $h_1$ provides us with a Hermitian metric on $\Lb$.
Since locally the Hermitian line bundle $(\cL,||.||)$ over $\Pb\hat{\cM}$ (where $||.||$ is the Hodge metric) is identified with  $(\Lb_{\Pb^+(V)},h_1)$, Lemma~\ref{lm:vol:form:n:Chern:cl} is an immediate consequence of Lemma~\ref{lm:vol:form:chern:curv} here below.
\end{proof}

\begin{Lemma}\label{lm:vol:form:chern:curv}
Let  $d\mu$ be  the measure on $\Pb^+(V)$, which is the push forward of $d\vol$ defined as in \textsection\ref{sec:push:measure}.
Let $\Theta$ be the  curvature form of the Hermitian metric $h_1$ on $\Lb$ (note that $\imath\Theta$ is a real $(1,1)$-form on $\Pb^+(V)$).
Then we have
$$
d\mu=\frac{(-1)^{p-1}2\pi}{2^{p+q}(p+q)!}(\imath\Theta)^{p+q-1}.
$$
\end{Lemma}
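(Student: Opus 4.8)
The plan is to compute both sides explicitly in the affine chart around a point $[v]\in\Pb^+(V)$ introduced in \textsection\ref{sec:defvolform}, and check that they agree as $(p+q-1,p+q-1)$-forms. We already have from Lemma~\ref{lm:pushed:vol:form} that
\[
d\mu=\left(\frac{\imath}{2}\right)^{d-1}\cdot\frac{\pi}{d\,h^{d}(w)}\,dw_1\wedge d\bar w_1\wedge\dots\wedge dw_{d-1}\wedge d\bar w_{d-1},\qquad d=p+q,
\]
where $h(w)=h_1(w_1,\dots,w_{d-1},1)$. So the real content is to express $(\imath\Theta)^{d-1}$ in the same coordinates. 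First I would recall that with respect to the local holomorphic frame $\sigma(w)=(w_1,\dots,w_{d-1},1)$ of $\Lb$ (the tautological subbundle of the trivial bundle with fiber $V$), the metric is $\|\sigma(w)\|^2_{h_1}=h_1(\sigma(w),\sigma(w))=h(w)$, which is positive on $\Pb^+(V)$ by definition of the cone $C^+$. Hence the Chern curvature of $(\Lb,h_1)$ is
\[
\Theta=-\partial\bar\partial\log h(w),
\]
and $\imath\Theta=-\imath\,\partial\bar\partial\log h$ is a real $(1,1)$-form.

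The key computation is then to identify the top exterior power of $-\imath\,\partial\bar\partial\log h$. Here $h(w)$ is, up to the choice of orthonormal-ish basis made in \textsection\ref{sec:defvolform}, a Hermitian form of signature $(p-1,q)$ in the variables $w$ perturbed by the ``$+1$'' in the last slot; concretely, writing $h_1$ in the basis $(e_1,\dots,e_d)$ as a diagonal form with $p$ entries $+1$ and $q$ entries $-1$ (using the orthonormal basis for $h_1$ constructed there — note $V_0$ is trivial here so the construction simplifies), one gets $h(w)=\sum_{i}\epsilon_i|w_i|^2+\epsilon_d$ with $\epsilon_i=\pm1$. This is a standard computation: $-\imath\partial\bar\partial\log h$ of such a form is, up to the explicit constant and the sign $(-1)^{p-1}$ coming from the number of positive eigenvalues among the $w$-directions, a constant multiple of the ``Fubini–Study–like'' form whose $(d-1)$-st power is $c\cdot h(w)^{-d}\,\prod(\imath\,dw_i\wedge d\bar w_i)$. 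I would carry out this linear-algebra computation (diagonalizing the Hessian of $\log h$, or equivalently using the formula $\partial\bar\partial\log h=\frac{\partial\bar\partial h}{h}-\frac{\partial h\wedge\bar\partial h}{h^2}$ and the matrix-determinant identity $\det(A-xx^\ast/c)=(1-x^\ast A^{-1}x/c)\det A$) to extract the exact scalar $\tfrac{(-1)^{p-1}2\pi}{2^{d}d!}$ after wedging $d-1$ copies and comparing with the expression for $d\mu$ above. The sign $(-1)^{p-1}$ appears because the relevant determinant of the Hessian has sign governed by the signature of $h_1$ restricted to the hyperplane $\{z_d=1\}$ complement of the vector $v$, which contributes $p-1$ positive and $q$ negative directions.

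The main obstacle I expect is purely bookkeeping: tracking the signs and the factors of $2$ and $\imath$ through the determinant computation, in particular getting the global sign $(-1)^{p-1}$ right (it depends subtly on which coordinate $z_d$ one normalizes, and one should check independence of that choice) and confirming that the form $(\imath\Theta)^{d-1}$ is actually a positive multiple of the background volume form on $\Pb^+(V)$ precisely when $p$ is odd. A clean way to organize this and avoid chart-dependent sign errors is to first treat the definite case $q=0$ (where $\Pb^+(V)=\Pb(V)$, $h_1$ is a genuine metric, and $-\partial\bar\partial\log h$ is the Fubini–Study form, so the identity is the classical statement $\int_{\Pb^{d-1}}\omega_{FS}^{d-1}=$ a known constant, matched pointwise), and then observe that the pointwise identity of forms is insensitive to the signature because it is an algebraic identity in the entries of $h_1$ that holds on the open set where $h(w)>0$; the signature only enters through the overall sign of the determinant, which is exactly $(-1)^{p-1}$. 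This reduces the general case to the definite one plus a sign check.
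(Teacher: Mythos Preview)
Your approach is correct, and the computation you outline would go through: the matrix-determinant lemma applied to the Hessian of $\log h$ does produce $(\imath\Theta)^{d-1}=(d-1)!\,(-1)^{p-1}h(w)^{-d}\prod_i(\imath\,dw_i\wedge d\bar w_i)$, which matches $d\mu$ with the stated constant. One small point to make explicit is that when you diagonalize $h_1$ and pick the affine chart $z_d=1$, you should choose $e_d$ in the positive part (so $\epsilon_d=+1$); this is what forces the remaining $\epsilon_i$ to contribute $p-1$ pluses and $q$ minuses, giving the sign $(-1)^{p-1}$ rather than something chart-dependent.

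The paper takes a shorter route that avoids the determinant computation entirely. It observes that both $d\mu$ and $(\imath\Theta)^{d-1}$ are invariant under the action of $\Ur(p,q)\simeq\Ur(h_1)$, and that this group acts \emph{transitively} on $\Pb^+(V)$. Hence the ratio $d\mu/(\imath\Theta)^{d-1}$ is a $\Ur(p,q)$-invariant function on a homogeneous space, so it is constant, and it suffices to evaluate it at a single convenient point, namely $[1:0:\dots:0]$. At $w=0$ one has $h(0)=1$, $\partial h(0)=0$, and $\Theta(0)$ is already diagonal, so $\Theta^{d-1}(0)=(d-1)!(-1)^{p-1}\,dw_1\wedge d\bar w_1\wedge\dots$ by inspection; comparing with the value of $d\mu(0)$ from Lemma~\ref{lm:pushed:vol:form} gives the constant immediately. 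Your approach has the virtue of giving the identity pointwise in coordinates without invoking the group action, but the transitivity argument eliminates exactly the bookkeeping (the rank-one perturbation of a diagonal matrix) that you flagged as the main obstacle.
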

\begin{proof}
We can identify $V$ with $\C^{p+q}$ endowed with the Hermitian form
$$
h_1(z,z)=\sum_{i=1}^p |z_i|^2-\sum_{i=1}^q |z_{p+i}|^2,
$$
where $z=(z_1,\dots,z_{p+q})$. In these coordinates, we have
$$
d\vol=|\det(h_1)|=\left(\frac{\imath}{2}\right)^{p+q}dz_1\wedge d\bar{z}_1\dots dz_{p+q}\wedge d\bar{z}_{p+q}.
$$
Observe that $\Ur(p,q)\simeq \Ur(h_1)$ preserves $\mu$ and the curvature form $\Theta$ (since $\Ur(p,q)$ preserves the Hermitian metric $h_1$ on $\Lb$).
It follows that  $\frac{d\mu}{(\imath\Theta)^{p+q-1}}$ is a function  on $\Pb^+(V)$ invariant under the action of $\Ur(p,q)$.
Since $\Ur(p,q)$ acts transitively on $\Pb^+(V)$, $\frac{d\mu}{(\imath\Theta)^{p+q-1}}$ is actually constant.
To evaluate this constant, it suffices to compute the ratio $\frac{d\mu}{(\imath\Theta)^{p+q-1}}$ at the point $[1:0:\dots:0] \in \Pb^+(V)$.

Consider a neighborhood $B$ of $[1:0:\dots:0]$ in $\Pb^+(V)$ that can be identified with a neighborhood of $0\in \C^{p+q-1}$ by the mapping $(w_1,\dots,w_{p+q-1}) \mapsto [1:w_1:\dots:w_{p+q-1}]$.
By the computations in Lemma~\ref{lm:pushed:vol:form}, we have
$$
d\mu(0)=\left(\frac{\imath}{2}\right)^{p+q-1}\cdot\frac{\pi}{(p+q)}dw_1\wedge d\bar{w}_1\wedge\dots\wedge dw_{p+q-1}\wedge d\bar{w}_{p+q-1}.
$$
To compute $\Theta$, we will use the following section of $\Lb$ over $B$
$$
\begin{array}{cccc}
  \sigma: & B & \ra & \C^{p+q} \\
     & w=(w_1,\dots,w_{p+q-1}) & \mapsto & (1,w_1,\dots,w_{p+q-1}) \\
\end{array}.
$$
Set $h(w):=h_1(\sigma(w),\sigma(w))$, we then have
\begin{eqnarray*}
  \Theta(w) &= &  -\partial\bar{\partial}\log(h(w)) \\
    & = &-\frac{\sum_{i=1}^{p-1}dw_i\wedge d\bar{w}_i-\sum_{i=p}^{p+q-1}dw_i\wedge d\bar{w}_i}{h(w)} + \frac{\partial h(w)\wedge\bar{\partial}h(w)}{h^2(w)}
\end{eqnarray*}
Thus
$$
\Theta(0)=-\sum_{i=1}^{p-1}dw_i\wedge d\bar{w}_i+\sum_{i=p}^{p+q-1}dw_i \wedge d\bar{w}_i.
$$
and
$$
\Theta^{p+q-1}(0)=(p+q-1)!(-1)^{p-1}dw_1\wedge d\bar{w}_1\wedge\dots\wedge dw_{p+q-1}\wedge d\bar{w}_{p+q-1}.
$$
Therefore
$$
\frac{d\mu}{(\imath\Theta)^{p+q-1}}= \frac{\displaystyle  \left(\frac{\imath}{2}\right)^{p+q-1}\cdot\frac{\pi}{(p+q)}}{\imath^{p+q-1} (p+q-1)!(-1)^{p-1}} = \frac{(-1)^{p-1}2\pi}{2^{p+q}(p+q)!}.
$$
\end{proof}

\begin{Remark}\label{rk:push:meas:n:curvature}
We remind the reader that our volume form  $d\vol$ is defined only by using $h_1$.
In fact, if $d\hat{\nu}$ is any volume form on $V$ that is proportional to the Lebesgue measure, then the push forward measure on $\Pb^+(V)$, denoted by $d\nu$, is proportional to $\Theta^{p+q-1}$ by the invariance under the action of $\Ur(h_1)=\Ur(p,q)$.
In the cases where $d\hat{\nu}$ is the Masur-Veech volume form on minimal strata of Abelian differentials $\cH(2g-2)$, or on strata of quadratic differentials with only zeros of odd order, the ratio $\frac{d\nu}{(\imath\Theta)^{p+q-1}}$ has been computed in \cite{Sau18} and \cite{CMS19}.
See also \cite[Lem.5.1]{Sau20} for a related calculation.
\end{Remark}

\section{Computation of the volume in terms of characteristic classes}\label{sec:rational:vol}
Our goal now is to show
\begin{Theorem}\label{th:vol:rational}
Let $\cM$ be a {\bf polarized absolutely rigid} linear submanifold of some stratum $\cS$ of translation surfaces.
Then the volume of $\Pb\cM$ with respect to $\mu$  satisfies
$$
\mu(\Pb\cM)\in \Q\pi^{\dim\cM}.
$$
\end{Theorem}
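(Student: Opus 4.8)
The strategy is to integrate the pointwise formula of Lemma~\ref{lm:vol:form:n:Chern:cl} and read the answer off as an intersection number. Since $\Pb\hat\cM\to\Pb\cM$ has finite degree and $\Pb\cM$ is the quotient of $\Pb\hat\cM$ by the finite group $\Gamma$, the number $\mu(\Pb\cM)$ is a positive rational multiple of $\int_{\Pb\hat\cM}d\mu$; and by Lemma~\ref{lm:vol:form:n:Chern:cl} one has $\int_{\Pb\hat\cM}d\mu=\frac{(-1)^{p-1}2\pi}{2^{d}d!}\int_{\Pb\hat\cM}(\imath\Theta)^{d-1}$, where $\Theta$ is the curvature of the Hodge metric $\|\cdot\|$ on the tautological line bundle $\cL$ over $\Pb\hat\cM$ and $d=\dim\cM$ (so $\dim_\C\Pb\hat\cM=d-1$). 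Hence it suffices to show $\int_{\Pb\hat\cM}(\imath\Theta)^{d-1}\in(2\pi)^{d-1}\Q$, for then $\mu(\Pb\cM)\in\Q\cdot 2\pi\cdot(2\pi)^{d-1}=\Q\pi^{d}$.

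The next step is to exhibit $\cL$ as a holomorphic line subbundle of a variation of Hodge structure with $\|\cdot\|$ induced by the polarization. The tautological section being a nonzero holomorphic $1$-form, $\pp$ embeds $\cL$ as a holomorphic line subbundle of $(\cH^1_\C)_{|\Pb\hat\cM}$ lying in $F^{1}$, the lowest Hodge filtration piece of the weight-$1$ polarized variation of Hodge structure $\cH^1$; and $\|\omega\|^2=h_1(\omega,\omega)$ is, by the very definition of $d\mu$, the restriction to $\cL$ of the polarization form $h_1$, which is positive on $F^1$. (Here absolute rigidity and the polarization hypothesis have already been used, via Lemma~\ref{lm:vol:form:n:Chern:cl}.)

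Now I would pass to a compactification. Fix a smooth projective compactification of $\Pb\hat\cM$ with normal crossing boundary. After replacing $\Pb\hat\cM$ by a finite \'etale cover so that the local monodromies of $\cH^1$ around the boundary are unipotent (this only multiplies $\int(\imath\Theta)^{d-1}$ by the degree, a positive integer, so it does not affect the conclusion), and after a further sequence of blow-ups resolving the indeterminacy of the rational map into $\Pb$ of the Deligne canonical extension $\overline{\cH^1}$ determined on the interior by the line $\cL$, one obtains a smooth projective variety $X$ with normal crossing boundary $D$ over which $\cL$ extends to a genuine line \emph{subbundle} $\overline{\cL}$ of $\overline{\cH^1}$ (take the saturation of $\cL$ in $\overline{\cH^1}$ and resolve; the canonical extension is compatible with such birational modifications). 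At this point Koll\'ar's theorem \cite{Kollar87} applies: because $\|\cdot\|$ is the Hodge metric of the variation of Hodge structure and $\overline{\cL}$ is the line subbundle of the canonical extension generated by $\cL$, the metric $\|\cdot\|$ is good in the sense of Mumford on $\overline{\cL}$, so $\imath\Theta$ extends across $D$ as a closed $L^1$ current representing $2\pi c_1(\overline{\cL})$ in $H^2(X,\R)$ and — this being the key regularity point — its $(d-1)$-st power is integrable and represents $(2\pi)^{d-1}c_1(\overline{\cL})^{d-1}$. Therefore $\int_{\Pb\hat\cM}(\imath\Theta)^{d-1}=(2\pi)^{d-1}\int_X c_1(\overline{\cL})^{d-1}=(2\pi)^{d-1}\bigl(\overline{\cL}^{\,d-1}\cdot[X]\bigr)$ is $(2\pi)^{d-1}$ times an integer, and the first paragraph finishes the proof.

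The main obstacle is the third paragraph: arranging, through a controlled finite cover and a birational modification, that the tautological line bundle extends \emph{as a subbundle} of the canonical extension of the ambient variation of Hodge structure, and checking that Koll\'ar's good-metric formalism then genuinely computes every power of its first Chern class as a current on $X$ — concretely, controlling the asymptotics (both upper and lower bounds, including for the Gauss--Manin connection) of the Hodge norm of the tautological section near $D$. The remaining steps — the volume-form reduction of the first paragraph and the final intersection-number identity — are then formal.
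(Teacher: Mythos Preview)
Your proposal is correct and follows essentially the same route as the paper's own proof: reduce via Lemma~\ref{lm:vol:form:n:Chern:cl} to $\int_{\Pb\hat\cM}(\imath\Theta)^{d-1}$, pass to a finite cover with unipotent boundary monodromy, modify a normal-crossing compactification so that $\cL$ extends to a genuine line subbundle of the Deligne canonical extension of $\cH^1_\C$, and then invoke Koll\'ar's theorem~\cite{Kollar87} to identify the integral with $(2\pi)^{d-1}$ times an intersection number. The paper carries out the extension step by taking the closure of the tautological section in $\Pb\overline{\cH^1_\C}$ and desingularizing (its Claim~\ref{cl:vol:Chern:cls}), which is equivalent to your ``saturate and blow up'' description; the only cosmetic difference is that the paper cites Kawamata~\cite{Kaw81} for the ramified cover making the monodromy unipotent, whereas you phrase it as a finite \'etale cover of the open part.
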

\begin{proof}
Let $\hat{\cM}, \Pb\hat{\cM}$ be as in \textsection\ref{subsec:linsubrev}, and $\cV_0, \cV$ and $\cV_1$ be the local systems on $\Pb\hat\cM$ defined in \textsection\ref{subsec:linsubrev}.
That $\cM$ is absolutely rigid and polarized means that $\cV_0=\{0\}$, hence $\cV$ and $\cV_1$ are isomorphic, and the restriction of  $h_1$ to $\cV_1$ is non-degenerate. Let $d=\dim\cM=\mathrm{rk}(\cV_1)$, and $(p,d-p)$ be the signature of the restriction of $h_1$ to the fibers of $\cV_1$.  As a consequence, by Lemma~\ref{lm:vol:form:n:Chern:cl}, we have
$$
\mu(\Pb\hat{\cM}):=\int_{\Pb\hat{\cM}}d\mu=\frac{(-1)^{p-1}2\pi}{2^dd!}\int_{\Pb\hat{\cM}}(\imath\Theta)^{d-1},
$$
where $\Theta$ is the curvature of the Hodge metric $\|.\|$ on $\cL$. If there is a compact complex manifold $X$ together with a normal crossing divisor $\partial X$ such that
\begin{itemize}
\item $\Pb\hat{\cM}\simeq X\setminus \partial X$,

\item $\cL$ extends to a holomorphic line bundle $\bar{\cL}$ on $X$, and

\item $(\frac{\imath}{2\pi}\Theta)^{d-1}$ is a representative of $c_1^{d-1}(\bar{\cL})$ in the sense of currents,
\end{itemize}
then
$$
\mu(\Pb\hat{\cM})=\frac{(-1)^{p-1}}{2^dd!}(2\pi)^{d}c_1^{d-1}(\bar{\cL}) \in \Q\cdot\pi^d,
$$
since $c_1^{d-1}(\bar{\cL}) \in H^{2(d-1)}(X,\Z)$.
Unfortunately, the existence of such a compact manifold has not been proven in general.
Nevertheless, we have
\begin{Claim}\label{cl:vol:Chern:cls}
There is a compact complex manifold $\hat{Y}$ together with a normal crossing divisor $\partial \hat{Y}$ and a line bundle $\hat{\cL}\ra \hat{Y}$ such that
\begin{itemize}
\item  there is a finite covering $\hat{q}: \hat{Y}\setminus\partial \hat{Y} \ra \Pb\hat{\cM}$,

\item  $\hat{q}^*(\frac{\imath}{2\pi}\Theta)^{d-1}$ is a representative of $c^{d-1}_1(\hat{\cL})$ in the sense of currents.
\end{itemize}
\end{Claim}
\begin{proof}[Proof of the claim]
Recall that over $\Pb\hat{\cM}$, we have a VHS $\{\cH^1, F\}$, where $F$ is the holomorphic subbundle of $\cH^1_{\C}=\cH^1\otimes_{\Z}\C$ whose fiber over a point $(C,\omg)$ is $H^{1,0}(C)$. By definition, the tautological line bundle $\cL$ is a subbundle of $F$. Moreover the Hermitian metric $||.||$ on $\cL$ is precisely the restriction of the Hodge metric of $\cH^1$ to $\cL$.

Let $\ol{X}$ be a smooth compactification of $\Pb\hat{\cM}$ with normal crossing boundary divisor $\partial \ol{X}$. It is a well known fact that the monodromy of $\cH^1$ about each component of $\partial \ol{X}$ is quasi-unipotent. Thus there is a finite covering $q: \ol{Y} \ra \ol{X}$ ramified over $\partial \ol{X}$ such that the pullback of $\cH^1$ to $Y:=\ol{Y}\setminus \partial\ol{Y}$ has unipotent monodromies about $\partial \ol{Y}:=q^{-1}(\partial \ol{X})$ (see \cite[Th.17]{Kaw81} and \cite[Lem.3.5]{Bru18}).  It follows from the work of Deligne~\cite{Del70} and Schmid~\cite{Sch73} that the pullback of the filtration $\{\cH^1_{\C},F\}$ to $Y$ extends canonically to a filtration $\{\ol{\cH}^1_{\C}, \ol{F}\}$ of holomorphic vector bundles over $\ol{Y}$.
However, the line bundle $\cL$ does not necessarily extends to $\ol{Y}$.
To fix this issue, we construct a modification of $\ol{Y}$ as follows: the pullback of $\cL$ to $Y$ provides us with a section $\sigma$ of the projective bundle $\Pb\cH^1_{\C}$ over $Y$. The closure $\ol{Y}'$ of $\sigma(Y)$  in $\Pb\ol{\cH}^1_{\C}$ is a modification of $\ol{Y}$. Let $\partial \ol{Y}'$ be the preimage of $\partial \ol{Y}$ in $\ol{Y}'$. Note that $\partial\ol{Y}'$ is a divisor of $\ol{Y}'$, and $\ol{Y}'\setminus\partial\ol{Y}'\simeq Y$. The variety $\ol{Y}'$ is not necessarily smooth. Let $(\hat{Y},\partial\hat{Y})$ denote the desingularization of the pair $(\ol{Y}',\partial \ol{Y}')$. By construction, (the pullback of) the filtration $\{\ol{\cH}^1_{\C}, \ol{F}\}$ on $\hat{Y}$ is the canonical extension of the VHS $\{\cH^1,F\}$ on $\hat{Y}\setminus\partial\hat{Y}$. Moreover, the pullback $\hat{\cL}$ of the tautological line bundle $\Ocal(-1)_{\Pb\ol{\cH}^1_{\C}}$ to $\hat{Y}$ is clearly a line subbundle of the pullback of $\ol{\cH}^1_{\C}$.
The restriction of $\hat{\cL}$ to $\hat{Y}\setminus\partial\hat{Y}\simeq Y$ is isomorphic to (the pullback of) $\cL$.
We are now  in position to apply \cite[Theorem 5.1 and Remark 5.19]{Kollar87}, which asserts that any power of the curvature form of $(\hat{\cL}_{|Y}, ||.||)$ is a representative of the corresponding  polynomial in $c_1(\hat{\cL})$ in the sense of currents. This completes the proof of the claim.
\end{proof}

By the claim, let $N$ be the degree of $\hat{q}$, we have
$$
\mu(\Pb\hat{\cM})=\frac{1}{N}\cdot\frac{(-1)^{p-1}}{2^d d!}(2\pi)^d\int_{\hat{Y}\setminus\partial\hat{Y}}\left(\frac{\imath}{2\pi}\Theta\right)^{d-1} = \frac{1}{N}\cdot\frac{(-1)^{p-1}}{2^d d!}(2\pi)^d c_1^{d-1}(\hat{\cL})\in \Q\cdot\pi^d.
$$
Since $\Pb\hat{\cM}$ is a finite cover of $\Pb\cM$, the theorem follows.
\end{proof}

\begin{Remark}\label{rk:compare:ext:line:bdl}
Denote by $\Omega\Mcal_{g,n}$ the pullback of the Hodge bundle $\Omega\Mcal_g$ to $\Mcal_{g,n}$.
Let $\Pb\ol{\cM}$ denote the closure of $\Pb\Mcal$ (more precisely, the closure of its pre-image) in $\Pb\Omega\Mcal_{g,n}$.
This closure is called the {\em incidence variety compactification} of $\Pb\cM$ (cf. \cite{BCGGM:abel}).
By construction, the line bundle $\Lcal$ is precisely the restriction of the tautological line bundle $\Ocal(-1)_{\Pb\Omega\Mcal_{g,n}}$ to $\Pb\cM$.
Thus $\Ocal(-1)_{|\Pb\ol{\Mcal}}$ is an obvious extension of $\Lcal$ to $\Pb\ol{\Mcal}$.
There is a surjective birational morphism from the space  $\hat{Y}$ in Claim~\ref{cl:vol:Chern:cls} onto $\Pb\ol{\Mcal}$.
A natural question one may ask is whether $\hat{\Lcal}$ is  isomorphic to the pullback of $\Ocal(-1)$ to $\hat{Y}$.
If this is true, it would simplify the computation of the integral $\int_{\hat{Y}}c_1^{d-1}(\hat{\Lcal})$, which is equal to the self-intersection number of the divisor associated to $\hat{\Lcal}$.
It seems to us that this should be the case, but we do not have a proof of this fact.
\end{Remark}


\section{Proof of Theorem~\ref{th:aff:subman:count}}\label{sec:proof:theorem2}
Let $\Mcal$ now be an arithmetic affine submanifold of dimension $d$ in some stratum $\trate$.
By definition, $\Mcal$ is locally defined by linear equations with rational coefficients in period coordinates.
By a result of \cite{AEM17},  $\Mcal$ is always polarized.

Let $(C,\omega)$ be a surface in $\Mcal$. As usual, denote by $Z(\omega)$ the zero set of $\omega$.
We identify $H^1(C,Z(\omg);\C)$ with $\C^{2g+n-1}$ using a basis of $H_1(C,Z(\omg);\Z)$.
The image of a neighborhood of $(C,\omega)$ in $\Mcal$ is an open subset of a linear subspace $V\subset \C^{2g+n-1}$ which is defined over $\Q$.
It follows that $\Lambda^\Z_V:=V\cap (\Z\oplus\imath\Z)^{2g+n-1}$ is a lattice in $V$.
Note that the square-tiled surfaces in $\cM$ and close to $(C,\omg)$ are mapped to points in $\Lambda^\Z_V$.

There is a unique volume form on $V$ which is proportional to the Lebesgue measure such that the covolume of $\Lambda^\Z_V$ is equal to one.
This volume form gives rise to a well defined volume form on $\Mcal$, which will be called the Masur-Veech volume form and denoted by $d\vol^*$.
The volume form $d\vol^*$ is important to us because of the following folkloric lemma (see~\cite[Prop. 1.6]{EO01}).

\begin{Lemma}\label{lm:MV:vol:count}
Let $\Mcal_{\leq 1}$ denote the set  of $(C,\omega)\in \Mcal$ such that $||\omega||^2\leq 1$. Then we have
$$
\lim_{m\ra \infty}\frac{\#\ST(\Mcal,m)}{m^d}=\vol^*(\Mcal_{\leq 1}).
$$
\end{Lemma}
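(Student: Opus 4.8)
The statement is a lattice-point counting asymptotic, and the natural approach is the standard one for such results: fix a surface $(C,\omega)\in\Mcal$ and a period chart identifying a neighborhood with an open subset of the $\Q$-defined linear subspace $V\subset\C^{2g+n-1}$, so that square-tiled surfaces in $\Mcal$ correspond locally to points of the lattice $\Lambda^\Z_V=V\cap(\Z\oplus\imath\Z)^{2g+n-1}$. A square-tiled surface made of at most $m$ unit squares has area at most $m$, i.e. satisfies $\|\omega\|^2\leq m$; conversely, a lattice point in period coordinates with $\|\omega\|^2\leq m$ gives, after clearing denominators coming from the finitely many charts, a square-tiled surface built from a bounded multiple of $m$ squares. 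Scaling the lattice by $m^{-1/2}$ (which is the $\C^*$-action, and is compatible with all the structures since $\|\cdot\|^2$ is homogeneous of degree $2$ in $\omega$), the count of such lattice points is $\#\bigl(\tfrac{1}{\sqrt m}\Lambda^\Z_V\cap \Mcal_{\leq 1}\bigr)$ read in the chart, which by the classical equidistribution of scaled lattice points (a Riemann-sum argument, using that $\mathrm{covol}(\Lambda^\Z_V)=1$ by the normalization defining $d\vol^*$) converges to $\vol^*(\Mcal_{\leq 1})$ after multiplying by the scaling factor $m^{-d}$ coming from $\dim_\R V = 2d$.

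\textbf{Key steps, in order.} First, I would set up the local picture precisely: cover $\Mcal_{\leq 1}$ (or rather a relatively compact exhaustion of it) by finitely many period charts $\phi_\alpha$, each mapping into a $\Q$-linear subspace $V_\alpha$ with its integral lattice $\Lambda^\Z_{V_\alpha}$, and note that transition maps lie in $\GL(\mathbb Z)$ so the lattices and the volume forms $d\vol^*$ glue. Second, I would make the dictionary between square-tiled surfaces and lattice points explicit: a branched cover of $\Tbb$ of degree $N$ has area $N$, the holonomy vector of every relative cycle is a Gaussian integer, and conversely an integral point in period coordinates defines a translation surface whose periods generate a sub-lattice of $\Z\oplus\imath\Z$ of finite index, hence a square-tiled surface; the number of squares is controlled by the area up to a universal constant depending on the (finite) data of the chart. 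Third, I would localize the counting: by an orbifold partition of unity the global count $\#\ST(\Mcal,m)$ differs from $\sum_\alpha \#\{v\in\Lambda^\Z_{V_\alpha}: \phi_\alpha^{-1}(v)\in \text{piece}_\alpha,\ \|v\|^2\leq m\}$ by a lower-order error (orbifold stabilizers and overlaps contribute $O(m^{d-1})$). Fourth, I would apply the scaled-lattice-point principle: for a Jordan-measurable set $K$ of finite volume in a real vector space and a lattice $L$ of covolume $1$, $\#(L\cap tK)\sim t^{\dim_\R}\vol(K)$ as $t\to\infty$ — here $t=\sqrt m$, $\dim_\R V = 2d$, and $t^{2d}=m^d$, giving the claimed limit.

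\textbf{Main obstacle.} The genuinely delicate point is not the lattice-point asymptotic itself (which is elementary) but controlling the geometry of $\Mcal_{\leq 1}$ near its boundary and near infinity: the region $\{\|\omega\|^2\leq 1\}$ in $\Mcal$ is non-compact (its closure touches the boundary of the stratum where surfaces degenerate), so one must check that its volume $\vol^*(\Mcal_{\leq 1})$ is finite \emph{and} that the lattice-point count does not receive anomalous contributions from thin parts of the space — this is exactly the content for which one invokes the finiteness of the Masur-Veech volume of affine invariant submanifolds (Eskin-Mirzakhani, and for arithmetic $\Mcal$ the integrality/algebraicity making $\Lambda^\Z_V$ a genuine lattice). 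I would cite this finiteness and the standard cutoff argument (exhaust by compacta, bound the tail count by the tail volume using the homogeneity of $\|\cdot\|^2$ under the $\C^*$-action) rather than reprove it; since the lemma is attributed to \cite{EO01} as folkloric, a short proof along these lines, emphasizing the arithmeticity input that makes the relevant subgroup a lattice, is all that is needed.
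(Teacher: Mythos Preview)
Your proposal is correct and follows essentially the same approach as the paper's sketch: both reduce to the classical scaled-lattice-point asymptotic $\#(L\cap tK)\sim t^{2d}\vol^*(K)$ applied in period charts, using the normalization $\mathrm{covol}(\Lambda^\Z_V)=1$ and the homogeneity $\|\lambda\omega\|^2=|\lambda|^2\|\omega\|^2$ to match the scaling $t=\sqrt{m}$. The only cosmetic difference is that the paper decomposes the unit-area hypersurface $\cM_1$ into a countable family of compact pieces $U_i$ and works with the cones $C_{\leq 1}(U_i)\subset\cM_{\leq 1}$, whereas you cover $\cM_{\leq 1}$ directly via a compact exhaustion; both handle the non-compactness issue by the same tail/cutoff argument you describe.
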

\begin{proof}[Sketch of proof]
Let $\cM_1$ denote the set of surfaces in $\cM$ with area equal to $1$. Note that $\cM_1$ is an orbifold of real dimension $2d-1$.
Since $\cM_1$ is paracompact, there is a countable family $\{U_i\}_{i\in \N}$ of compact subsets of $\cM_1$ such that
\begin{itemize}
\item[$\bullet$] $\cM_1=\bigcup_{i\in \N}U_i$,

\item[$\bullet$] each $U_i$ is contained in the image of a local chart by some period mapping, and the boundary of $U_i$ is $C^1$-piecewise,

\item[$\bullet$]  $\inter(U_i)\cap\inter(U_j)=\varnothing$ if $i\neq j$.
\end{itemize}
We can identify $U_i$ with a subset of a linear subspace $V\subset \C^{2g+n-1}$ as above.
Let $C(U_i)$ be the infinite cone $\R^*_+\cdot U_i \subset V$.
Let $\ST(U_i,m)$ be the set of square-tiled surfaces of area at most $m$ whose image is contained in $C(U_i)$.
For all $s >0$, denote by $C_s(U_i)$ the set $\{\eta \in C(U_i), \; 0< h_1(\eta,\eta) \leq s\}$.
By definition
$$
\#\ST(U_i,m)=\#C_m(U_i)\cap\Lambda^\Z_V.
$$
Let $\Delta$ be a fundamental domain for the action of $\Lambda^\Z_V$ in $V$.
We can suppose that $\Delta$ contains $0$ in its interior.
Set
 $$
 W(U_i,m)=\bigcup_{\eta\in\Lambda^{\Z}_V\cap C_{m}(U_i)}(\Delta+\eta) \subset V.
 $$
 By definition, $\vol^*(\Delta)=1$, thus $\vol^*(W(U_i,m))=\#\ST(U_i,m)$.
 We now remark that $\frac{1}{\sqrt{m}}\cdot W(U_i,m)$ approaches $C_1(U_i)$ as $m \ra +\infty$. Thus
 $$
 \lim_{m \ra +\infty}\vol^*(\frac{1}{\sqrt{m}}\cdot W(U_i,m))= \vol^*(C_{1}(U_i)).
 $$
 which implies
 $$
 \lim_{m\ra +\infty}\frac{\#\ST(U_i,m)}{(\sqrt{m})^{2d}}=\vol^*(C_1(U_i)).
 $$
 Summing up over the family $\{U_i, i \in \N\}$ we get the desired conclusion.
\end{proof}

The Masur-Veech volume form on $\cM$ is not necessarily equal to the volume form $d\vol$ defined in \textsection\ref{sec:push:measure}. However, we have

\begin{Proposition}~\label{prop:compare:meas:arith}
The ratio $\displaystyle \frac{d\vol^*}{d\vol}$ is a rational constant on $\cM$.
\end{Proposition}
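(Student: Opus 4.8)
The plan is to show that $d\vol^*/d\vol$ is locally constant, to identify the constant in a single period chart, and to read off its rationality from the integrality of the cohomology lattice together with that of the intersection form. Both $d\vol^*$ and $d\vol$ are globally defined volume forms on $\cM$, which is connected (it is a $\GL^+(2,\R)$-orbit closure), so $d\vol^*/d\vol$ is a positive function on $\cM$ and it is enough to show it is the same rational number in every chart. In a period chart modelled on a $\Q$-defined subspace $V\subset H^1(C,Z(\omega);\C)$, both forms have constant coefficients with respect to the linear coordinates on $V$: $d\vol^*$ by definition, and $d\vol$ because the Hermitian forms $h_0,h_1$ used to define it are flat. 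Since changes of period coordinates are integral linear isomorphisms, they preserve simultaneously the lattice $(\Z\oplus\imath\Z)^{2g+n-1}$ (hence $d\vol^*$) and the forms $h_0,h_1$ (hence $d\vol$), so the constant $d\vol^*/d\vol$ is chart-independent.

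To compute it I would work in one chart. As $\cM$ is absolutely rigid we have $V_0=\{0\}$ and $\pp$ restricts to an isomorphism $V\cong V_1\subset H^1(C,\C)$; as $\cM$ is arithmetic it is polarized by \cite{AEM17}, so $h_1$ is non-degenerate on $V_1$. The lattice $\Lambda:=V\cap(\Z\oplus\imath\Z)^{2g+n-1}=V\cap H^1(C,Z(\omega);\Z[\imath])$ is a free $\Z[\imath]$-module of rank $d$; choosing a $\Z[\imath]$-basis $(e_1,\dots,e_d)$ and letting $(z_1,\dots,z_d)$ be the dual linear coordinates, one has $d\vol^*=\bigl(\frac{\imath}{2}\bigr)^d dz_1\wedge d\bar z_1\wedge\dots\wedge dz_d\wedge d\bar z_d$. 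Unwinding the definition of $d\vol$ in \textsection\ref{sec:push:measure} with $V_0=\{0\}$ — the element $|\det(h_1(e_i,e_j))|^{1/2}\,dz_1\wedge\dots\wedge dz_d$ has norm $1$ for $(\det h_1)^\vee$, and both volume forms have the shape $(\frac{\imath}{2})^d\sigma\wedge\bar\sigma$, so no spurious factor (in particular no power of $\pi$) appears — one gets $d\vol=|\det(h_1(e_i,e_j))_{1\le i,j\le d}|\cdot d\vol^*$, i.e.
\[ \frac{d\vol^*}{d\vol}=\frac{1}{\bigl|\det\bigl(h_1(e_i,e_j)\bigr)_{1\le i,j\le d}\bigr|}, \]
so everything reduces to checking $\det(h_1(e_i,e_j))\in\Q\setminus\{0\}$.

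For this I would transport the basis through $\pp$: since $H^1(C,Z(\omega);\Z)\to H^1(C,\Z)$ is defined over $\Z$, each $f_i:=\pp(e_i)$ lies in $H^1(C,\Z[\imath])$, say $f_i=\alpha_i+\imath\beta_i$ with $\alpha_i,\beta_i\in H^1(C,\Z)$. Writing $h_1(u,v)=\frac{\imath}{2}\langle u,\bar v\rangle$ with $\langle\cdot,\cdot\rangle$ the cup-product pairing — integer-valued on $H^1(C,\Z)$ — and expanding, one finds $h_1(f_i,f_j)\in\frac12\Z[\imath]$, hence $\det(h_1(f_i,f_j))\in\frac1{2^d}\Z[\imath]$; this determinant is real, being the Gram determinant of a Hermitian form, and non-zero since $h_1$ is non-degenerate on $V_1$ and $(f_i)$ is a basis of $V_1$, so it lies in $\frac1{2^d}\Z\setminus\{0\}$. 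Since $\det(h_1(e_i,e_j))=\det(h_1(f_i,f_j))$, we conclude $d\vol^*/d\vol=|\det(h_1(e_i,e_j))|^{-1}\in\Q_{>0}$, a constant on $\cM$ by the first paragraph. (If $\cM$ is only assumed arithmetic, the same argument works once one observes that the flat positive form $h_0$ on $V_0$, being the quotient of the integral polarizations on $H^0(Z)$ and $H^0(C)$, is rational on the integral lattice, so its Gram determinant on a lattice basis is again a non-zero rational.) The only delicate point is the bookkeeping of the two volume-form normalizations in the second paragraph, together with the observation that the integrality responsible for rationality becomes visible only after pushing the lattice basis into \emph{absolute} cohomology $H^1(C,\Z[\imath])$ via $\pp$ and invoking the integrality of the intersection form; the rest is routine linear algebra.
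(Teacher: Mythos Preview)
Your argument is correct and arrives at the same formula $d\vol^*/d\vol=|\det(h_1(e_i,e_j))|^{-1}$ as the paper, but the way you extract rationality differs. The paper projects $V$ onto a coordinate $\C^d$ via a subset $I$ of the integral coordinates; the inverse $\psi:\C^d\to V$ has rational entries because $V$ is defined over $\Q$, and the pullback $\psi^*h_1$ then has a rational Gram matrix $(\gamma_{ij})$ simply because $h_1$ is rational on the ambient integral basis. The Masur--Veech form is compared to the standard Lebesgue form on $\C^d$ via the index $r=[(\Z\oplus\imath\Z)^d:\phi_I(\Lambda_V^\Z)]$. Your route instead fixes a $\Z[\imath]$-basis of the lattice $\Lambda_V^\Z$ itself, pushes it down to $H^1(C,\Z[\imath])$ via $\pp$, and reads rationality (in fact membership in $\tfrac{1}{2^d}\Z$) directly from integrality of the cup product; no auxiliary projection or index computation is needed. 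This is slightly cleaner and yields a marginally sharper statement, at the cost of invoking absolute rigidity (and the polarized property from \cite{AEM17}) up front so that $(\pp(e_i))$ is genuinely a basis of $V_1$ --- the paper's computation also tacitly uses $V_0=\{0\}$ when it identifies $d\vol$ with $|\det(\gamma_{ij})|$ times Lebesgue, so the two proofs have the same effective scope. Your parenthetical extension to the non-rigid case is plausible but would require choosing the lattice basis adapted to the filtration $V_0\subset V$, which you have not spelled out.
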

\begin{proof}
By construction, $\frac{d\vol^*}{d\vol}$ is locally constant. Since $\cM$ is irreducible, it follows that $d\vol^*=\alpha d\vol$ where $\alpha$ is constant on $\cM$. It is enough to show that $\alpha$ is rational in some local chart of $\cM$.

Let $(C,\omega)$ be an element of $\cM$. We fix a basis of $H_1(X,Z(\omega),\Z)$ and identify $H^1(X,Z(\omega);\C)$ with $\C^{2g+n-1}$ using this basis.
Let $z=(z_1,\dots, z_{2g+n-1})$ be the coordinates of $\C^{2g+n-1}$.
Up to a renumbering of these coordinates, the Hermitian form $h_1$ is given by
$$
h_1(z,z)=\frac{\imath}{2}\sum_{j=1}^g(z_j\bar{z}_{g+j}-\bar{z}_jz_{g+j}).
$$
A neighborhood of $(C,\omega)$ in $\cM$ is identified with an open subset of a linear subspace $V\subset \C^{2g+n-1}$ defined over $\Q$.
For any sequence $I=(i_1,\dots,i_r)$, where $i_j\in \{1,\dots,2g+n-1\}$, and $i_j\neq i_{j'}$ if $j\neq j'$, define
$$
\begin{array}{cccc}
\phi_I:& \C^{2g+n-1} & \ra & \C^{I}\\
    & (z_1,\dots,z_{2g+n-1}) & \mapsto & (z_{i_1},\dots,z_{i_r}).
\end{array}
$$
Since $\dim_\C V=d$, there is a sequence $I=(i_1,\dots,i_d)$ such that the projection $\phi_I$ restricts to an isomorphism from $V$ onto $\C^{d}$.

Let $w=(w_1,\dots,w_d)$ be the coordinates of $\C^d$. The inverse of the map $\phi_{I|V}: V \ra \C^d$ is an injective linear map $\psi: \C^d \ra \C^{2g+n-1}$, such that $\psi(\C^d)=V$, and $\phi_I\circ\psi =\id_{\C^d}$. Since $V$ is defined over $\Q$, the matrix of $\psi$ in the canonical bases of $\C^d$ and $\C^{2g+n-1}$ has rational entries. This means that if $\psi(w_1,\dots,w_d)=(z_1,\dots,z_{2g+n-1})$ then $z_j$ is a linear function of $w$ with rational coefficients.
As a consequence, the pullback of the Hermitian form $h_1$ to $\C^{d}$ is given by
$$
\psi^*h_1(w,w)=\frac{\imath}{2}\sum_{1\leq i,j\leq d}\gamma_{ij}w_i\bar{w}_j
$$
where $\gamma_{ij}\in \Q$, and $\gamma_{ij}=-\gamma_{ji}$.
By definition, we have
$$
\psi^*d\vol=\left(\frac{\imath}{2}\right)^d\left|\det(\gamma_{ij})\right|dw_1\wedge\bar{w}_1\wedge\dots\wedge dw_d\wedge\bar{w}_d.
$$

Let $\Lambda^\Z_V$ be the intersection $V\cap(\Z\oplus\imath\Z)^{2g+n-1}$. Since $\Lambda^\Z_V$ is a lattice of $V$, $\phi_I(\Lambda^\Z_V)$ is a lattice of $\C^d$. Remark that $\phi_I(\Lambda^\Z_V)$ is a sublattice of $(\Z\oplus\imath\Z)^d$. Let $r$ be the index of $\phi_I(\Lambda^\Z_V)$ in  $(\Z\oplus\imath\Z)^d$. Observe that the covolume of $(\Z\oplus\imath\Z)^d$ is $1$ with respect to the volume form $\left(\frac{\imath}{2}\right)^ddw_1d\bar{w}_1\dots dw_d d\bar{w}_d$. Thus the covolume of $\phi_I(\Lambda^\Z_V)$ is $r$ with respect to this volume form.
Now, by definition, $\Lambda^\Z_V$ has covolume $1$ with respect to the Masur-Veech volume form $d\vol^*$. Thus $\phi_I(\Lambda^\Z_V)$ has covolume form $1$ with respect to $\psi^*d\vol^*$. This means that
$$
\psi^*d\vol^*=\frac{1}{r}\left(\frac{\imath}{2}\right)^ddw_1d\bar{w}_1\dots dw_d d\bar{w}_d.
$$
Since $\det(\gamma_{ij}) \in \Q$, we have $\frac{\psi^*d\vol^*}{\psi^*d\vol} \in \Q$, and the proposition follows.
\end{proof}

\begin{proof}[Proof of Theorem~\ref{th:aff:subman:count}]
Let $\mu^*$ be the push forward of the Masur-Veech volume form by the projection $\proj_{|\cM_{\leq 1}}: \cM_{\leq 1} \ra \Pb\cM$.
By the arguments of Lemma~\ref{lm:pushed:vol:form}, we see that $\mu^*$ is actually given by a volume form $d\mu^*$ on $\Pb\cM$.
Since $\frac{d\vol^*}{d\vol}=\frac{d\mu^*}{d\mu}$, by Lemma~\ref{lm:MV:vol:count}, we get
$$
\lim_{m\ra \infty}\frac{\#\ST(\cM,m)}{m^d}=\vol^*(\cM_{\leq1}):=\mu^*(\Pb\cM)=\alpha\mu(\Pb\cM).
$$
where $\alpha$ is a rational constant by Proposition~\ref{prop:compare:meas:arith}. By assumption, $\cM$ is a absolutely rigid. It follows from a result of \cite{AEM17} that $\cM$ is polarized. Thus Theorem~\ref{th:vol:rational} allows  us to conclude.
\end{proof}

\section{Counting triangulations and quadrangulations}\label{sec:count:tiling}
\subsection{Flat metrics and $k$-differentials}\label{sec:flat:kdiff}
Let us now fix a finite subset $\Sigma=\{s_1,\dots,s_n\} \subset S$. Consider a triangulation $\G$ of $S$ with profile $\kappa$. We can always assume that $\Sigma$ is the set of singular vertices of $\G$.
There is a diffeomorphism from each face of $\G$ onto the equilateral triangle $\vartriangle=(0,1,e^{\frac{\imath\pi}{3}})$ mapping the edges of $\G$ to the sides of $\vartriangle$.
We endow each face of $\G$ with the Euclidean metric on $\Delta$ via such a map.
This metric extends smoothly across the vertices whose valency is equal to $6$.
At each vertex whose valency is not $6$ (that is, a singular vertex),  we get a conical singularity with cone angle $\frac{\pi e}{3}$, where $e$ is the valency of the vertex.

Let $S':=S\setminus\Sigma$. The linear holonomy of the flat metric provides us with a homomorphism $\rho: \pi_1(S',*) \ra \mathrm{SO}(2)$.
The image of $\rho$ is contained in $\Ub_6:=\{e^{k\frac{2\pi\imath}{6}}, \; k=0,\dots,5\}$.
Thus there exists $k\in \{1,2,3,6\}$ such that $\mathrm{Im}(\rho)=\Ub_k$, where $\Ub_k$ is the group of $k$-th roots of unity.
This means that the flat metric on $S$ is induced by a meromorphic $k$-differential $\xi$.
Note that $\xi$ is only determined up to a constant in $\S^1$.
The set of zeros and poles of $\xi$ is precisely the set of singularities of the flat metric, that is $\Sigma$.
Moreover, the orders of zeros and poles of $\xi$ are completely determined by the profile of $\G$.
Namely, the order of $\xi$ at a singular vertex $v$ is given by $k\frac{e_v-6}{6}$.
In particular, $v$ is a pole of $\xi$ if and only if $e_v<6$, and $v$ is a zero of $\xi$ if and only if $e_v>6$.

\medskip

Similarly, if $\G$ is a quadrangulation, then by endowing each face of $\G$ with the Euclidean  metric on the unit square $\square=(0,1,1+\imath,\imath)$, we also get a flat metric on $S$ with conical singularities at $\Sig$. This metric is defined by a meromorphic $k$-differential $\xi$ on $S$ determined up to constant in $\S^1$, where $k\in \{1,2,4\}$. The set of zeros and poles of $\xi$ is equal to $\Sigma$, and the order of $\xi$  at a vertex $v\in \Sigma$ is given by $k\frac{e_v-4}{4}$.

\subsection{Canonical covering}\label{sec:can:cov:kdiff}
Let $\xi$ be a  meromorphic $k$-differential on a compact Riemann surface $X$. In what follows, we will always assume that $\xi$ is not a power of some $k'$-differential with $k'<k$, and that the poles of $\xi$ have order at most $k-1$. It follows from a classical construction  (see for instance \cite{EV92, BCGGM:kdiff, Ngu19}) that there is a cyclic covering $\varpi: \hat{X}\ra X$ of degree $k$, a holomorphic $1$-form $\hat{\omega}$ on $\hat{X}$, and an automorphism $\tau$ of $\hat{X}$ of order $k$ such that
\begin{itemize}
\item[(a)] $\varpi^*\xi=\hat{\omega}^k$,
\item[(b)] $\tau^*\hat{\omega}=\zeta\hat{\omega}$, where $\zeta$ is a primitive $k$-th root of unity,
\item[(c)] $\hat{X}/\langle \tau \rangle \simeq X$.
\end{itemize}
Let $Z$ denote the set of zeros and poles of $\xi$, and $\hat{Z}=\varpi^{-1}(Z)$. Then, by construction, $\hat{Z}$ contains the zero set of $\omega$ and all the branched points of $\varpi$.
Moreover, the genus $\hat{g}$ of $\hat{X}$, the cardinality $\hat{n}$ of $\hat{Z}$, and the action of $\tau$ on $\hat{Z}$ are completely determined by the orders of the zeros and poles of $\xi$.

Note that for any $k$-th root of unity $\lambda$, the triple $(\hX, \lambda\homg,\langle \tau\rangle)$ also satisfies the conditions above. However, since there exists $\ell \in \{0,\dots,k-1\}$ such that $\tau^\ell\cdot(\hX,\homg,\langle \tau\rangle)=(\hX, \lambda\homg,\langle \tau\rangle)$, all these triples are isomorphic. It can also be shown that any two triples satisfying the conditions (a), (b), (c) above are isomorphic.
For this reason, we can call the triple $(\hX,\homg,\langle\tau\rangle)$ the canonical covering of $(X,\xi)$.


The following proposition characterizes the set of $k$-differentials arising from triangulations (resp. quadrangulations) on $S$.
\begin{Proposition}\label{prop:reg:tiled:surface}
Let $(X,\xi)$ and $(\hX,\homg,\langle \tau \rangle)$ be as above. Then
\begin{itemize}
\item[(i)] The $k$-differential $\xi$ arises from a triangulation of $S$ if and only if up to  multiplication by some constant in $\S^1$, $\homg$ satisfies
\begin{equation}\label{eq:cond:kdiff:triang}
\int_c\hat{\omega} \in \Z\oplus\Z e^{\frac{\imath\pi}{3}}, \; \; \forall c\in H_1(\hat{X},\hat{Z},\Z).
\end{equation}
Moreover, the triangulation is uniquely determined by the triple $(\hat{X},\hat{\omega},\langle\tau\rangle)$.

\item[(ii)] The $k$-differential $\xi$ arises from a quadrangulation of $S$ if and only if up to  multiplication by some constant in $\S^1$, $\homg$ satisfies
\begin{equation}\label{eq:cond:kdiff:quad}
\int_c\hat{\omega} \in \Z\oplus\Z\imath, \; \; \forall c\in H_1(\hat{X},\hat{Z},\Z).
\end{equation}
Moreover, the quadrangulation is uniquely determined by the triple $(\hat{X},\hat{\omega},\langle\tau\rangle)$.
\end{itemize}
\end{Proposition}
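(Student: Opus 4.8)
The plan is to reconstruct the triangulation (resp. quadrangulation) directly from the geometric data encoded in the triple $(\hX,\homg,\langle\tau\rangle)$, exploiting that the flat metric on $S$ induced by $\xi$ is, up to the $\Ub_k$-holonomy ambiguity, the push-forward of the translation-surface structure on $(\hX,\homg)$. First I would recall from \textsection\ref{sec:flat:kdiff} that a triangulation $\G$ of $S$ with profile $\kappa$ equips $S$ with a flat metric whose cone angles are integer multiples of $\pi/3$, hence is given by a $k$-differential $\xi$ with $k\in\{1,2,3,6\}$, and that $\varpi^*\xi=\homg^k$. The equilateral triangles of $\G$ lift under $\varpi$ to equilateral triangles of side length $1$ on $\hX$ whose edges, in the flat coordinate $z$ with $\homg=dz$, are straight segments whose holonomy vectors lie in the set of sixth roots of unity scaled by $\Z$; concretely the vertices of each lifted triangle differ by elements of $\Z\oplus\Z e^{\imath\pi/3}$ (the hexagonal lattice $\Lambda_3$). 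Summing such segments along any closed relative cycle $c\in H_1(\hX,\hZ;\Z)$ shows $\int_c\homg\in\Z\oplus\Z e^{\imath\pi/3}$, after normalizing $\homg$ by the appropriate unit constant so that one chosen edge has holonomy $1$. This gives the "only if" direction of (i); the quadrangulation case (ii) is identical with the square $\square$ and the Gaussian lattice $\Z\oplus\Z\imath=\Lambda_4$ in place of $\vartriangle$ and $\Lambda_3$.

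For the converse, suppose $\homg$ satisfies \eqref{eq:cond:kdiff:triang}. Then the periods of $\homg$ (absolute and relative) lie in $\Lambda_3$, so the developing map of the flat structure, followed by reduction, realizes $\hX$ as a finite union of equilateral triangles of the hexagonal tiling of $\C$ glued along edges by translations — i.e.\ $(\hX,\homg)$ is an "equilateral-triangle-tiled surface", its zeros and marked points $\hZ$ sitting at vertices of the induced triangulation $\hG$ of $\hX$. Here I would need the standard fact (as in \cite{Ngu19}) that a translation surface all of whose absolute and relative periods lie in $\Lambda_3$, with cone angles multiples of $\pi/3$, is tiled by such triangles; the key point is that the vertex set is exactly $\varpi^{-1}$(cone points) together with the $\tau$-orbit structure, which is forced because any period in $\Lambda_3$ can be subdivided into edges of the lattice and the Euler characteristic count \eqref{eq:euler:char} matches. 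Since $\tau^*\homg=\zeta\homg$ and $\zeta\Lambda_3=\Lambda_3$, the automorphism $\tau$ permutes the triangles of $\hG$, so $\hG$ descends to a triangulation $\G:=\hG/\langle\tau\rangle$ of $X\simeq\hX/\langle\tau\rangle$, whose singular vertices are the images of $\hZ$ and whose valencies, read off from the cone angles $k e_v/6\cdot(\pi/3)$, recover exactly the profile $\kappa$ of the starting data. That the flat metric on $S$ attached to $\G$ is the original one, hence that $\xi$ "arises from" $\G$, is then immediate from the construction.

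For uniqueness, I would argue that the triangulation $\G$ is \emph{combinatorially} determined by $(\hX,\homg,\langle\tau\rangle)$: the set of triangles of $\hG$ is intrinsic to the flat structure (they are the closures of the connected components of the complement of the union of all saddle connections of holonomy in $\{1,e^{\imath\pi/3},\ldots\}\cdot 1$ bounding a region of area $\tfrac{\sqrt3}{4}$), the gluings are determined by $\homg$, and the $\langle\tau\rangle$-action is given; so $\G=\hG/\langle\tau\rangle$ is canonically attached. Two triangulations of $S$ with the same associated triple therefore have isomorphic lifts compatible with the deck action, hence are equivalent in the sense of \textsection\ref{sec:triang:quad}. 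The same reasoning, verbatim with $\square$, $\Lambda_4$, $k\in\{1,2,4\}$, cone angles multiples of $\pi/2$, and the area normalization $1$ for unit squares, proves (ii).

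**Main obstacle.** The delicate step is the converse direction: upgrading "all periods of $\homg$ lie in the lattice $\Lambda_3$ (resp.\ $\Lambda_4$)" to an honest triangulation (resp.\ quadrangulation) of $\hX$, i.e.\ showing the flat surface genuinely decomposes into unit equilateral triangles (resp.\ unit squares) with vertices exactly at $\hZ$. One must rule out degenerate configurations — e.g.\ spurious extra vertices of valency $6$ (resp.\ $4$) that would change the profile, or triangles that fail to embed — and check that the cell structure is $\tau$-equivariant so that it descends. I expect this to be handled by invoking the triangulation/quadrangulation construction already worked out in \cite{Ngu19} (which is cited precisely for the existence of the limits in Theorem~\ref{th:asymp:count}), reducing the present proposition to bookkeeping about how $\hZ$, the orders of $\xi$, and the $\tau$-action translate back to valencies on $S$ via the formula (order of $\xi$ at $v$) $= k\tfrac{e_v-6}{6}$ (resp.\ $k\tfrac{e_v-4}{4}$) from \textsection\ref{sec:flat:kdiff}.
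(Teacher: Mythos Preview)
Your forward direction and the overall strategy are the same as the paper's. The difference is in the execution of the converse, and here the paper is sharper than your sketch. Rather than invoking a ``standard fact'' from \cite{Ngu19} that a translation surface with periods in $\Lambda_3$ is equilateral-triangle-tiled, the paper simply writes down the map
\[
\varphi:\hX\longrightarrow \C/\Lambda_3,\qquad \varphi(x)=\int_{\hx_0}^x\homg \ \bmod\ \Lambda_3,
\]
with $\hx_0\in\hZ$. The period hypothesis makes $\varphi$ well defined; it is a branched cover with branch locus contained in $\hZ$ and $\varphi(\hZ)=\{0\}$. The torus $\C/\Lambda_3$ carries a canonical triangulation $\G_0$ by two unit equilateral triangles with $0$ as its unique vertex, and $\hat\G:=\varphi^*\G_0$ is the desired triangulation of $\hX$. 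This single construction dissolves exactly the ``main obstacle'' you flag: there is nothing to check about degenerate configurations or spurious vertices, since you are pulling back a fixed cell structure, and any extra vertices in $\varphi^{-1}(0)\setminus\hZ$ automatically have valency~$6$ and do not affect the profile. The $\tau$-equivariance is also immediate in this formulation: because $\tau^*\homg=\zeta\homg$ with $\zeta\in\Ub_6$ and $\zeta\Lambda_3=\Lambda_3$, one has $\varphi\circ\tau=j\circ\varphi$ for the rotation $j(z)=\zeta z$ of the torus, which preserves $\G_0$; hence $\tau$ preserves $\hat\G$ and it descends to $S$. Your ``developing map followed by reduction'' is precisely $\varphi$, so you had the right object in hand --- you just did not exploit it, and as a result made the argument look harder than it is. For uniqueness the paper says essentially nothing beyond ``clear''; your more explicit description via saddle connections is correct but unnecessary once one has $\varphi$.
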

\begin{proof}
Let us first suppose that the triple $(\hat{X},\hat{\omega},\langle\tau\rangle)$ is obtained from a triangulation on $S$. By construction, we have a triangulation of $\hat{X}$ by equilateral triangles of unit side that is the pullback of the triangulation on $S$.   Since any cycle $c$ in $H_1(\hat{X},\hat{Z};\Z)$ can be represented by a path composed by some edges of this triangulation, up to rotation, we have $\int_c\homg \in \Z\oplus\Z e^{\frac{\imath\pi}{3}}$.

Assume now that $(\hat{X},\hat{\omega})$ satisfies \eqref{eq:cond:kdiff:triang}. Fix a point $\hx_0 \in \hZ$, and define a map $\varphi: \hat{X} \ra \C/(\Z\oplus\Z e^{\frac{\imath\pi}{3}})$ as follows
 $$
 \varphi(x)=\int_{\hat{x}_0}^x \hat{\omega} \mod (\Z\oplus\Z e^{\frac{\imath\pi}{3}})
 $$
where the integral is taken along any path from $\hat{x}_0$ to $x$.
It is straightforward to check that $\varphi$ is a ramified covering with all the branched points contained in $\hat{Z}$, and satisfies $\varphi(\hat{Z})=\{0\}$.
The torus $\C/(\Z\oplus\Z e^{\frac{\imath\pi}{3}})$ admits a triangulation $\G_0$ composed by $2$ equilateral triangles with $0$ being the unique vertex.
The pullback of $\G_0$ by $\varphi$ gives a triangulation of $\hat{X}$.
Since $\tau^*\hat{\omega}=\zeta\hat{\omega}$, where $\zeta \in \Ub_6$,  it follows that there exists an automorphism $j$ of the torus $\C/(\Z\oplus\Z e^{\frac{\imath\pi}{3}})$ which fixes $0$ and satisfies $\varphi\circ\tau=j\circ\varphi$. Note that $j$ preserves the triangulation $\G_0$.
Therefore, $\tau$ preserves the triangulation of $(\hat{X},\hat{\omega})$, which means that
this triangulation descends to a triangulation of $\hat{X}/\langle \tau \rangle\simeq S$.
It is clear that each triple $(\hat{X},\hat{\omega},\langle \tau \rangle)$ provides us with a unique triangulation of $S$ up to homeomorphisms.

The case where $(\hat{X},\hat{\omega},\langle \tau \rangle)$ arises from a quadrangulation of $S$ follows from similar arguments.
\end{proof}
\subsection{Moduli space of $k$-differentials}\label{sec:mod:sp:kdiff}
Let $\kappa:=(\kappa_1,\dots,\kappa_n)$ be an admissible profile for triangulations  (resp. quadrangulations) of $S$. Fix a positive integer $k$ such that $k|6$ and $\frac{6}{k} | \gcd(\kappa_1,\dots,\kappa_n)$ (resp. $k | 4$ and $\frac{4}{k} | \gcd(\kappa_1,\dots,\kappa_n)$). Let $k_i=k\cdot\frac{\kappa_i}{6}$ if $\kappa$ is a profile for triangulations, and $k_i=k\cdot\frac{\kappa_i}{4}$ if  $\kappa$ is a profile  for quadrangulations. In all cases, set $\ul{k}=(k_1,\dots,k_n)$.
Note that we have $k_i\in \Z_{>-k}$, and $k_1+\dots+k_n=k\cdot(2g-2)$.

Let $\Skdiff$ denote the space of pairs $(X,\xi)$ where $X$ is a Riemann surface of genus $g$, and $\xi$ is a meromorphic $k$-differential on $X$ whose zeros and poles have orders prescribed  by $\ul{k}$.
Since every $k$-differential is uniquely determined by its canonical covering, we can identify $\Skdiff$ with the space of triples $(\hX,\homg,\langle\tau\rangle)$ satisfying the conditions (a), (b), (c) in \textsection\ref{sec:can:cov:kdiff}.
Denote by $\pSkdiff$ the projectivization of $\Skdiff$, that is $\pSkdiff=\Skdiff/\C^*$.

Given $(X,\xi)\in \Skdiff$, denote by $Z$ the set of zeros and poles of $\xi$. Let $\hZ$ be the inverse image of $Z$ in $\hX$. By construction, all the zeros of $\homg$ are contained in $\hZ$.
However, some of the points in $\hZ$ may not be zeros of $\homg$. We will consider those points as zeros of order $0$ of $\homg$, and subsequently $\hZ$ as the zero set of $\homg$.
Let $\ul{\hk}:=(\hk_1,\dots,\hk_{\hn})$ be the sequence of non-negative integers recording the orders of the zeros in $\hZ$.
Recall that $\Sabel$ is the stratum of Abelian differentials consisting of triples $(\hX,\homg,\hZ)$, where $\hX$ is a Riemann surfaces of genus $\hg$, $\homg$ an Abelian differential on $\hX$, and $\hZ=\{\hx_1,\dots,\hx_{\hn}\}$ is a finite subset  of $\hX$ such that $\mathrm{div}(\homg)=\hk_1\hx_1+\dots+\hk_{\hn}\hx_{\hn}$.

\begin{Proposition}\label{prop:kdiff:ln:man}
The stratum $\Skdiff$ is a finite cover of a  polarized linear submanifold of $\Sabel$. Moreover, if $k_i\not\in k\cdot\Z$, for all $i=1,\dots,n$, then this linear submanifold is absolutely rigid.
\end{Proposition}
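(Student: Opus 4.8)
The plan is to use the canonical covering construction of \textsection\ref{sec:can:cov:kdiff} to realize $\Skdiff$ as a finite cover of an explicitly describable locus $\cM$ inside $\Sabel$, and then to read off all the structures that appear in \textsection\ref{subsec:linsubrev}--\ref{subsec:linsubrev} (period coordinates, the Hodge bundle, the weight filtration) from the eigenspace decomposition of the order-$k$ automorphism $\tau$. Concretely, let $\iota\colon\Skdiff\to\Sabel$ be the map $(X,\xi)\mapsto(\hX,\homg,\hZ)$ that sends a $k$-differential to its canonical covering and forgets the cyclic group $\langle\tau\rangle$. Since $(X,\xi)$ is recovered from the triple $(\hX,\homg,\langle\tau\rangle)$, and the set of order-$k$ cyclic subgroups of $\Aut(\hX)$ satisfying (a)--(c) of \textsection\ref{sec:can:cov:kdiff} is finite (as $\Aut(\hX)$ is finite, $\hg\ge 2$ holding in the strata relevant to Theorem~\ref{th:asymp:count}; the sporadic exceptions are checked directly), the map $\iota$ is a finite morphism of algebraic varieties onto its image $\cM:=\iota(\Skdiff)$, and it is a local isomorphism onto $\cM$ because $\langle\tau\rangle$ is rigid and hence locally constant along $\cM$. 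In particular $\Skdiff$ is a finite cover of $\cM$ and $\dim\cM=2g+n-1$.

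Next I would show that $\cM$ is a linear submanifold of $\Sabel$. Fix $(\hX_0,\homg_0)\in\cM$ with automorphism $\tau_0$, and a period chart identifying a neighborhood in $\Sabel$ with an open subset of $\HH^1(\hX_0,\hZ_0;\C)$, on which $\tau_0$ acts linearly by $\tau_0^*$. Condition (b) of \textsection\ref{sec:can:cov:kdiff}, namely $\tau^*\homg=\zeta\homg$, is in period coordinates exactly the condition of lying in the $\zeta$-eigenspace $E_\zeta:=\ker(\tau_0^*-\zeta\cdot\mathrm{id})\subset\HH^1(\hX_0,\hZ_0;\C)$, a linear subspace. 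Conversely, if a nearby differential $\homg'$ has periods in $E_\zeta$, then its underlying curve is fixed by the action of $\tau_0$ on moduli, so it carries an automorphism $\tau'$ isotopic to $\tau_0$ with $\tau'^*\homg'=\zeta\homg'$; then $\homg'^k$ is $\tau'$-invariant, descends to a $k$-differential on $\hX'/\langle\tau'\rangle$, and the conditions that it be non-primitive of the wrong order or have a pole of order $\ge k$ are closed, so near $(X,\xi)$ it genuinely lies in $\Skdiff$. Thus, locally, $\cM$ is precisely the image of $E_\zeta$, so $\cM$ is a linear submanifold; in the notation of \textsection\ref{subsec:linsubrev} this identifies the tangent local system $\cV$ of $\cM$ with the $\zeta$-eigensubsystem $((\cH^1_{\rm rel})_{\C})_\zeta$ of $\tau^*$.

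To obtain polarization, apply the functor ``$\zeta$-eigenspace of $\tau^*$'', which is exact because $\C[\Z/k\Z]$ is semisimple and $\zeta$ is a primitive $k$-th root of unity, to $0\to\cW_{\C}\to(\cH^1_{\rm rel})_{\C}\to\cH^1_{\C}\to0$: this yields $\cV_0=(\cW_{\C})_\zeta$ and $\cV_1=(\cH^1_{\C})_\zeta$. Since $\tau$ is a \emph{holomorphic} automorphism of $\hX$, the operator $\tau^*$ preserves the Hodge decomposition $\HH^1(\hX;\C)=\HH^{1,0}\oplus\HH^{0,1}$, so $(\cH^1_{\C})_\zeta$ is fiberwise a sub-Hodge structure of weight $1$ and $F\cap\cV_1$ (the $\zeta$-eigenbundle of $\tau^*$ on the Hodge bundle) is a holomorphic subbundle; hence $\cV_1$ is a subvariation of Hodge structure of $(\cH^1_{\C})_{|\Pb\hat\cM}$, i.e. $\cM$ is polarized in the sense of Definition~\ref{def:polarized}, and $h_1$ restricts non-degenerately to $\cV_1$.

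Finally, suppose $k_i\notin k\Z$ for all $i$; I would show $\cM$ is absolutely rigid, i.e. $\cV_0=(\cW_{\C})_\zeta=(\tilde\HH^0(\hZ;\C))_\zeta=0$. Since $\zeta\ne1$ this is the same as the vanishing of the $\zeta$-eigenspace of $\tau^*$ on $\HH^0(\hZ;\C)=\bigoplus_{i=1}^n\C^{\varpi^{-1}(x_i)}$. Near the point $x_i$ of order $k_i$, a $k$-th root $z^{k_i/k}\,dz$ of $z^{k_i}(dz)^k$ has monodromy $e^{2\pi\imath k_i/k}$, of order $e_i:=k/\gcd(k_i,k)$ in $\Ub_k$, so $\varpi$ is ramified over $x_i$ with index $e_i$; hence $\langle\tau\rangle\cong\Z/k\Z$ permutes the $\gcd(k_i,k)$ points of $\varpi^{-1}(x_i)$ cyclically through a quotient of order $\gcd(k_i,k)$, and the eigenvalues of $\tau^*$ on $\C^{\varpi^{-1}(x_i)}$ are exactly the $\gcd(k_i,k)$-th roots of unity. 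Therefore the primitive $k$-th root $\zeta$ occurs iff $k\mid\gcd(k_i,k)$, i.e. iff $k\mid k_i$, which is excluded; so $(\HH^0(\hZ;\C))_\zeta=0$, whence $\cV_0=0$ and $\cM$ is absolutely rigid. The main obstacle is the converse inclusion in the second paragraph --- that periods in $E_\zeta$ really do come from canonical coverings, so that $\cM$ is algebraic of the expected dimension rather than something larger --- which rests on rigidity of automorphisms in genus $\hg\ge2$ together with the properties of the canonical covering recalled in \textsection\ref{sec:can:cov:kdiff}; alternatively one may invoke the identification of $\Skdiff$ with such a locus already established in \cite{EV92, BCGGM:kdiff, Ngu19}.
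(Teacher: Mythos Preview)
Your proof is correct and follows the same overall architecture as the paper: define the forgetful map $\iota=\psi_1$ to $\Sabel$, identify the local image with the $\zeta$-eigenspace $E_\zeta=V_\zeta$ of $\tau^*$ on relative cohomology, and then analyze the exact sequence \eqref{eq:exact:seq:lin:subv} eigenspace by eigenspace. One slip: your assertion $\dim\cM=2g+n-1$ is wrong; for primitive $k$-differentials with $k\ge 2$ one has $\dim\Skdiff=2g+n-2$ (this is the dimension of $V_\zeta$, not of the ambient relative cohomology), though this plays no role in the argument.

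Where you differ from the paper is in the last two steps. For polarization the paper simply cites \cite[Prop.~5.1]{Ngu19} for the non-degeneracy of $h_1$ on $\pp(V_\zeta)$, whereas you argue directly that $\tau^*$, being induced by a holomorphic automorphism, preserves the Hodge filtration, so that $(\cH^1_\C)_\zeta$ is a sub-VHS; combined with the exactness of the $\zeta$-eigenspace functor this gives $\cV_1=(\cH^1_\C)_\zeta$ and hence polarization in the sense of Definition~\ref{def:polarized}. For absolute rigidity the paper cites \cite[Prop.~4.1]{Ngu19} for $\dim(\ker\pp\cap V_\zeta)=\#\{i:k_i\in k\Z\}$, whereas you compute $(\HH^0(\hZ;\C))_\zeta$ directly from the ramification data: over $x_i$ the fiber has $\gcd(k_i,k)$ points permuted cyclically by $\tau$, so the eigenvalues on $\C^{\varpi^{-1}(x_i)}$ are the $\gcd(k_i,k)$-th roots of unity, and a primitive $k$-th root appears iff $k\mid k_i$. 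Both of your direct arguments are valid and make the proof self-contained; the paper's citations are shorter but hide exactly the computations you carry out.
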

\begin{proof}
Let $(\hX,\homg,\langle\tau\rangle)$ be an element of $\Skdiff$, and $\hZ$ be the zero set of $\homg$. Let $\psi_1: \Skdiff \ra \Sabel$ be the map which sends the triple $(\hX,\homg,\langle \tau \rangle)$ to the triple $(\hX,\homg,\hZ)$, that is we forget about the automorphism $\tau$. This map is actually a finite morphism of algebraic varieties. In particular $\psi_1(\Skdiff)$ is an algebraic subvariety of $\Sabel$.
Recall that a neighborhood of $(\hX,\homg,\hZ)$ in $\Sabel$ can be identified with an open subset of $H^1(\hX,\hZ,\C)$.
There is a neighborhood $\Ucal$ of $(\hX,\homg,\langle\tau\rangle)$ in $\Skdiff$ such that $\psi_1(\Ucal)$ is an open subset of $V_\zeta \subset H^1(\hX,\hZ,\C)$, where $V_\zeta$ is the eigenspace associated with the eigenvalue $\zeta$ of the action of $\tau$ on $H^1(\hX,\hZ,\C)$ (see for instance \cite{BCGGM:kdiff, Ngu19}). Thus $\psi_1(\Skdiff)$ is a linear submanifold of $\Sabel$.

Let $\pp: H^1(\hX,\hZ,\C) \ra H^1(\hX,\C)$ be the natural projection. By \cite[Prop. 5.1]{Ngu19}, the restriction of the intersection form on $H^1(\hX,\C)$ to $\pp(V_\zeta)$ is non-degenerate. Thus $\psi_1(\Skdiff)$ is polarized. Finally, by \cite[Prop.4.1]{Ngu19}, $\dim \ker\pp\cap V_\zeta = \#\{k_i\in k\cdot\Z, \; i=1,\dots,n\}$. Thus if $k_i\not\in k\cdot\Z$ for all $i=1,\dots,n$, then $\pp(V_\zeta)$  is absolutely rigid.
\end{proof}

\subsection{Counting in moduli spaces}\label{sec:count:in:mod:sp}
Let $\Tkdiff$ (resp. $\Qkdiff$) denote the set of $\hxx=(\hX,\homg,\langle\tau\rangle)\in \Skdiff$ which satisfy  the condition \eqref{eq:cond:kdiff:triang} (resp. the condition \eqref{eq:cond:kdiff:quad}) of Proposition~\ref{prop:reg:tiled:surface}.

\begin{Lemma}\label{lm:forms:same:triang}
Let $\hxx=(\hX,\homg,\langle \tau \rangle)$ and $\hxx'=(\hX',\homg',\langle \tau' \rangle)$ be two elements of $\Tkdiff$ (resp. of $\Qkdiff$).
Then $\hxx$ and $\hxx'$ give the same triangulation (resp. quadrangulation) of $S$ if and only if there exists $\alpha\in \Ub_6$ (resp. $\alpha\in \Ub_{4}$) such that $\hxx'\simeq\alpha\cdot\hxx$.
\end{Lemma}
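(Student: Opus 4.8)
The plan is to reduce both implications to the geometry of the developing map of Proposition~\ref{prop:reg:tiled:surface} and to the elementary fact that a complex number which, by multiplication, preserves the hexagonal lattice $\Z\oplus\Z e^{\imath\pi/3}$ (resp. the square lattice $\Z\oplus\Z\imath$) must lie in $\Ub_6$ (resp. $\Ub_4$). For $\hxx=(\hX,\homg,\langle\tau\rangle)\in\Tkdiff$, write $\hat\G$ for the tiling of $\hX$ by unit equilateral triangles obtained as the pullback $\varphi^*\G_0$ of the standard two-triangle tiling $\G_0$ of $T=\C/(\Z\oplus\Z e^{\imath\pi/3})$ under $\varphi(x)=\int_{\hx_0}^x\homg$, and $\G$ for the triangulation of $X=\hX/\langle\tau\rangle$ to which it descends; this is the triangulation of $S$ associated with $\hxx$. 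I treat the triangulation case; the quadrangulation case is identical, with $\Ub_6$, $\Z\oplus\Z e^{\imath\pi/3}$, $T$, $\G_0$ replaced by $\Ub_4$, $\Z\oplus\Z\imath$, $\C/(\Z\oplus\Z\imath)$ and its standard one-square tiling.

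For the ``if'' direction, take $\alpha\in\Ub_6$. Multiplication by $\alpha$ preserves $\Z\oplus\Z e^{\imath\pi/3}$, so $\alpha\cdot\hxx=(\hX,\alpha\homg,\langle\tau\rangle)$ still lies in $\Tkdiff$; its developing map is $m_\alpha\circ\varphi$, where $m_\alpha$ is the automorphism of $T$ induced by $\alpha$, which fixes $0$ and preserves $\G_0$. Hence $(m_\alpha\circ\varphi)^*\G_0=\varphi^*\G_0=\hat\G$, so $\alpha\cdot\hxx$ induces the same tiling on $\hX$, hence the same triangulation of $S$, as $\hxx$; and an isomorphism of triples $\hxx'\simeq\alpha\cdot\hxx$ carries one associated triangulation to the other, so $\hxx'$ and $\hxx$ induce the same triangulation of $S$.

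For the ``only if'' direction, assume $\hxx=(\hX,\homg,\langle\tau\rangle)$ and $\hxx'=(\hX',\homg',\langle\tau'\rangle)$ induce the same triangulation of $S$. The metric carrying $\hat\G$ on $\hX$ is $|\homg|^2$, since $\varphi$ is a local isometry onto $(T,|dz|^2)$; as the tiling metric depends only on the combinatorics of the triangulation and $S$ is oriented, the equivalence of the two triangulations of $S$ can be realized by an orientation-preserving isometry of the quotient flat surfaces, i.e. a biholomorphism $\Psi\colon X\to X'$ with $\Psi^*\xi'=\mu\,\xi$ for some $\mu\in\S^1$, where $\varpi\colon\hX\to X$ and $\varpi'\colon\hX'\to X'$ are the canonical coverings, $\varpi^*\xi=\homg^k$ and $(\varpi')^*\xi'=(\homg')^k$. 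By the uniqueness and functoriality of the canonical covering (\textsection\ref{sec:can:cov:kdiff}), $\Psi$ lifts to a $\tau$-equivariant biholomorphism $\hat\Psi\colon\hX\to\hX'$ with $\hat\Psi^*\homg'=\lambda\homg$ for some $k$-th root $\lambda$ of $\mu$; thus $\hxx'\simeq(\hX,\lambda\homg,\langle\tau\rangle)=\lambda\cdot\hxx$. Now $\hat\Psi$ carries $\hat\G=\varpi^*\G$ to $\hat\G'=(\varpi')^*\G'$ (it covers $\Psi$, which takes $\G$ to $\G'$), and it also carries the tiling $\hat\G_\lambda$ determined by the $1$-form $\hat\Psi^*\homg'=\lambda\homg$ to $\hat\G'$; hence $\hat\G=\hat\G_\lambda$, i.e. the $1$-forms $\homg$ and $\lambda\homg$ (of equal modulus) define the same tiling on $\hX$. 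Consequently, for any edge $\epsilon$ of this common tiling, $\int_\epsilon\homg$ and $\lambda\int_\epsilon\homg$ are edge vectors of faces of $\G_0$ (developed through $\varphi$, resp. through $\varphi_\lambda$), hence both lie in $\Ub_6$; since $\int_\epsilon\homg\neq 0$, this forces $\lambda\in\Ub_6$, which completes the proof.

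The step I expect to be the main obstacle is producing $\Psi$ and $\hat\Psi$ in the ``only if'' direction: upgrading a merely topological equivalence of triangulations to a genuine biholomorphism $\Psi$ of the quotient $k$-differentials requires replacing the homeomorphism by its isometric, combinatorially determined representative and keeping track of orientations; and then lifting $\Psi$ coherently, as a $\tau$-equivariant isomorphism, to the canonical coverings requires the precise functoriality of the construction of \textsection\ref{sec:can:cov:kdiff} (in particular, that an isomorphism of $k$-differentials, up to a scalar, induces an isomorphism of canonical covers conjugating the cyclic deck groups). Once the comparison map is on the covers, deducing $\hat\G=\hat\G_\lambda$ and then $\lambda\in\Ub_6$ is straightforward from the equilateral-triangle (resp. unit-square) structure.
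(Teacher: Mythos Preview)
Your argument is correct, and your ``if'' direction is essentially the paper's own proof: an isomorphism $f\colon\hX\to\hX'$ with $f^*\homg'=\alpha\homg$, $\alpha\in\Ub_6$, forces the two pulled-back triangulations of $\hX$ to coincide, hence their quotients on $S$ coincide. The paper in fact proves only this direction and leaves the converse to the reader.

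Your ``only if'' direction supplies exactly what the paper omits, along the natural lines one would expect: upgrade the combinatorial equivalence to an isometry of the quotient flat surfaces, hence a biholomorphism $\Psi$ with $\Psi^*\xi'=\mu\xi$; lift via the uniqueness of the canonical cover asserted in \textsection\ref{sec:can:cov:kdiff} to get $\hat\Psi^*\homg'=\lambda\homg$; and then read off $\lambda\in\Ub_6$ by comparing edge integrals in the common tiling $\hat\G=\hat\G_\lambda$. The edge-vector step is clean: edges of $\hat\G$ have $\homg$-period in $\Ub_6$ and, being also edges of $\hat\G_\lambda$, have $\lambda\homg$-period in $\Ub_6$, so $\lambda\in\Ub_6$.

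One small point worth making explicit: in producing the biholomorphism $\Psi$ you are implicitly taking the homeomorphism realizing the equivalence of triangulations to be orientation-preserving (otherwise $\Psi$ would be antiholomorphic and $\Psi^*\xi'=\mu\bar\xi$). Since $S$ is declared oriented in \textsection\ref{sec:triang:quad}, this is the intended reading, but the paper's definition of equivalence says only ``homeomorphism'', so it is good practice to flag the convention.
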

\begin{proof}
We only give the proof for the case $\hxx,\hxx'\in\Tkdiff$.
Assume that $\hxx' \simeq\alpha\cdot\hxx$, where $\alpha^{6}=1$.
This means that there is an isomorphism $f: \hX \ra \hX'$ such that $f^*\homg'=\alpha\homg$ and $\langle \tau\rangle=\langle f^{-1}\circ\tau'\circ f \rangle$.  Since  both $\homg$ and $\homg'$ satisfy \eqref{eq:cond:kdiff:triang},  $f^*\homg'$ and $\homg$ give two triangulations of $\hX$ by unit equilateral triangles (see Proposition~\ref{prop:reg:tiled:surface}). Since $\hf^*\homg'=\alpha\homg$, with $\alpha\in \Ub_6$, the two triangulations coincide. Thus, they induce the same triangulation of $S$. The proof of the converse is left to the reader.
\end{proof}

For each $k \in \{1,2,3,6\}$ such that $\frac{6}{k} | \gcd(\kappa_1,\dots,\kappa_n)$ (resp. each $k\in \{1,2,4\}$ such that $\frac{4}{k} | \gcd(\kappa_1,\dots,\kappa_n)$), denote by $\TT^{(k)}(\kappa)$  (resp. by $\QQ^{(k)}(\kappa)$) the set of triangulations (resp. quadrangulations) such that the corresponding flat surface is induced by a $k$-differential.  We then have a partition of the set $\TT(\kappa)$ into the disjoint union  $\sqcup_{k}\TT^{(k)}(\kappa)$, and a partition of $\QQ(\kappa)$ into the disjoint union  $\sqcup_{k}\QQ^{(k)}(\kappa)$.

For any $m\in \Z_{>0}$, denote by $\Tkdiffm$ the set of elements of $\Tkdiff$ whose canonical triangulation is composed by at most $km$ triangles.
Since the area of an equilateral triangle with unit side is  $\frac{\sqrt{3}}{4}$,  $\hxx \in \Tkdiffm$ if and only if $\hxx$ satisfies \eqref{eq:cond:kdiff:triang}, and
$$
||\homg||^2=\frac{\imath}{2}\int_{\hX}\homg\wedge\ol{\homg} \leq \frac{\sqrt{3}}{4}\cdot km.
$$
By definition, the triangulations of $S$ that are induced by elements of $\Tkdiffm$ have at most $k$ triangles.

Similarly, denote by $\Qkdiffm$ the set of $\hxx\in \Qkdiff$ such that the canonical quadrangulation of $\hxx$ has at most $km$ squares, or equivalently $||\homg||^2 \leq km$. The quadrangulations of $S$ induced by elements of $\Qkdiffm$ have at most $m$ squares.

Let $\TT^{(k)}(\kappa,m)$ (resp. $\QQ^{(k)}(\kappa,m)$) denote the set of triangulations in $\TT^{(k)}(\kappa)$ (resp. quadrangulations in $\QQ^{(k)}(\kappa)$) that are composed by at most $m$ triangles (resp. $m$ squares). By definition, every element of $\Tkdiffm$ (resp. of $\Qkdiffm$) gives us  an element of $\TT^{(k)}(\kappa,m)$ (resp. and element of $\QQ^{(k)}(\kappa,m)$). Notice however that several elements of $\Tkdiffm$ (resp. $\Qkdiffm$) may give the same  element of $\TT^{(k)}(\kappa,m)$ (resp. $\QQ^{(k)}(\kappa,m)$.

\begin{Proposition}\label{prop:triang:n:lat:pts}
For any $k \in \{1,2,3,6\}$ such that $\frac{6}{k} | \gcd(\kappa_1,\dots,\kappa_n)$, we have
\begin{equation}\label{eq:triang:n:lat:pts}
\lim_{m\ra \infty}\frac{\#\Tkdiffm}{\#\TT^{(k)}(\kappa,m)}=\frac{6}{k}.
\end{equation}

For any $k\in \{1,2,4\}$ such that $\frac{4}{k} | \gcd(\kappa_1,\dots,\kappa_n)$, we have
\begin{equation}\label{eq:quad:n:lat:pts}
\lim_{m\ra \infty}\frac{\#\Qkdiffm}{\#\QQ^{(k)}(\kappa,m)}=\frac{4}{k}.
\end{equation}
\end{Proposition}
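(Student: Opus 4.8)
The plan is to realise $\TT^{(k)}(\kappa,m)$ as the quotient of $\Tkdiffm$ by a natural $\Ub_6$-action and then to count orbits. First I would observe that, by \textsection\ref{sec:flat:kdiff} and Proposition~\ref{prop:reg:tiled:surface}(i), every $\hxx=(\hX,\homg,\langle\tau\rangle)\in\Tkdiffm$ induces a triangulation $\Phi(\hxx)\in\TT^{(k)}(\kappa)$ of $S$ with at most $m$ triangles (its canonical triangulation of $\hX$ has at most $km$ triangles and is the degree-$k$ pull-back of $\Phi(\hxx)$), so $\Phi\colon\Tkdiffm\to\TT^{(k)}(\kappa,m)$ is well defined; it is onto because, given a triangulation in $\TT^{(k)}(\kappa,m)$, the associated $k$-differential lies in $\Skdiff$ and, by Proposition~\ref{prop:reg:tiled:surface}(i), after multiplying $\homg$ by a suitable $\lambda\in\S^1$ its canonical covering lands in $\Tkdiffm$ and maps to the given triangulation. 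Next I would let $\Ub_6$ act on $\Tkdiffm$ by $\alpha\cdot(\hX,\homg,\langle\tau\rangle)=(\hX,\alpha\homg,\langle\tau\rangle)$; this preserves the conditions (a)--(c) of \textsection\ref{sec:can:cov:kdiff}, the norm $\|\homg\|$, and condition \eqref{eq:cond:kdiff:triang} (the hexagonal lattice $\Z\oplus\Z e^{\imath\pi/3}$ being $\Ub_6$-invariant). By Lemma~\ref{lm:forms:same:triang} the fibers of $\Phi$ are exactly the $\Ub_6$-orbits, so $\#\Tkdiffm=\sum_{t}\#\Phi^{-1}(t)$, the sum over $t\in\TT^{(k)}(\kappa,m)$.

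The second step is to pin down the orbit sizes. For every $\hxx=(\hX,\homg,\langle\tau\rangle)$ and every $\ell$, the automorphism $\tau^\ell$ of $\hX$ normalises $\langle\tau\rangle$ and satisfies $(\tau^\ell)^*\homg=\zeta^\ell\homg$, so $\zeta^\ell\cdot\hxx\simeq\hxx$; hence $\Ub_k=\langle\zeta\rangle$ is contained in the stabiliser $\mathrm{Stab}_{\Ub_6}(\hxx)$, and $\#(\Ub_6\cdot\hxx)=[\Ub_6:\mathrm{Stab}_{\Ub_6}(\hxx)]$ divides $6/k$. In particular every fiber of $\Phi$ has at most $6/k$ elements, which already gives the easy bound $\#\Tkdiffm\le\frac{6}{k}\,\#\TT^{(k)}(\kappa,m)$ for all $m$, and the orbit has exactly $6/k$ elements unless $\mathrm{Stab}_{\Ub_6}(\hxx)\supsetneq\Ub_k$; I will call such $\hxx$, and the triangulations below them, \emph{exceptional}.

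The crux is then to show that exceptional objects are asymptotically negligible, i.e.\ $\#\{\hxx\in\Tkdiffm:\hxx\ \text{exceptional}\}=o(\#\Tkdiffm)$ as $m\to\infty$. If $\hxx$ is exceptional there are $\alpha\in\Ub_6\smin\Ub_k$ and $f\in\Aut(\hX)$ with $f^*\homg=\alpha\homg$ and $f\langle\tau\rangle f^{-1}=\langle\tau\rangle$; since $\alpha\notin\Ub_k$ one has $f\notin\langle\tau\rangle$, so $f$ descends to a non-trivial automorphism $\ol f$ of $X=\hX/\langle\tau\rangle$ with $\ol f^*\xi=\alpha^k\xi$ and $\alpha^k\ne1$. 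Thus the underlying $k$-differential $(X,\xi)$ has a non-trivial automorphism preserving $[\xi]$, and the set $\mathcal B\subset\Skdiff$ of triples with this property is a proper closed subvariety: it is the finite union, over topological types of finite-order automorphism groups of a genus-$g$ surface acting non-trivially on the differential, of subloci defined by linear equations in period coordinates, and it is proper because a generic point of $\Skdiff$ has no non-trivial automorphism preserving $[\xi]$. Because $\dim_\C\mathcal B<\dim_\C\Skdiff$, the number of solutions of \eqref{eq:cond:kdiff:triang} lying on $\mathcal B$ with $\|\homg\|^2\le\frac{\sqrt3}{4}km$ should be of strictly smaller order in $m$ than $\#\Tkdiffm$; this lattice-point estimate is of the same nature as, and is subsumed by, the counting arguments of \cite{Ngu19} that already yield the existence of the limits in Theorem~\ref{th:asymp:count}.

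Finally I would combine the pieces. Since each fiber of $\Phi$ is non-empty, $\#\TT^{(k)}(\kappa,m)\le\#\Tkdiffm\le\frac{6}{k}\#\TT^{(k)}(\kappa,m)$, and
\[
\#\Tkdiffm=\frac{6}{k}\#\TT^{(k)}(\kappa,m)-\sum_{t\ \text{exceptional}}\Bigl(\frac{6}{k}-\#\Phi^{-1}(t)\Bigr)\ \geq\ \frac{6}{k}\#\TT^{(k)}(\kappa,m)-\frac{6}{k}\,\#\{t\ \text{exceptional}\};
\]
by the previous step the correction is $o(\#\Tkdiffm)=o(\#\TT^{(k)}(\kappa,m))$, whence $\#\Tkdiffm/\#\TT^{(k)}(\kappa,m)\to 6/k$, which is \eqref{eq:triang:n:lat:pts}. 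The identity \eqref{eq:quad:n:lat:pts} would be proved in exactly the same way, replacing $\Ub_6$ by $\Ub_4$, the hexagonal lattice by the square lattice $\Z\oplus\Z\imath$, $6/k$ by $4/k$, and Proposition~\ref{prop:reg:tiled:surface}(i) and Lemma~\ref{lm:forms:same:triang} by their quadrangulation counterparts. I expect the main obstacle to be the negligibility of the exceptional locus in the third step: identifying $\mathcal B$ with a genuine proper subvariety rests on the generic absence of automorphisms preserving $[\xi]$ on strata of $k$-differentials (which may require a separate check for small values of $g$, $k$ and $|\kappa|$), and bounding its lattice-point count requires the quantitative input of \cite{Ngu19} rather than just the qualitative statement that the limits exist.
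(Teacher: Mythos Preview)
Your proposal is correct and follows essentially the same approach as the paper's proof: identify $\TT^{(k)}(\kappa,m)$ with the set of $\Ub_6$-orbits in $\Tkdiffm$ via Lemma~\ref{lm:forms:same:triang}, observe that $\Ub_k$ always lies in the stabiliser so that generic orbits have size $6/k$, and argue that the exceptional points sit in a proper closed subvariety and hence contribute negligibly to the lattice-point count. The only difference is cosmetic: the paper simply declares that points with larger stabiliser are orbifold points of $\Skdiff$ (hence lie in the orbifold locus, a union of proper subvarieties), whereas you descend the extra symmetry to a non-trivial automorphism of $(X,[\xi])$ and describe the exceptional locus $\mathcal B$ in those terms; both descriptions pin down the same proper subvariety and lead to the same negligibility estimate.
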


\begin{proof}
By Lemma~\ref{lm:forms:same:triang}, we have a bijection between $\TT^{(k)}(\kappa,m)$ and the set of $\Ub_6$-orbits in $\Tkdiffm$.
By construction, the stabilizer of a point $\hxx\in \Skdiff$ for the $\Ub_6$ action contains $\Ub_k$. Thus generically, an $\Ub_6$-orbit contains $\frac{6}{k}$ elements. There may exist $\hxx\in \Skdiff$ such that $\Ub_6\cdot\hxx$ contains less than $\frac{6}{k}$ elements, in which case, $\hxx$ is an orbifold point of $\Skdiff$. Since the set of orbifold points of $\Skdiff$ is a (finite) union of proper subvarieties, the number of elements of $\Tkdiffm$ that are orbifold points is negligible compared to $\#\Tkdiffm$ as $m\lra +\infty$. Therefore, we have
$$
\lim_{m\ra \infty}\frac{\#\Tkdiffm}{\#\TT^{(k)}(\kappa,m)}=\frac{6}{k}.
$$
The proof of \eqref{eq:quad:n:lat:pts} follows the same lines.
\end{proof}

\subsection{Comparison with Masur-Veech measure}\label{sec:compare:MV:vol}
By Proposition~\ref{prop:triang:n:lat:pts}, to get the asymptotics of $\TT^{(k)}(\kappa,m)$ and of $\QQ^{(k)}(\kappa,m)$, it suffices to compute the asymptotics of the points in $\Tkdiffm$ and in $\Qkdiffm$ respectively. For this purpose, we introduce some volume forms on $\Skdiff$ as follows: let $\Lambda_3:=(\Z\oplus\Z e^{\frac{2\pi\imath}{3}})^{2\hg+\hn-1}$ and $\Lambda_4:=(\Z\oplus\Z\imath)^{2\hg+\hn-1}$.
Given a point $(\hX,\homg,\langle\tau\rangle) \in \Skdiff$, using a basis of $H_1(\hX,\hZ,\Z)$, we identify $H^1(\hX,\hZ,\C)$ with $\C^{2\hg+\hn-1}$.
Let $V_\zeta \subset \C^{2\hg+\hn-1}$ be the eigenspace associated with the eigenvalue $\zeta$ of the action of $\tau$ on $H^1(\hX,\hZ,\C)$.
A neighborhood of $(\hX,\homg,\langle\tau\rangle)$ in  $\Skdiff$ is then identified with an open subset of $V_\zeta$.

Note that $V_\zeta$ is defined over $\Q(\zeta)$. Thus if $\zeta \in \Ub_6$, then $\Lambda_3(\zeta):=V_\zeta\cap\Lambda_3$ is a lattice of $V_\zeta$, and if $\zeta\in \Ub_4$ then $\Lambda_4(\zeta):=V_\zeta\cap\Lambda_4$ is a lattice of $V_\zeta$.
There is a unique volume form $d\vol_3^*$ (resp. $d\vol_4^*$) on $V_\zeta$ which is proportional to the Lebesgue measure such that the lattice $\Lambda_3(\zeta)$ (resp. $\Lambda_4(\zeta)$) has covolume $1$.
Recall  that the coordinate changes on $\Skdiff$  consist in changing the basis of $H_1(\hX,\hZ,\Z)$, which correspond to applying some matrices in $\GL(2\hg+\hn-1,\Z)$. Note that such matrices preserve the lattices $\Lambda_3,\Lambda_4$. If $V'_\zeta=A(V_\zeta)$ for some $A\in \GL(2\hg+\hn-1,\Z)$, then $\Lambda'_3(\zeta):=V'_\zeta\cap\Lambda_3=A(\Lambda_3(\zeta))$, and $\Lambda'_4(\zeta):=V'_\zeta\cap\Lambda_4=A(\Lambda_4(\zeta))$.
This implies that $\vol_3^*$ and $\vol_4^*$ give rise to two  well defined volume forms on $\Skdiff$, which will be called the {\em Masur-Veech volume forms}. By the same argument as in Lemma~\ref{lm:MV:vol:count}, we get
\begin{Lemma}\label{lm:MV:asymp:triang} Let $\Skdiffi$ denote the set of $(\hX,\homg,\langle\tau\rangle) \in \Skdiff$ such that $||\homg|| <1$. Then

\begin{eqnarray}
\label{eqn:asymp:n:vol:tria} \lim_{m\ra +\infty}\frac{\Tkdiffm}{(\frac{\sqrt{3}}{4}km)^d} & = & \vol_3^*(\Skdiffi) \\
\label{eq:asymp:n:vol:quad} \lim_{m\ra +\infty}\frac{\Qkdiffm}{(km)^d} & = & \vol_4^*(\Skdiffi),
\end{eqnarray}
where $d=\dim_\C\Skdiff$.
\end{Lemma}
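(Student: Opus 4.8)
The plan is to reduce the asymptotic count to a lattice-point count in the linear space $V_\zeta$ and then invoke the standard cone-counting argument, exactly as in the proof of Lemma~\ref{lm:MV:vol:count}. The key observation is that $\Tkdiffm$ (resp. $\Qkdiffm$) consists precisely of those points of $\Skdiff$ which lie in the lattice $\Lambda_3(\zeta)$ (resp. $\Lambda_4(\zeta)$) in every period chart — this is the content of Proposition~\ref{prop:reg:tiled:surface} together with the definition of $\Tkdiff$ — and which additionally satisfy the area bound $\|\homg\|^2 \leq \frac{\sqrt{3}}{4}km$ (resp. $\|\homg\|^2 \leq km$). So the count $\#\Tkdiffm$ is a count of lattice points of $\Lambda_3(\zeta)$ inside the truncated cone $C^+_{\leq \frac{\sqrt{3}}{4}km}$, where the cone is $\{v \in V_\zeta : h_1(v,v) > 0\}$.

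First I would note that $\Skdiff$ is an orbifold and, since $\Skdiff$ carries a linear structure via period coordinates (with transition matrices in $\GL(2\hg+\hn-1,\Z)$ preserving $\Lambda_3$ and $\Lambda_4$), the Masur-Veech volume forms $d\vol_3^*$, $d\vol_4^*$ are globally well defined, as was already established in \textsection\ref{sec:compare:MV:vol}. Next I would cover $\Skdiffi$ (the area-$<1$ locus) by countably many compact pieces $U_i$, each contained in a single period chart, with disjoint interiors, mimicking the construction in the proof of Lemma~\ref{lm:MV:vol:count}: use paracompactness of the area-one slice and pull back along the $\R_{>0}$-scaling. In each chart, identify $U_i$ with a subset of $V_\zeta \cong \C^d$ and let $\Lambda$ be the appropriate lattice ($\Lambda_3(\zeta)$ or $\Lambda_4(\zeta)$), which by construction has covolume $1$ for $d\vol_3^*$ (resp.\ $d\vol_4^*$). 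The number of points of $\Lambda$ in the scaled cone $C(U_i)$ with $0 < h_1 \leq s$ is then asymptotic, as $s \to \infty$, to $s^d$ times the volume (for $d\vol_j^*$) of the truncated unit cone $C_1(U_i)$; this is the elementary fact that $\#(\Lambda \cap tK) \sim t^{2d}\vol(K)$ for a nice region $K$ as $t \to \infty$, with $t = \sqrt s$. Summing over $i$ and using that the orbifold locus contributes a negligible lower-dimensional piece gives
\[
\lim_{m \to \infty} \frac{\#\Tkdiffm}{\left(\frac{\sqrt 3}{4}km\right)^d} = \vol_3^*(\Skdiffi),
\qquad
\lim_{m \to \infty} \frac{\#\Qkdiffm}{(km)^d} = \vol_4^*(\Skdiffi).
\]

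The main obstacle — though it is really a matter of bookkeeping rather than a genuine difficulty — is handling the orbifold structure of $\Skdiff$ carefully: one must check that passing to the countable cover by charts $U_i$, and the counting of lattice points inside each $C(U_i)$, does not over- or under-count due to orbifold automorphisms, and that the set of orbifold points (a finite union of proper subvarieties) really is negligible in the limit. This is addressed by the same reasoning as in Proposition~\ref{prop:triang:n:lat:pts}: the bad locus has strictly smaller dimension, hence contributes $o(m^d)$ lattice points. A secondary point to verify is the uniformity of the error term in the lattice-point count across the (infinitely many) pieces $U_i$; this is standard given that each $U_i$ is compact with piecewise-$C^1$ boundary and the cone regions are uniformly nice near the area-one slice, so the tail of the sum over $i$ is controlled. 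Since every ingredient — the characterization of $\Tkdiff$, $\Qkdiff$ via integrality of periods, the global definition of the volume forms, and the cone-counting asymptotics — has already been set up, the proof is a direct transcription of the argument of Lemma~\ref{lm:MV:vol:count}, and I would simply refer to it.
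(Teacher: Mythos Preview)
Your proposal is correct and follows essentially the same approach as the paper: the paper simply states that the lemma follows ``by the same argument as in Lemma~\ref{lm:MV:vol:count}'', and your write-up spells out exactly that transcription (lattice points in $\Lambda_3(\zeta)$ or $\Lambda_4(\zeta)$ inside the truncated cone, cover of the area-one slice by compact chart pieces, and the standard $\#(\Lambda\cap tK)\sim t^{2d}\vol(K)$ asymptotic). Your additional remarks on the orbifold locus and uniformity of the error are reasonable elaborations but not required beyond what the reference to Lemma~\ref{lm:MV:vol:count} already covers.
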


The Masur-Veech volume forms induce the volume forms $d\mu_3^*$ and $d\mu_4^*$ on $\pSkdiff$ in the same manner as $d\mu$ is induced from $\vol$.
By definition, we have $\vol_3^*(\Skdiffi)=\mu^*_3(\pSkdiff)$, and $\vol_4^*(\Skdiffi)=\mu^*_4(\pSkdiff)$.
From their construction, there are constants $\lambda_s \in \R^*, \, s\in \{3,4\}$,   such that
$$
\frac{d\vol_s^*}{d\vol}=\frac{d\mu_s^*}{d\mu}=\lambda_s.
$$
The following proposition follows from similar arguments as in Proposition~\ref{prop:compare:meas:arith} (see also \cite[\textsection 5.2]{Ngu19}).
\begin{Proposition}\label{prop:ratio:vol:forms}
Assume that  $\ker\pp\cap V_\zeta=\{0\}$, where $\pp: H^1(\hX,\hZ;\C) \ra  H^1(\hX,\C)$ is the natural projection. Then we have
$$
\frac{d\vol^*}{d\vol}=\frac{d\mu^*}{d\mu} \in \Q.
$$
\end{Proposition}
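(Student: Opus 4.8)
The plan is to transcribe the proof of Proposition~\ref{prop:compare:meas:arith}, replacing $\Q$ by the imaginary quadratic field attached to the relevant lattice, and then to dispose of one extra arithmetic factor. Since it is already known that $\lambda_s$ ($s\in\{3,4\}$) is a \emph{global} constant, it suffices to evaluate $d\vol_s^*/d\vol$ in a single period chart. Fix $\hxx=(\hX,\homg,\langle\tau\rangle)\in\Skdiff$, pick a $\Z$-basis of $\HH_1(\hX,\hZ;\Z)$, and use it to identify $\HH^1(\hX,\hZ;\C)$ with $\C^N$, $N=2\hg+\hn-1$, so that $\HH^1(\hX,\hZ;\Z)$ becomes $\Z^N$ and the lattice $\Lambda_s$ of \textsection\ref{sec:compare:MV:vol} becomes $\fraco^N$, where $\fraco=\Z[e^{2\pi\imath/3}]$ for $s=3$ and $\fraco=\Z[\imath]$ for $s=4$; in both cases $\fraco$ is the ring of integers of an imaginary quadratic field $K$ (namely $\Q(\sqrt{-3})$, resp.\ $\Q(\imath)$), it has class number one, and it is stable under complex conjugation. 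A neighbourhood of $\hxx$ in $\Skdiff$ is an open subset of the eigenspace $V_\zeta\subset\C^N$, which is defined over $K$; the hypothesis $\ker\pp\cap V_\zeta=\{0\}$ guarantees that $h_1$ restricts to a non-degenerate Hermitian form on $V_\zeta$, so that $d\vol$ is the volume form attached to $|\det h_1|$ as in \textsection\ref{sec:push:measure}.

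The next step is to choose a good basis. The lattice $\Lambda:=V_\zeta\cap\fraco^N$ is a finitely generated torsion-free module over the principal ideal domain $\fraco$, hence free, and a rank count shows it has $\fraco$-rank $d:=\dim_\C V_\zeta=\dim_\C\Skdiff$. Fix an $\fraco$-basis $v_1,\dots,v_d$ of $\Lambda$; it is a $\C$-basis of $V_\zeta$, and I let $H=(h_1(v_i,v_j))_{1\le i,j\le d}$ be the Gram matrix of $h_1$ in this basis. Let $dV$ be the standard Euclidean volume form in the $\C$-coordinates dual to $(v_1,\dots,v_d)$. Then, by the formula defining $d\vol$, one has $d\vol=|\det H|\cdot dV$; and, writing $\mu$ for the standard $\Z$-generator of $\fraco$, the real lattice $\Lambda$ equals $\bigoplus_{i=1}^d(\Z v_i\oplus\Z\mu v_i)$, which has $dV$-covolume $(\mathrm{Im}\,\mu)^d$, so $d\vol_s^*=(\mathrm{Im}\,\mu)^{-d}\,dV$. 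Hence
\[
\lambda_s=\frac{d\vol_s^*}{d\vol}=\frac{1}{(\mathrm{Im}\,\mu)^{d}\,|\det H|},
\]
and everything comes down to the rationality of the right-hand side.

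Here the point is that $H$ is governed by the integral intersection pairing: since $h_1(u,v)=\tfrac{\imath}{2}(u\cup\bar v)$, the cup product is $\Z$-valued on $\HH^1(\hX;\Z)$, and $\fraco$ is conjugation-stable, the matrix $M:=(v_i\cup\overline{v_j})$ has entries in $\fraco$ and is skew-Hermitian, with $H=\tfrac{\imath}{2}M$; therefore $t:=\imath^{d}\det M$ is a real number lying in $\R\cap\imath^{d}\fraco$, and $|\det H|=|t|/2^{d}$. So $\lambda_s=2^{d}\bigl((\mathrm{Im}\,\mu)^{d}|t|\bigr)^{-1}$, and it remains to show $(\mathrm{Im}\,\mu)^{d}|t|\in\Q$. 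For $s=4$ one has $\mathrm{Im}\,\mu=1$ and $\imath^{d}\Z[\imath]=\Z[\imath]$, so $t\in\R\cap\Z[\imath]=\Z$ and we are done. For $s=3$ one has $\mathrm{Im}\,\mu=\sqrt3/2$, and the claim $3^{d/2}|t|\in\Q$ follows from a short case analysis on $d\bmod 4$: the real elements of $\imath^{d}\Z[e^{2\pi\imath/3}]$ form $\Z$ when $d$ is even and $\sqrt3\,\Z$ when $d$ is odd, and in the odd case the forced factor $\sqrt3$ exactly cancels the half-integral power of $3$. I expect this last verification --- recognising why the discriminant $-3$ of the Eisenstein integers produces precisely the right power of $\sqrt3$ when $\dim_\C\Skdiff$ is odd --- to be the only subtle step; the rest is a direct transcription of Proposition~\ref{prop:compare:meas:arith}. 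Finally $d\mu^*/d\mu=d\vol^*/d\vol=\lambda_s$ by construction, which completes the argument.
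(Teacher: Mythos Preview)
Your argument is correct and follows the route the paper itself indicates, namely adapting the proof of Proposition~\ref{prop:compare:meas:arith} with $\Q$ replaced by the relevant imaginary quadratic field; your use of an $\fraco$-basis of $V_\zeta\cap\fraco^N$ is a clean variant of the coordinate projection used there, and the explicit verification that $(\mathrm{Im}\,\mu)^d|t|\in\Q$ in the Eisenstein case (where the factor $\sqrt{3}$ must cancel) is exactly the extra arithmetic step the paper leaves implicit. One small point of presentation: the hypothesis $\ker\pp\cap V_\zeta=\{0\}$ alone gives injectivity of $\pp$ on $V_\zeta$, but non-degeneracy of $h_1$ there also uses that $\Skdiff$ is polarized (Proposition~\ref{prop:kdiff:ln:man}); likewise your pairing $v_i\cup\overline{v_j}$ should be read as $\pp(v_i)\cup\overline{\pp(v_j)}$, which still lands in $\fraco$ since $\pp$ is defined over $\Z$.
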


\subsection{Proof of Theorem~\ref{th:asymp:count}}
\begin{proof}
We will only give the proof for $\TT(\kappa)$ as the proof for $\QQ(\kappa)$ is exactly the same.
Combining Proposition~\ref{prop:triang:n:lat:pts}, Lemma~\ref{lm:MV:asymp:triang}, and the fact that $\vol^*_3(\Skdiffi)=\mu^*_3(\pSkdiff)$, we get
$$
\lim_{m\ra +\infty}\frac{\#\TT^{(k)}(\kappa,m)}{(\frac{\sqrt{3}}{4}km)^d} \in \Q\cdot\mu_3^*(\pSkdiff).
$$
Now the hypothesis on $\kappa$ implies that for all $i\in \{1,\dots,n\}$, the order $k_i$ of the $k$-differentials in $\Skdiff$ is not a multiple of $k$. This implies in particular that $k\neq 1$, that is $\Skdiff$ is not a stratum of Abelian differentials. Therefore (see \cite{BCGGM:kdiff, Ngu19})
$$
d=\dim_\C\Skdiff=2g+n-2.
$$
By Proposition~\ref{prop:kdiff:ln:man} we know that $\Skdiff$ is a polarized absolutely rigid linear submanifold of the stratum $\Sabel$.
Therefore, from Proposition~\ref{prop:ratio:vol:forms} we get
$$
\lim_{m\ra +\infty}\frac{\#\TT^{(k)}(\kappa,m)}{(\frac{\sqrt{3}}{4}km)^{2g+n-2}} \in \Q\cdot\mu(\pSkdiff).
$$
By Theorem~\ref{th:vol:rational}, we know that $\mu(\pSkdiff) \in \Q\cdot\pi^{2g+n-2}$. Thus
$$
\lim_{m\ra +\infty}\frac{\#\TT^{(k)}(\kappa,m)}{m^{2g+n-2}} \in \Q\cdot(\sqrt{3}\pi)^{2g+n-2}.
$$
Since
$$
\#\TT(\kappa,m)=\sum_{k\in\{2,3,6\}} \#\TT^{(k)}(\kappa,m)
$$
the theorem follows.
\end{proof}




\begin{thebibliography}{ABC89}

\bibitem{AEM17} A.~Avila, A.~Eskin, and M.~M\"oller:  Symplectic and Isometric $\SL(2,\R)$-invariant subbundles of the Hodge bundle,  {\em J. Reine  Angew. Math.} {\bf 732} (2017), 1--20.

\bibitem{AEZ16} J.S. Athreya, A. Eskin, and A. Zorich: Right-angled billiards and volumes of moduli spaces of quadratic differentials on $\mathbb{CP}^1$. With an appendix by Jon Chaika. {\em Ann. Sci. de l'E.N.S.} (4) 49 (2016), no. 6, 1311--1386.





\bibitem{BCGGM:abel} M.~Bainbridge, D.~Chen, Q.~Gendron, S.~Grushevsky, M.~M\"oller: Compactification of strata of abelian differentials, {\em Duke Math. Journal} {\bf 167} (2018), no.12, 2347--2416.

\bibitem{BCGGM:kdiff} M.~Bainbridge, D.~Chen, Q.~Gendron, S.~Grushevsky, M.~M\"oller: Strata of $k$-differentials, {\em Algebr. Geom. 6} (2019), no.2, 196--233.

\bibitem{BCGGM:multiscale} M.~Bainbridge, D.~Chen, Q.~Gendron, S.~Grushevsky, M.~M\"oller: The moduli space of multi-scale differentials, arxiv:1910.13492.

\bibitem{Bru18} Y.~Brunebarbe: Symmetric differentials and variations of Hodge structures, {\em J. Reine Angew. Math.} {\bf 743} (2018), 133-161.



\bibitem{CMS19} D.~Chen, M. M\"oller, and A. Sauvaget: Masur-Veech volumes and intersection theory: the principal strata of quadratic differentials (with an appendix by Ga\"etan Borot, Alessandro Giacchetto, Danilo Lewanski), {\em preprint}, arxiv:1912.02267.

\bibitem{CMZ19} M.~Costantini, M.~M\"oller, and J.~Zachhuber: The area is a good enough metric, {\em preprint}, arxiv:1910.14151.

\bibitem{Del70} P.~Deligne: \'Equations diff\'erentielles \`a points singuliers r\'eguliers, {\em Lecture notes in Mathematics 163}, Springer-Verlag, 1970.

\bibitem{Deligne:TH2} P.~Deligne: Th\'eorie de Hodge II, {\em Inst. Hautes \'Etudes Sci. Publ. Math.} {\bf 40} (1971), 5-57.

\bibitem{Deligne:TFM} P.~Deligne: Un th\'eor\`eme de finitude pour la monodromie, {\em Discrete groups in geometry and analysis (New Haven, Conn., 1984)}, {\bf Progress in Mathematics 67} (1987), Birkh\"auser Boston, Boston, MA,  1-19.

\bibitem{Engel-I} P.~Engel: Hurwitz Theory of elliptic orbifold I, {\em Geometry \& Topology} {\bf 25} (2021), 229--274. 

\bibitem{Engel-II} P.~Engel: Hurwitz Theory of elliptic orbifold II, {\em preprint}, arxiv:1809.07434.

\bibitem{ES18} P.~Engel and P.~Smillie: {The number of covex tilings of the sphere by triangles, squares, or hexagons}, {\em Geometry \& Topology} {\bf 22} (2018), 2839-2864.





\bibitem{EM18}  A.~Eskin and M.~Mirzakhani: {Invariant and stationary measures for the $\SL(2,\R)$ action on moduli space}, {\em Publ. Math. I.H.\'E.S.}, Vol. 127, no.1 (2018), 95-324.

\bibitem{EMM15} A.~Eskin, M.~Mirzakhani, and A.~Mohammadi:  {Isolation, Equidistribution, and Orbit Closures for the $\SL(2,\R)$ action on Moduli space}, {\em Annals of Math.} {\bf 182} (2015), no.2, 673--721. 


\bibitem{EO01} A.~Eskin and A.~Okounkov: Asymptotics of number of branched coverings of a torus and volumes of moduli spaces of holomorphic differentials, {\em Inventiones Mathematicae}, {\bf 145:1} (2001), 59--104.

\bibitem{EO06} A.~Eskin and A.~Okounkov:  Pillowcases and quasi modular forms, {\em Algebraic Geometry and Number Theory}, {\bf Progress in Mathematics 253} (2006), 1--25.

\bibitem{EV92} H.~Esnault and E.~ Viehweg: {\em Lectures on vanishing theorems. Notes grew out of the DMV-seminar on algebraic geometry, held at Reisensburg, October 13-19, 1991}. Basel: Birkh\"auser Verlag, 1992.

\bibitem{Filip16} S.~Filip: {Splitting mixed Hodge structures over affine invariant manifolds}, {\em Annals of Math.} {\bf 183} (2016), no.2, 681-713. 

\bibitem{Goujard} E.~Goujard: Volumes of strata of moduli spaces of quadratic differentials: getting explicit values, {\em Annales Inst. Fourier} 66 (2006), no.6, 2203--2251.



\bibitem{Hart} R.~Hartshorne: Algebraic Geometry, {\bf Graduate Text in Mathematics 52}, Springer 1977.



\bibitem{Kaw81} Y.~Kawamata: Characterization of Abelian varieties, {\em Compositio Math.} {\bf 43} (1981), no. 2, 253--276.

\bibitem{Kollar87} J.~Koll\'ar: Subadditivity of the Kodaira dimension: fiber of general type, {\em Algebraic Geometry, Sendai, 1985}, {\bf Advanced studies in Pure Math. 10} (1987).



\bibitem{KZ03} M.~Kontsevich and A.~Zorich: {Connected components of the moduli spaces of Abelian differentials}, {\em Invent. Math.} {\bf 153:3} (2003), 631-678.

\bibitem{KN18} V.~Koziarz and D.-M.~Nguyen: Complex hyperbolic volume and intersection of boundary divisors in moduli spaces of pointed genus zero curves, {\em Ann. Sci. de l'\'E.N.S. (4)} 51 (2018), no.6, 1549--1597.







\bibitem{MaTa02} H.~Masur and  S.~Tabachnikov: {Rational  billiards  and   flat  structures},  {\em Handbook  of dynamical  systems, 1A},  {North-Holland,  Amsterdam} (2002), 1015--1089.


\bibitem{McMullen17} C.~McMullen: {The Gauss-Bonnet theorem for cone manifolds and volumes of moduli spaces}, {\em Amer. J. Math.}, {\bf 139} (2017), no.1, 261--291.

\bibitem{MMW17} C.~McMullen, R.~Mukamel, and A.~Wright: Cubic curves and totally geodesic subvarieties of moduli spaces, {\em Annals of Math.} {\bf 185}  (2017), no.3, 957--990.


\bibitem{Moller08} M.~M\"oller: Linear manifolds in the moduli space of one-forms, {\em Duke Math. J.} 144, no.3 (2008), 447-488.

\bibitem{Ngu19} D.-M.~Nguyen: Volume form on moduli spaces of $d$-differentials, {\em  preprint}, arXiv:1902.04830.

\bibitem{Sau18} A.~Sauvaget: Volumes and Siegel Veech constants of $\mathcal{H}(2g-2)$ and Hodge integrals, {\em Geom. and Funct. Anal.}, {\bf 28}, no.6 (2018), 1756--1779.

\bibitem{Sau20} A.~Sauvaget: Volumes of moduli spaces of flat surfaces, {\em preprint},  arXiv:2004.03198.

\bibitem{Sch73} W.~Schmid: Variation of Hodge Structure: the singularities of the period mapping, {\em Invent. Math.} {\bf 22} (1973), 211--319.

\bibitem{SW04} J.~Smillie and B.~Weiss: Minimal sets for flows on moduli spaces, {\em Israel J. Math.}, 142 (2004), 249--260. 

\bibitem{Thurston98} W.P.~Thurston: Shapes of polyhedra, {\em Geom. $\&$ Top. Monogr.}, Vol. 1: The Epstein birthday schrift, 511-549, Geom. Topol. Publ., Coventry (1998).

\bibitem{TT19} D.~Torres-Teigell: Masur-Veech volume of the Gothic locus, {\em Journ. of the London Math. Soc. (2)} {\bf 102}  (2020), 405--436. 

\bibitem{Veech90} W.A.~ Veech: Moduli spaces of quadratic differentials, {\em Journal d'Analyse Math.} {\bf 55} (1990), 117--171.

\bibitem{Wright14} A.~Wright: {The field of definition of affine invariant submanifolds of the moduli space of abelian differentials},  {\em Geometry $\&$ Topology} {\bf 18} (2014), 1323-1341. 


\bibitem{Zorich:survey} A.~Zorich: {Flat surfaces},  {\em Frontiers   in  number  theory,   physics,  and  geometry}, Springer, Berlin (2006), 437--583.

\end{thebibliography}
\end{document}